\documentclass[a4paper,11pt,reqno]{amsart}
\usepackage[letterpaper, margin=1 in]{geometry}
\usepackage{caption} 

\usepackage{amsthm}
\usepackage{bm}

\setcounter{tocdepth}{1}

\pagestyle{headings}

\newtheorem{theorem}{Theorem}[section]

\newtheorem{lemma}[theorem]{Lemma}

\newtheorem{corollary}[theorem]{Corollary}

\newtheorem{proposition}[theorem]{Proposition}

\usepackage{accents}

\theoremstyle{definition}
\newtheorem{definition}[theorem]{Definition}

\newtheorem{remark}[theorem]{Remark}

\usepackage{tikz-cd}

\usepackage{adjustbox}
\linespread{1.3}
\usepackage{float}
\usepackage{amssymb,amsfonts,amsthm,amsmath,amsopn,amstext,amscd,latexsym,color}
\usepackage[all]{xy}
\usepackage{mathtools}

\usepackage{enumerate}

\xyoption{web}

\usepackage[utf8]{inputenc}



\newcommand{\bbA}{{\mathbb A}}

\newcommand{\bbG}{{\mathbb G}}

\usepackage{diagbox}
\usepackage{longtable}
\DeclareMathOperator{\Pic}{Pic}

\usepackage{hyperref}

\usepackage{tikz-cd}

\usepackage{mathtools}

\usepackage{longtable}
\usepackage{verbatim}

\raggedbottom
\usepackage[OT2,T1]{fontenc}
\DeclareSymbolFont{cyrletters}{OT2}{wncyr}{m}{n}
\DeclareMathSymbol{\Sha}{\mathalpha}{cyrletters}{"58}




\newcommand{\Gm}{\mathbb G_m} 
\usepackage{graphics}

\newcommand{\Q}{\mathbb{Q}}
\newcommand{\A}{\mathbb{A}}
\newcommand{\Z}{\mathbb{Z}}

\usepackage{enumitem}

\arraycolsep 1pt


\DeclareMathOperator{\Gal}{Gal}

\DeclareMathOperator{\Cor}{Cor}


\author{\sc André Macedo}

\address{André Macedo\\
Department of Mathematics and Statistics\\ University of Reading\\
Whiteknights, PO Box 220\\
Reading RG6 6AX\\
UK}
   \email{c.a.v.macedo@pgr.reading.ac.uk}
\urladdr{https://sites.google.com/view/andre-macedo}


\title{On the obstruction to the Hasse principle for multinorm equations}
    
\begin{document}

\begin{abstract}
  We investigate local-global principles for multinorm equations over a global field. To this extent, we generalize work of Drakokhrust and Platonov to provide explicit and computable formulae for the obstructions to the Hasse principle and weak approximation for multinorm equations. We illustrate the scope of this technique by extending results of Bayer-Fluckiger--Lee--Parimala \cite{eva}, Demarche--Wei \cite{demarche} and Pollio \cite{pollio}.
\end{abstract}

\maketitle
\tableofcontents
\section{Introduction}

Let $K$ be a global field and let $L=(L_1,\dots,L_n)$ be an $n$-tuple ($n \geq 1$) of finite separable extensions of $K$. In this paper, we study the so-called \emph{multinorm principle} for $L$, which is said to hold if, for any $c \in K^*$, the affine $K$-variety
\begin{equation}\label{eq:Xc}
    X_c : \prod\limits_{i=1}^{n} N_{L_i/K}(\Xi_i)=c
\end{equation}
\noindent satisfies the Hasse principle. In other words, $L$ satisfies the multinorm principle if, for all $c \in K^*$, the existence of points on $X_c$ over every completion of $K$ implies the existence of a $K$-point. 

From a geometric viewpoint, $X_c$ defines a principal homogenous space under the \emph{multinorm one torus} $T$, defined by the exact sequence of $K$-algebraic groups
$$ 1 \to T \to \prod\limits_{i=1}^{n} R_{L_i/K} \Gm \xrightarrow{\prod_i N_{L_i/K}} \Gm \to 1,$$
\noindent where $R_{L_i/K} \Gm$ denotes the Weil restriction of $\Gm$ from $L_i$ to $K$. In this way, the Tate-Shafarevich group $\Sha(T)$ of $T$ is naturally identified with the \textit{obstruction to the multinorm principle} for $L$, defined as
$$\mathfrak{K}(L,K)=K^* \cap \prod\limits_{i=1}^{n} N_{L_i/K}(\A^*_{L_i}) / \prod\limits_{i=1}^{n} N_{L_i/K}(L_i^*),$$
\noindent where $\A^{*}_{L_i}$ denotes the id\`{e}le group of $L_i$ and the multinorm principle holds if and only if $\mathfrak{K}(L,K)=1$.

In the toric case, the Hasse principle for principal homogeneous spaces is strikingly connected with \emph{weak approximation}. This property is said to hold for a torus $T$ over $K$ if the \textit{defect of weak approximation}
$$A(T) =\prod\limits_v T(K_v)/\overline{T(K)}$$
\noindent is trivial (here $\overline{T(K)}$ denotes the closure of $T(K)$ in $\prod_v T(K_v)$ with respect to the product topology). In \cite[\S11.6]{Vosk}, Voskresenski\u{\i} showed the existence of an exact sequence
\begin{equation}\label{eq:Vosk}
    0  \to A(T)  \to \operatorname{H}^1(K,\Pic \overline{X})^{\vee} \to \Sha (T) \to 0,
\end{equation}
\noindent where $X$ denotes a smooth compactification of $T$, $\overline{X}$ the base change of $X$ to an algebraic closure of $K$ and $\phantom{ }^\vee$ stands for the Pontryagin dual of an abelian group. 

Returning to the multinorm principle, when $n=1$ one recovers the classical \emph{Hasse norm principle} (HNP), a topic that has been extensively studied in the literature (see e.g. \cite[\S 6.3]{Platonov} or \cite[\S 1]{MN19} for a survey of known results). If $L/K$ is Galois, then there is an explicit description of the obstruction to the HNP (due to Tate in \cite[p. 198]{C-F}) in terms of the group cohomology of its local and global Galois groups. Drakokhrust later obtained (in \cite{Drak}) a more general description of this obstruction for an arbitrary extension $L/K$ in terms of generalized representation groups.

For $n > 1$, such a description has not yet been obtained. Nonetheless, multiple cases have been analyzed in the literature. For example, if $n=2$ it is known that the multinorm principle holds if 

\begin{enumerate}
    \item\label{hur} $L_1$ or $L_2$ is a cyclic extension of $K$ (\cite[Proposition 3.3]{Hur});
    \item\label{pras} $L_1/K$ is abelian, satisfies the HNP and $L_2$ is linearly disjoint from $L_1$ (\cite[Proposition 4.2]{Prar});
    \item\label{PR_res} the Galois closures of $L_1/K$ and $L_2/K$ are linearly disjoint over $K$ (\cite{PR}).
\end{enumerate}

\noindent Subsequent work of Demarche and Wei provided a generalization of the result in \eqref{PR_res} to $n$ extensions (\cite[Theorems 1 and 6]{demarche}), while also addressing weak approximation for the associated multinorm one torus. In \cite{pollio}, Pollio computed the obstruction to the multinorm principle for a pair of abelian extensions and, in \cite{eva}, Bayer-Fluckiger, Lee and Parimala provided sufficient and necessary conditions for the multinorm principle to hold assuming that one of the extensions $L_i/K$ is cyclic.

In this paper, we provide an explicit description of the obstructions to the multinorm principle and weak approximation for the multinorm one torus of $n$ arbitrary extensions. To achieve this, we generalize the concept (due to Drakokhrust and Platonov in \cite{DP}) of the \emph{first obstruction to the Hasse principle} (see Section \ref{sec:1st_obs}). By then adapting work of Drakokhrust (\cite{Drak}), we obtain our main result (Theorem \ref{thm:main_result}), describing the obstructions to the multinorm principle and weak approximation for the multinorm one torus in terms of generalized representation groups of the relevant local and global Galois groups. The formulas given in Theorem \ref{thm:main_result} are effectively computable and we provide algorithms in GAP \cite{gap} for this effect (see Remark \ref{rem:finite_time2}).

We also apply our techniques to describe the validity of the local-global principles in three concrete examples (see Section \ref{sec:applications}). We start by proving a result inspired by \cite[Theorem 6]{demarche} that compares the birational invariants $\operatorname{H}^1(K,\Pic \overline{X})$ and $\operatorname{H}^1(K,\Pic \overline{Y})$, where $Y$ is a smooth compactification of the norm one torus $S=R^1_{F/K} \bbG_m$ of the extension $F=\bigcap\limits_{i=1}^{n} L_i$. In particular, we show (Theorem \ref{thm:demarch_wei_thm}) that under certain conditions there is an isomorphism $$\operatorname{H}^1(K,\Pic \overline{X}) \xrightarrow{\simeq}\operatorname{H}^1(K,\Pic \overline{Y}).$$

\noindent This results further allows us to compare the defect of weak approximation for $T$ with the defect of weak approximation for $S$ (Corollary \ref{cor:dem1}). 

Under the same assumptions, we also show (Theorem \ref{thm:pollio}) the existence of isomorphisms $$\mathfrak{K}(L,K) \cong \mathfrak{K}(F/K) \textrm{ and } A(T) \cong A(S)$$
\noindent when all the extensions $L_i/K$ are abelian. This theorem generalizes Pollio's result (in \cite{pollio}) on the obstruction to the multinorm principle for a pair of abelian extensions. 

In Section \ref{sec:eva} we complement \cite[Theorem 8.3]{eva} by providing a characterization (Theorem \ref{thm:eva}) of weak approximation for the multinorm one torus of $n$ non-isomorphic cyclic extensions of prime degree $p$. More precisely, we show that both the multinorm principle and weak approximation for $T$ hold if $[L_1 \dots L_n : K]>p^2$. Otherwise, weak approximation holds if and only if the multinorm principle fails (a property that can be detected by precise local conditions, see Remark \ref{rem:eva}). While preparing this paper, we became aware of the recent (and independent) work of Lee \cite{lee}, who extends results of \cite[\S8]{eva} to provide a description of the multinorm principle and weak approximation for the multinorm one torus of $n$ non-isomorphic cyclic extensions (and, in this way, obtains a result more general than Theorem \ref{thm:eva}).

\subsection*{Notation}

Given a global field $K$, we denote its set of places by $\Omega_K$. For $v \in \Omega_K$, we use the notation $K_v$ for the completion of $K$ at $v$ and, if $L$ is a Galois extension of $K$, we denote by $G_v$ a choice of decomposition group of $L/K$ at $v $. 

Given a finite group $G$, a subgroup $H$ of $G$, a $G$-module $A$, an integer $q$ and a prime number $p$, we use the notation:

\begin{longtable}{p{1.5cm} p{14cm}}
$|G|$ & the order of $G$\\
$Z(G)$ & the center of $G$\\
$[H,G]$ & the subgroup of $G$ generated by all commutators $[h,g]$ with $h \in H,g \in G$\\
$\Phi^{G}(H)$ & the subgroup of $H$ generated by all commutators $[h,g]$ with $h \in H \cap g H g^{-1},g \in G$\\
$G^{ab}$ & the abelianization $G/[G,G]$ of $G$\\
$G_p$ & a Sylow $p$-subgroup of $G$\\
$\hat{\operatorname{H}}^q(G,A)$ & the $q$-th Tate cohomology group
\end{longtable}

\noindent We also often use the notation $G'$ for the derived subgroup $[G,G]$ of $G$. If $H$ is a normal subgroup of $G$, we write $H \trianglelefteq G$. For $x,y \in G$ we adopt the convention $[x,y]=x^{-1}y^{-1}xy$ and $x^y=y^{-1}xy$. If $G$ is abelian, we denote its $p$-primary part by $G_{(p)}$.

\subsection*{Acknowledgements}
I would like to thank Prof. Eva Bayer-Fluckiger for a conversation that motivated this work and my supervisor Rachel Newton for useful discussions on the manuscript and for pointing out the recent preprint \cite{lee}. This work was supported by the FCT doctoral scholarship SFRH/BD/117955/2016.

\section{The first obstruction to the multinorm principle}\label{sec:1st_obs}

In this section we define the concept of the first obstruction to the multinorm principle and present several of its properties. We fix a global field $K$, an $n$-tuple $L=(L_1,\dots,L_n)$ of finite separable extensions of $K$ and a Galois extension $N/K$ containing all the fields $L_1,\dots,L_n$. We denote $G=\Gal(N/K)$, $H_i=\Gal(N/L_i)$ for $i=1,\dots,n$ and $H=\langle H_1,\dots,H_n \rangle$, the subgroup of $G$ generated by all the $H_i$. Note that $H=\Gal(N/F)$, where $F=\bigcap\limits_{i=1}^{n} L_i$.

\begin{definition}
We define the \emph{first obstruction to the multinorm principle for $L$ corresponding to $(N,L,K)$} as
$$\mathfrak{F}(N,L,K)=K^* \cap \prod\limits_{i=1}^{n} N_{L_i/K}(\A^{*}_{L_i}) / \prod\limits_{i=1}^{n} N_{L_i/K}(L_i^*)(K^* \cap N_{N/K}(\A^{*}_{N})).$$
\end{definition}

\begin{remark}
This notion generalizes the concept (introduced by Drakokhrust and Platonov in \cite{DP}) of the \emph{first obstruction to the Hasse principle for $L/K$ corresponding to a tower of fields $N/L/K$}, defined as $\mathfrak{F}(N/L/K)=K^* \cap  N_{L/K}(\A^{*}_{L}) /  N_{L/K}(L^*)(K^* \cap N_{N/K}(\A^{*}_{N})).$
\end{remark}

The first obstruction to the multinorm principle has various useful properties -- for example, it is clear from the definition that the total obstruction to the multinorm principle $\mathfrak{K}(L,K)$ surjects onto $\mathfrak{F}(N,L,K)$ with equality if the Hasse norm principle holds for $N/K$. Moreover, this equality also happens if the first obstruction to the Hasse principle for some extension $L_i/K$ coincides with the total obstruction to the Hasse norm principle $\mathfrak{K}(L_{i}/K)=K^* \cap  N_{L_i/K}(\A^{*}_{L_i}) / N_{L_i/K}(L_i^*)$ (called the \emph{knot group} of $L_i/K$):

\begin{lemma}\label{lem:1stobs_equal_multiknot}
If $\mathfrak{K}(L_{i}/K)=\mathfrak{F}(N/L_{i}/K) $ for some $i =1,\dots,n$, then $\mathfrak{K}(L,K)=\mathfrak{F}(N,L,K)$.
\end{lemma}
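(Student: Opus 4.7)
The plan is to exploit the fact that, by construction, both $\mathfrak{K}(L,K)$ and $\mathfrak{F}(N,L,K)$ have the same numerator $K^* \cap \prod_i N_{L_i/K}(\A^*_{L_i})$, while the denominator of $\mathfrak{F}(N,L,K)$ contains that of $\mathfrak{K}(L,K)$. Hence there is a canonical surjection $\mathfrak{K}(L,K) \twoheadrightarrow \mathfrak{F}(N,L,K)$, and to prove equality it suffices to show that the two denominators coincide, i.e., that
\[
    K^* \cap N_{N/K}(\A^*_N) \subseteq \prod_{j=1}^{n} N_{L_j/K}(L_j^*).
\]

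To establish this inclusion I would use the transitivity of the norm: since $N/L_i$ is a tower refining $N/K$, one has $N_{N/K}(\A^*_N) = N_{L_i/K}(N_{N/L_i}(\A^*_N)) \subseteq N_{L_i/K}(\A^*_{L_i})$, and hence
\[
    K^* \cap N_{N/K}(\A^*_N) \subseteq K^* \cap N_{L_i/K}(\A^*_{L_i}).
\]
Now the hypothesis $\mathfrak{K}(L_i/K) = \mathfrak{F}(N/L_i/K)$ is precisely the statement that the natural surjection
\[
    \mathfrak{K}(L_i/K) \twoheadrightarrow \mathfrak{F}(N/L_i/K)
\]
has trivial kernel, which (by the same numerator/denominator comparison as above, now in the single-extension setting) is equivalent to
\[
    K^* \cap N_{N/K}(\A^*_N) \subseteq N_{L_i/K}(L_i^*).
\]
Combining these two inclusions and using $N_{L_i/K}(L_i^*) \subseteq \prod_{j=1}^{n} N_{L_j/K}(L_j^*)$ finishes the argument.

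There is no real obstacle here: the proof is essentially a diagram-chase unpacking the definitions, with the only content being the elementary observation that $N_{N/K}$ factors through $N_{L_i/K}$. I would present it in roughly half a page, first recording the canonical surjection and the reduction to comparing denominators, and then deducing the required inclusion from the assumed equality $\mathfrak{K}(L_i/K) = \mathfrak{F}(N/L_i/K)$ applied to the chosen index $i$.
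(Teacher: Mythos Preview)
Your proposal is correct and follows essentially the same approach as the paper's proof: both observe that the hypothesis $\mathfrak{K}(L_i/K)=\mathfrak{F}(N/L_i/K)$ amounts to the inclusion $K^* \cap N_{N/K}(\A^*_N) \subseteq N_{L_i/K}(L_i^*)$, which forces the two denominators defining $\mathfrak{K}(L,K)$ and $\mathfrak{F}(N,L,K)$ to coincide. The paper's version is simply terser, omitting the explicit mention of transitivity of the norm and the surjection framework that you spell out.
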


\begin{proof}
The assumption translates into $K^* \cap N_{N/K}(\A^*_N) \subset N_{L_{i}/K}(L_{i}^*) $. This implies that \newline ${\prod\limits_{i=1}^{n}N_{L_i/K}(L_i^*)(K^* \cap N_{N/K}(\A^*_N)) = \prod\limits_{i=1}^{n} N_{L_i/K}(L_i^*)}$ and hence $\mathfrak{K}(L,K)=\mathfrak{F}(N,L,K)$.\end{proof}

\begin{corollary}\label{cor:square_free}
If $[L_{i}:K]$ is square-free for some $i = 1,\dots,n$, then $\mathfrak{K}(L,K)=\mathfrak{F}(N,L,K)$. 
\end{corollary}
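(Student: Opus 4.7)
The approach is to apply Lemma~\ref{lem:1stobs_equal_multiknot} with the index $i$ for which $[L_i:K]$ is square-free. To invoke the lemma, I need to verify its hypothesis $\mathfrak{K}(L_i/K) = \mathfrak{F}(N/L_i/K)$, which (as in the proof of Lemma~\ref{lem:1stobs_equal_multiknot}) is equivalent to the inclusion $K^* \cap N_{N/K}(\A_N^*) \subseteq N_{L_i/K}(L_i^*)$. In fact, I plan to establish the stronger assertion that $\mathfrak{K}(L_i/K) = 0$, whence both sides of the desired equality are trivially zero.

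Since $N_{L_i/K}(a) = a^{[L_i:K]}$ for $a \in K^*$, the knot group $\mathfrak{K}(L_i/K)$ is annihilated by $[L_i:K]$ and thus decomposes canonically as the direct sum of its $p$-primary components over primes $p \mid [L_i:K]$. For each such $p$, I would reduce to the prime-degree case as follows: pick a Sylow $p$-subgroup $G_p$ of $G = \Gal(N/K)$, set $M := N^{G_p}$ (so that $[M:K]$ is coprime to $p$), and form the compositum $L_i' := L_i M \subseteq N$. A short Galois-theoretic count, exploiting that under the square-free hypothesis the $p$-part of $[L_i:K]$ equals exactly $p$, gives $[L_i' : M] = p$. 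Bartels' theorem on the Hasse norm principle for extensions of prime degree then yields $\mathfrak{K}(L_i'/M) = 0$. Combining this with a standard restriction-corestriction argument, which uses coprimality of $[M:K]$ with $p$ to establish injectivity of the base-change homomorphism $\mathfrak{K}(L_i/K)_{(p)} \to \mathfrak{K}(L_i'/M)$, forces $\mathfrak{K}(L_i/K)_{(p)} = 0$ for every $p \mid [L_i:K]$, hence $\mathfrak{K}(L_i/K) = 0$, so Lemma~\ref{lem:1stobs_equal_multiknot} yields the corollary.

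The main obstacle will be verifying the injectivity of the base-change map on $p$-primary parts of the knot group. For Galois cohomology this is the classical fact that $\mathrm{res} \circ \mathrm{cor}$ is multiplication by $[G:G_p]$, which acts invertibly on $p$-primary torsion; transferring this reasoning to the idèle-theoretic knot group in the possibly non-Galois setting requires a careful diagram chase matching local and global norm maps under the base change $K \subset M$. A clean way to organize this would be through Drakokhrust's cohomological description of $\mathfrak{K}(L/K)$ (cited as \cite{Drak} in the introduction) as the kernel of a natural map between suitable Tate cohomology groups of $G$, after which the required injectivity becomes a direct instance of the standard restriction-corestriction duality.
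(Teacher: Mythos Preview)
Your reduction to Lemma~\ref{lem:1stobs_equal_multiknot} matches the paper's approach. However, the stronger claim you aim for --- that $\mathfrak{K}(L_i/K)=0$ whenever $[L_i:K]$ is square-free --- is \emph{false}, so the proposed route cannot work.

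A concrete counterexample: take $G=\Gal(N/K)\cong A_4$ and $H_i=\langle (12)(34)\rangle$, so $[L_i:K]=6$ is square-free. Here $G'=V_4\supseteq H_i$, $H_i'=1$, and one checks $\Phi^{G}(H_i)=1$ (conjugates of $H_i$ by elements outside $V_4$ meet $H_i$ trivially, while $V_4$ is abelian). Hence, for any realization with all decomposition groups cyclic, \cite[Theorem~2 and Corollary~1]{DP} give $\mathfrak{K}(L_i/K)=(H_i\cap G')/\Phi^{G}(H_i)\cong\Z/2$.

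Tracing your argument in this example exposes the gap. With $p=2$, $G_p=V_4$ and $M=N^{V_4}$, the base change of the norm one torus $R^1_{L_i/K}\Gm$ to $M$ is governed by the \'etale $M$-algebra $L_i\otimes_K M$, not by the single field $L_iM$. Since $H_i\subset V_4\trianglelefteq G$, the double cosets $H_i\backslash G/V_4$ are the three cosets of $V_4$, and $L_i\otimes_K M$ splits as the product of the three quadratic extensions of $M$ inside $N$. Thus what restriction lands in is $\Sha$ of a \emph{multinorm} torus over $M$, to which Bartels' prime-degree theorem does not apply; indeed this $\Sha$ is nontrivial here, so your claimed injection into $\mathfrak{K}(L_i'/M)=0$ does not exist. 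The restriction--corestriction argument is fine, but its target was misidentified.

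The paper's proof avoids this by invoking \cite[Corollary~1]{DP}, which establishes only the (correct, and sufficient) equality $\mathfrak{K}(L_i/K)=\mathfrak{F}(N/L_i/K)$, not the vanishing of $\mathfrak{K}(L_i/K)$.
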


\begin{proof}
By \cite[Corollary 1]{DP}, if $[L_{i}:K]$ is square-free, then $ \mathfrak{K}(L_{i}/K)=\mathfrak{F}(N/L_{i}/K)$. Now apply Lemma \ref{lem:1stobs_equal_multiknot}.
\end{proof}

More generally, one has the following criterion (extending \cite[Theorem 3]{DP}) for the equality $\mathfrak{K}(L,K)=\mathfrak{F}(N,L,K)$.

\begin{proposition}
\label{prop:1stobs_cor}
Let $k_1,\dots,k_n$ be positive integers. For each $i=1,\dots,n$, choose a collection of $k_i$ subgroups $G_{i,1},\dots,G_{i,k_i}$ of $G$ and $k_i$ subgroups $H_{i,1},\dots,H_{i,k_i}$ such that $H_{i,j} \subset H_i \cap G_{i,j}$ for any $j=1,\dots, k_i$. Set $L_{i,j}=N^{H_{i,j}}$ and $K_{i,j}=N^{G_{i,j}}$ for all $i,j$. Suppose that the Hasse norm principle holds for all the extensions $L_{i,j}/K_{i,j}$ and that the map 
$$\bigoplus\limits_{i=1}^{n}\bigoplus\limits_{j=1}^{k_i} \Cor^{G}_{G_{i,j}}:\bigoplus\limits_{i=1}^{n}\bigoplus\limits_{j=1}^{k_i}  \hat{\operatorname{H}}^{-3}(G_{i,j},\Z)  \to \hat{\operatorname{H}}^{-3}(G,\Z)   $$

\noindent is surjective. Then $ \mathfrak{K}(L,K)=\mathfrak{F}(N,L,K)$. 

\end{proposition}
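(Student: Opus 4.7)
The plan is to reduce the equality $\mathfrak{K}(L,K) = \mathfrak{F}(N,L,K)$ to the containment
$$K^* \cap N_{N/K}(\A^*_N) \subset \prod_{i=1}^n N_{L_i/K}(L_i^*),$$
since this is precisely the statement that the natural surjection $\mathfrak{K}(L,K) \twoheadrightarrow \mathfrak{F}(N,L,K)$ is injective. I would then fix $c \in K^* \cap N_{N/K}(\A^*_N)$ and aim to write it as a product of norms, one from each $L_i$. The argument follows the template of \cite[Theorem~3]{DP}, with the key new feature that the nesting $L_i \subset L_{i,j}$ guaranteed by $H_{i,j} \subset H_i \cap G_{i,j}$ will channel each summand into a specific $N_{L_i/K}(L_i^*)$.

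The first step is the cohomological translation. Using the class formation structure of the id\`ele class group $C_N$, the quotient $K^* \cap N_{N/K}(\A^*_N)/N_{N/K}(N^*)$ is naturally a subquotient of $\hat{\operatorname{H}}^{-1}(G,C_N) \cong \hat{\operatorname{H}}^{-3}(G,\Z)$, and under this identification the corestriction $\Cor^G_{G_{i,j}}$ on $\hat{\operatorname{H}}^{-3}$ corresponds to the norm map $N_{K_{i,j}/K}$ on the analogous quotients relative to $K_{i,j}$. Lifting the class of $c$ to $\alpha \in \hat{\operatorname{H}}^{-3}(G,\Z)$ and invoking the surjectivity hypothesis, I would decompose $\alpha = \sum_{i,j}\Cor^G_{G_{i,j}}(\beta_{i,j})$ with $\beta_{i,j} \in \hat{\operatorname{H}}^{-3}(G_{i,j},\Z)$. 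Pulling this back to id\`ele classes yields
$$c \equiv \prod_{i,j} N_{K_{i,j}/K}(c_{i,j}) \pmod{N_{N/K}(N^*)}$$
for suitable elements $c_{i,j} \in K_{i,j}^* \cap N_{N/K_{i,j}}(\A^*_N)$.

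Next, I would exploit the containments $L_i \subset L_{i,j}$ and $K_{i,j} \subset L_{i,j}$ built into the hypothesis. Factoring $N_{N/K_{i,j}} = N_{L_{i,j}/K_{i,j}} \circ N_{N/L_{i,j}}$ shows that in fact $c_{i,j} \in K_{i,j}^* \cap N_{L_{i,j}/K_{i,j}}(\A^*_{L_{i,j}})$, so the assumed HNP for $L_{i,j}/K_{i,j}$ forces $c_{i,j} \in N_{L_{i,j}/K_{i,j}}(L_{i,j}^*)$. Applying $N_{K_{i,j}/K}$ and using $N_{L_{i,j}/K} = N_{L_i/K} \circ N_{L_{i,j}/L_i}$, each $N_{K_{i,j}/K}(c_{i,j})$ lies in $N_{L_i/K}(L_i^*)$. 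Grouping the resulting product by the index $i$ places each factor in the corresponding $N_{L_i/K}(L_i^*)$, while the residual term lies in $N_{N/K}(N^*) \subset N_{L_1/K}(L_1^*)$; hence $c \in \prod_{i=1}^n N_{L_i/K}(L_i^*)$, as desired.

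The main obstacle is the cohomological translation step: making precise, via the class formation machinery, how a relation in $\hat{\operatorname{H}}^{-3}(G,\Z)$ lifts to an identity among id\`ele classes expressing $c$ as a product of norms from the subfields $K_{i,j}$. This is exactly the content of the single-extension case treated in \cite[Theorem~3]{DP}, so the substantive task is to verify that the same dictionary carries over verbatim to the $n$-tuple setting; once it does, the conclusion follows essentially formally from the nesting $L_i \subset L_{i,j}$, which is the only structural input that is new compared to the $n=1$ situation.
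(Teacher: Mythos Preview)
Your proposal is correct and follows exactly the approach the paper intends: the paper's own proof consists of a single sentence referring to \cite[Theorem~3]{DP}, and what you have written is precisely the adaptation of that argument to the multinorm setting, using the surjection $\hat{\operatorname{H}}^{-3}(G,\Z)\cong\hat{\operatorname{H}}^{-1}(G,C_N)\twoheadrightarrow\mathfrak{K}(N/K)$ and its compatibility with corestriction/norm, together with the nesting $L_i\subset L_{i,j}$ to route each factor into $N_{L_i/K}(L_i^*)$. One small sharpening: the knot group $K^*\cap N_{N/K}(\A_N^*)/N_{N/K}(N^*)$ is in fact a \emph{quotient} of $\hat{\operatorname{H}}^{-1}(G,C_N)$ (as the image of the connecting map in the long exact sequence for $1\to N^*\to\A_N^*\to C_N\to 1$), not merely a subquotient, which is what makes the lifting of $c$ to $\hat{\operatorname{H}}^{-3}(G,\Z)$ possible.
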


\begin{proof}
The statement follows from an argument analogous to the one given by Drakokhrust and Platonov for the Hasse norm principle case, see \cite[Theorem 3]{DP}.
\end{proof}

A further trait of the first obstruction to the multinorm principle $\mathfrak{F}(N,L,K)$ is that it can be expressed in terms of the local and global Galois groups of the towers $N/L_i/K$ (in similar fashion to the first obstruction to the Hasse norm principle). In order to prove this, we mimic the work Drakokhrust and Platonov in \cite[\S 2]{DP}. We will use the following lemma:

\begin{lemma}\cite[Lemma 1]{DP}\label{lem1DP} Let $N/L/K$ be a tower of global fields with $N/K$ Galois. Set $G=\Gal(N/K)$ and $H=\Gal(N/L)$. Then, given a place $v$ of $K$, the set of places $w$ of $L$ above $v$ is in bijection with the set of double cosets in the decomposition $G = \bigcup\limits_{i=1}^{r_v} H x_i G_v$. If $w$ corresponds to $H x_{i} G_v$, then the decomposition group $H_w$ of the extension $N/L$ at ${w}$ equals $H \cap x_{i} G_v x_{i}^{-1}$.
\end{lemma}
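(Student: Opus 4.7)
The plan is to start by fixing once and for all a place $\tilde{v}$ of $N$ lying above $v$, with decomposition group $G_v=\Stab_G(\tilde{v})=\{g\in G:g\tilde{v}=\tilde{v}\}$. Since $N/K$ is Galois and global, the group $G$ acts transitively on the set of places of $N$ above $v$, so the map $gG_v\mapsto g\tilde{v}$ gives a $G$-equivariant bijection between $G/G_v$ and that set of places. This is a standard fact that I would simply quote (e.g.\ from Neukirch's \emph{Algebraic Number Theory}, Chapter II, together with its function-field analogue).

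Next, I would identify the places of $L$ above $v$ with the $H$-orbits on the places of $N$ above $v$. Concretely, using $L=N^H$, two places of $N$ restrict to the same place of $L$ exactly when they are conjugate by an element of $H=\Gal(N/L)$ (again by transitivity, applied now inside $N/L$). Combining this with the previous bijection, the places of $L$ above $v$ correspond to $H$-orbits on $G/G_v$, which by definition are precisely the double cosets $H\backslash G/G_v$. This establishes the first half of the lemma.

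For the statement about decomposition groups, I would proceed as follows. Let $w$ be the place of $L$ corresponding to the double coset $Hx_iG_v$, and pick the distinguished extension $\tilde{w}:=x_i\tilde{v}$ of $w$ to $N$. A direct calculation then gives $\Stab_G(\tilde{w})=x_iG_vx_i^{-1}$. Consequently, the decomposition group of the extension $N/L$ at $\tilde{w}$ is
$$H_{\tilde w}=\Stab_H(\tilde w)=H\cap\Stab_G(\tilde{w})=H\cap x_iG_vx_i^{-1}.$$
Any other lift of $w$ to $N$ differs from $\tilde{w}$ by an element of $H$, so the $H$-conjugacy class of the decomposition group $H_w$ is well-defined and $H\cap x_iG_vx_i^{-1}$ is a canonical representative, which is all the lemma claims.

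The only real content is the transitivity of the relevant Galois actions on places of global fields; beyond that the argument is bookkeeping with cosets, and I do not anticipate any genuine obstacle. If one wished to be completely explicit, the main step that deserves care is checking independence of the choice of $\tilde v$ (different choices conjugate $G_v$, which simply reindexes the double-coset decomposition without changing the statement).
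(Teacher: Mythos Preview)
Your argument is correct and is precisely the standard proof: identify places of $N$ over $v$ with $G/G_v$ via transitivity of the Galois action, pass to $H$-orbits to get places of $L$ over $v$, and read off the decomposition group as a stabilizer intersection. Note that the paper does not supply its own proof of this lemma at all---it simply quotes it as \cite[Lemma 1]{DP}---so there is nothing to compare against beyond observing that your proof is the expected one.
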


In our situation, for any $v \in \Omega_K$ and $i=1,\dots,n$, let $G=\bigcup\limits_{k=1}^{r_{v,i}} H_i x_{i,k} G_v$ be a double coset decomposition. By the above lemma, $H_{i,w}:= H_i \cap x_{i,k} G_v x_{i,k}^{-1}$ is the decomposition group of $N/L_i$ at a place $w$ of $L_i$ above $v$ corresponding to the double coset $H_i x_{i,k} G_v$. Now consider the commutative diagram:
\begin{equation}\label{diag:1stobs_defn}
\xymatrix{
\bigoplus\limits_{i=1}^{n} {H}_i^{\textrm{ab}}  \ar[r]^{{\psi}_1} & {G}^{\textrm{ab}}\\
\bigoplus\limits_{i=1}^{n}(\bigoplus\limits_{v \in \Omega_K} ( \bigoplus\limits_{w|v} {{H}_{i,w}^{\textrm{ab}} }) ) \ar[r]^{\ \ \ \ \ \ \ {\psi}_2} \ar[u]^{{\varphi}_1 }&\bigoplus\limits_{v \in \Omega_K}{{G}_v^{\textrm{ab}} }\ar[u]_{\varphi_2}
}
\end{equation}

\noindent Here the superscript$\phantom{a}^{\textrm{ab}}$ above a group denotes its abelianization and the inside sum over $w|v$ runs over all the places $w$ of $L_i$ above $v$. Additionally, the maps $\varphi_1,\psi_1$ and $\varphi_2$ are induced by the inclusions $H_{i,w} \hookrightarrow H_i, H_i \hookrightarrow G$ and $G_v \hookrightarrow G$, respectively, while $\psi_2$ is obtained from the product of all conjugation maps $H_{i,w}^{ab} \to G_v^{ab}$ sending $h_{i,k} [H_{i,w},H_{i,w}]$ to $x_{i,k}^{-1} h_{i,k} x_{i,k} [G_v,G_v]$. We denote 
by ${\psi}_2^{v}$ (respectively, ${\psi}_2^{nr}$) the restriction of the map ${\psi}_2$ to the subgroup $\bigoplus\limits_{i=1}^{n} ( \bigoplus\limits_{w|v} {{H}_{i,w}^{\textrm{ab}} }) $ (respectively, $\bigoplus\limits_{i=1}^{n}(\bigoplus\limits_{\substack{v \in \Omega_K \\ v \text{ unramified}}} ( \bigoplus\limits_{w|v} {{H}_{i,w}^{\textrm{ab}} }) )$). With this notation set, we can now establish the main result of this section (generalizing \cite[Theorem 1]{DP}):

\begin{theorem}\label{thm:thm1DP_gen}
In the notation of diagram \eqref{diag:1stobs_defn}, we have
$$\mathfrak{F}(N,L,K) \cong \ker\psi_1/\varphi_1(\ker\psi_2).$$
\end{theorem}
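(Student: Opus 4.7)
The plan is to adapt the class field theoretic argument used by Drakokhrust--Platonov in \cite[Theorem 1]{DP} for the Hasse norm principle to the multinorm setting. The essential inputs are global and local Artin reciprocity for the towers $N/L_i/K$, together with the functoriality of the reciprocity map with respect to norms and to inclusions of Galois subgroups. Global class field theory provides surjections $\rho_i\colon \A^{*}_{L_i} \twoheadrightarrow H_i^{\textrm{ab}}$ with kernel $L_i^*\cdot N_{N/L_i}(\A^{*}_{N})$ and $\rho_K\colon \A^{*}_{K} \twoheadrightarrow G^{\textrm{ab}}$ with kernel $K^*\cdot N_{N/K}(\A^{*}_{N})$, each factoring through the corresponding direct sum of local reciprocity maps compatibly with $\varphi_1$, $\psi_1$, $\varphi_2$, and $\psi_2$ in diagram \eqref{diag:1stobs_defn}.

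I would define a candidate homomorphism $\theta\colon K^*\cap\prod_{i=1}^{n} N_{L_i/K}(\A^{*}_{L_i}) \to \ker\psi_1$ by choosing any decomposition $c=\prod_i N_{L_i/K}(a_i)$ and setting $\theta(c)=(\rho_i(a_i))_i$; the image lies in $\ker\psi_1$ since $\psi_1(\theta(c))=\rho_K(c)=0$, as $c\in K^*$ is a principal idele. To show $\theta$ descends to an isomorphism $\bar\theta\colon \mathfrak{F}(N,L,K) \xrightarrow{\simeq} \ker\psi_1/\varphi_1(\ker\psi_2)$, I would proceed in four steps. First, \emph{well-definedness modulo the choice of decomposition}: two lifts of $c$ differ by a tuple $(b_i)$ with $\prod_i N_{L_i/K}(b_i)=1$; lifting each local component $b_{i,w}$ via local reciprocity produces an element $\beta \in \bigoplus_{i,v,w|v} H_{i,w}^{\textrm{ab}}$ satisfying $\psi_2(\beta)=0$ (because $\prod_{i,w|v} N_{L_{i,w}/K_v}(b_{i,w})=1$ at every $v$) and $\varphi_1(\beta)=(\rho_i(b_i))_i$. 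Second, \emph{vanishing on the denominator}: for $\ell_i \in L_i^*$ one has $\rho_i(\ell_i)=0$, and for $N_{N/K}(d)\in K^*\cap N_{N/K}(\A^{*}_{N})$ one uses $N_{N/K}(d)=N_{L_1/K}(N_{N/L_1}(d))$ together with $N_{N/L_1}(d)\in\ker\rho_1$. Third, \emph{surjectivity}: given $x\in\ker\psi_1$, lift each $x_i$ via $\rho_i$ to $a_i\in\A^{*}_{L_i}$, form $c=\prod_i N_{L_i/K}(a_i)$, note $\rho_K(c)=\psi_1(x)=0$ so $c=c'N_{N/K}(d)$ with $c'\in K^*$, and absorb $N_{N/K}(d)$ into the first factor. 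Fourth, \emph{injectivity}: if $\bar\theta(c)=\varphi_1(\beta)$ with $\beta\in\ker\psi_2$, realise $\beta$ by ideles $b_i$ through local reciprocity, so that $\psi_2(\beta)=0$ forces $\prod_i N_{L_i/K}(b_i)\in N_{N/K}(\A^{*}_{N})$, while $\rho_i(a_ib_i^{-1})=0$ gives $a_ib_i^{-1}\in L_i^*\cdot N_{N/L_i}(\A^{*}_{N})$, exhibiting $c$ in the denominator.

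The main technical obstacle, and the point where the $n=1$ proof of \cite[Theorem 1]{DP} requires genuine modification, is coordinating the many possible decompositions of a given $c$: no individual factor $a_i$ is determined by $c$, so both the well-definedness and the injectivity steps must bundle the $n$-tuple of local ambiguities into a single element $\beta\in\ker\psi_2$ living in the lower-left corner of diagram \eqref{diag:1stobs_defn}. Once this bookkeeping is in place, the remaining verifications reduce to diagram chases in the commutative square built from the compatibility of local and global Artin reciprocity with norms and with inclusions of decomposition subgroups.
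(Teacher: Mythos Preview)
Your proposal is correct and follows essentially the same route as the paper: both arguments pass through the local and global Artin reciprocity isomorphisms for the towers $N/L_i/K$ to translate diagram~\eqref{diag:1stobs_defn} into the id\`ele-theoretic diagram with entries $\A^*_{L_i}/L_i^*N_{N/L_i}(\A^*_N)$, $\A^*_K/K^*N_{N/K}(\A^*_N)$, etc., and then read off the identification of $\ker\psi_1/\varphi_1(\ker\psi_2)$ with $\mathfrak{F}(N,L,K)$. The only cosmetic differences are that the paper frames the translation in the language of Tate cohomology ($\hat{\operatorname{H}}^{-2}\cong\hat{\operatorname{H}}^{0}$) before writing down the explicit map, and constructs the isomorphism in the opposite direction (from $\ker\psi_1/\varphi_1(\ker\psi_2)$ to $\mathfrak{F}(N,L,K)$), leaving the routine verification to the reader; your four-step check unpacks exactly what that verification entails.
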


\begin{proof}
Diagram \eqref{diag:1stobs_defn} can be written as
\begin{equation}\label{diag:1stobs_defn2}
\xymatrix{
\bigoplus\limits_{i=1}^{n} \hat{\operatorname{H}}^{-2}({H}_i,\Z)  \ar[r]^{{\psi}_1} & \hat{\operatorname{H}}^{-2}(G,\Z)\\
\bigoplus\limits_{i=1}^{n}(\bigoplus\limits_{v \in \Omega_K} ( \bigoplus\limits_{w|v} { \hat{\operatorname{H}}^{-2}(H_{i,w},\Z) }) ) \ar[r]^{\ \ \ \ \ \ \ {\psi}_2} \ar[u]^{{\varphi}_1}&\bigoplus\limits_{v \in \Omega_K}{ \hat{\operatorname{H}}^{-2}(G_v,\Z) }\ar[u]_{\varphi_2}
}
\end{equation}

\noindent By the local (respectively, global) Artin isomorphism, we have $\hat{\operatorname{H}}^{-2}(H_{i,w},\Z) \cong \hat{\operatorname{H}}^{0}(H_{i,w},N_w^*)$ and $\hat{\operatorname{H}}^{-2}(G_v,\Z) \cong \hat{\operatorname{H}}^{0}(G_v,N_v^*)$ (respectively, $\hat{\operatorname{H}}^{-2}({H}_i,\Z) \cong \hat{\operatorname{H}}^{0}(H_i,C_N)$ and $\hat{\operatorname{H}}^{-2}(G,\Z) \cong \hat{\operatorname{H}}^{0}(G,C_N)$, where $C_N$ is the idèle class group of $N/K$). Additionally, by \cite[Proposition 7.3(b)]{C-F} there are identifications $\bigoplus\limits_{v \in \Omega_K}( \bigoplus\limits_{w|v} \hat{\operatorname{H}}^{0}(H_{i,w},N_w^*)) \cong  \hat{\operatorname{H}}^{0}(H_{i},\A^{*}_N)$ and $\bigoplus\limits_{v \in \Omega_K} \hat{\operatorname{H}}^{0}(G_v,N_v^*) \cong  \hat{\operatorname{H}}^{0}(G,\A_{N}^{*})$. In this way, an argument analogous to the one given in \cite[\S 2]{DP} for the $n=1$ case shows that diagram \eqref{diag:1stobs_defn2} induces the commutative diagram
\begin{equation}\label{diag:1stobs_defn3}
\xymatrix{
\bigoplus\limits_{i=1}^{n} \hat{\operatorname{H}}^{0}({H}_i,C_N)  \ar[r]^{\ {\psi}_1} & \hat{\operatorname{H}}^{0}(G,C_N)\\
\bigoplus\limits_{i=1}^{n} \hat{\operatorname{H}}^{0}(H_i,\bbA^*_{N})  \ar[r]^{\  {\psi}_2} \ar[u]^{{\varphi}_1}& \hat{\operatorname{H}}^{0}(G,\bbA^*_{N}) \ar[u]_{\varphi_2}
}
\end{equation}

\noindent where $\varphi_{1},\varphi_2$ are the natural projections and $\psi_{1},\psi_2$ are induced by the product of the norm maps $N_{L_i/K}$. Using the definition of the cohomology group $\hat{\operatorname{H}}^0$, this diagram is equal to
\begin{equation}\label{diag:1stobs_defn4}
\xymatrix{
\bigoplus\limits_{i=1}^{n} \frac{\A_{L_i}^{*}}{L_i^{*} N_{N/L_i}(\A^{*}_{N})}  \ar[r]^{{\psi}_1} & \frac{\A_{K}^{*}}{K^{*} N_{N/K}(\A^{*}_{N})}\\
 \bigoplus\limits_{i=1}^{n} \frac{\A_{L_i}^{*}}{ N_{N/L_i}(\A^{*}_{N})} \ar[r]^{\ \ \  {\psi}_2} \ar[u]^{{\varphi}_1}&{ \frac{\A_{K}^{*}}{ N_{N/K}(\A^{*}_{N})} }\ar[u]_{\varphi_2}
}
\end{equation}

\noindent From diagram \eqref{diag:1stobs_defn4}, it is clear that
$$\ker \psi_1 = \{ (x_i L_i^*  N_{N/L_i}(\A^{*}_{N}))_{i=1}^{n} | \prod\limits_{i=1}^{n} N_{L_i/K}(x_i) \in K^* N_{N/K}(\A^{*}_N)\}$$ 
and
$$\varphi_1(\ker \psi_2) = \{ (x_i L_i^* N_{N/L_i}(\A^{*}_{N}))_{i=1}^{n} | \prod\limits_{i=1}^{n} N_{L_i/K}(x_i) \in N_{N/K}(\A^{*}_N)\}.$$ 

\noindent Now define
\begin{align*}
  f \colon \ker \psi_1 / \varphi_1(\ker \psi_2) &\longrightarrow \mathfrak{F}(N,L,K) \\
  (x_i L_i^* N_{N/L_i}(\A^{*}_{N}))_{i=1}^{n} &\longmapsto x \prod\limits_{i=1}^{n} N_{L_i/K}({L_i}^{*}) (K^* \cap N_{N/K}(\A^{*}_{N}) )
\end{align*}

\noindent where $x$ is any element of $ K^* \cap \prod\limits_{i=1}^{n} N_{L_i/K}(\A^{*}_{L_i})$ such that $\prod\limits_{i=1}^{n} N_{L_i/K}(x_i) \in x N_{N/K}(\A^{*}_N)$. It is straightforward to check that $f$ is well defined and an isomorphism.\end{proof}

\begin{remark}\label{rem:finite_time}
Given the knowledge of the local and global Galois groups of the towers $N/L_i/K$, the first obstruction to the multinorm principle can be computed in finite time by employing Theorem \ref{thm:thm1DP_gen}. First, it is clear that the computation of the groups $\ker \psi_1$ and $\varphi_1(\ker \psi_2^{v})$ for the ramified places $v$ of $N/K$ is finite. Moreover, from the definition of the maps in diagram \eqref{diag:1stobs_defn}, it is clear that if $v_1,v_2 \in \Omega_K$ are such that $G_{v_1}=G_{v_2}$, then $\varphi_1(\ker \psi_2^{v_1}) = \varphi_1(\ker \psi_2^{v_2})$. This shows that the computation of $\varphi_1(\ker \psi_2^{nr})$ is also finite. On this account, we designed a function in GAP \cite{gap} (whose code is available in \cite{macedo_code}) that takes as input the Galois groups $G, H_i$ and the decomposition groups $G_v$ at the ramified places of $N/K$ and outputs the group $\mathfrak{F}(N,L,K)$.\end{remark}

We conclude this section by providing two results that further reduce the amount of calculations necessary to compute $\mathfrak{F}(N,L,K)$ via Theorem \ref{thm:thm1DP_gen}. These are inspired by the same properties of the first obstruction to the Hasse norm principle (in \cite[\S 3]{DP}) and proved in the same way.

\begin{lemma}{\cite[Lemma 2]{DP}}
Let $v_1,v_2 \in \Omega_K$ be such that $G_{v_2} \subset G_{v_1}$. Then, in the notation of diagram \eqref{diag:1stobs_defn}, we have $$\varphi_1(\ker \psi_2^{v_2}) \subset \varphi_1(\ker \psi_2^{v_1}).$$
\end{lemma}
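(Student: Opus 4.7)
The plan is to exploit the refinement of double coset decompositions induced by the containment $G_{v_2}\subset G_{v_1}$, and to transport an element of $\ker\psi_2^{v_2}$ to an element of $\ker\psi_2^{v_1}$ with the same image under $\varphi_1$. First, for each $i$, fix a double coset decomposition $G=\bigsqcup_{j} H_i x_{i,j}^{v_1} G_{v_1}$ and refine each block via $H_i x_{i,j}^{v_1} G_{v_1}=\bigsqcup_{k} H_i x_{i,j}^{v_1} g_{j,k} G_{v_2}$, where $g_{j,k}\in G_{v_1}$ runs over a suitable set of representatives. Setting $x_{i,j,k}^{v_2}:=x_{i,j}^{v_1}g_{j,k}$ gives a double coset decomposition $G=\bigsqcup_{j,k} H_i x_{i,j,k}^{v_2} G_{v_2}$. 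Since $g_{j,k}G_{v_2}g_{j,k}^{-1}\subset G_{v_1}$, we obtain the inclusion of decomposition groups $H_{i,w_{j,k}^{v_2}}=H_i\cap x_{i,j,k}^{v_2}G_{v_2}(x_{i,j,k}^{v_2})^{-1}\subset H_i\cap x_{i,j}^{v_1}G_{v_1}(x_{i,j}^{v_1})^{-1}=H_{i,w_{j}^{v_1}}$.

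Given $\alpha=(a_{i,j,k})\in\ker\psi_2^{v_2}$ with $a_{i,j,k}\in H_{i,w_{j,k}^{v_2}}^{\mathrm{ab}}$, define $\beta=(b_{i,j})$ by letting $b_{i,j}\in H_{i,w_j^{v_1}}^{\mathrm{ab}}$ be the sum of the images of the $a_{i,j,k}$ under the inclusion-induced maps $H_{i,w_{j,k}^{v_2}}^{\mathrm{ab}}\to H_{i,w_{j}^{v_1}}^{\mathrm{ab}}$. By construction, $\varphi_1(\beta)$ and $\varphi_1(\alpha)$ are both equal to the tuple $\bigl(\sum_{j,k}\iota(a_{i,j,k})\bigr)_i\in\bigoplus_i H_i^{\mathrm{ab}}$, where $\iota$ denotes the inclusion into $H_i$.

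It remains to verify $\beta\in\ker\psi_2^{v_1}$. Unpacking the definition of $\psi_2^{v_1}$, we compute
\[
\psi_2^{v_1}(\beta)\;=\;\sum_{i,j,k}(x_{i,j}^{v_1})^{-1}a_{i,j,k}\,x_{i,j}^{v_1}\quad\text{in }G_{v_1}^{\mathrm{ab}}.
\]
The key point is that each element $(x_{i,j}^{v_1})^{-1}a_{i,j,k}x_{i,j}^{v_1}$ actually lies in $G_{v_1}$: it belongs to $(x_{i,j}^{v_1})^{-1}H_{i,w_{j,k}^{v_2}}x_{i,j}^{v_1}\subset g_{j,k}G_{v_2}g_{j,k}^{-1}\subset G_{v_1}$. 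Consequently, conjugation by $g_{j,k}\in G_{v_1}$ is trivial in $G_{v_1}^{\mathrm{ab}}$, so
\[
\sum_{i,j,k}(x_{i,j}^{v_1})^{-1}a_{i,j,k}x_{i,j}^{v_1}\;=\;\sum_{i,j,k}g_{j,k}^{-1}(x_{i,j}^{v_1})^{-1}a_{i,j,k}x_{i,j}^{v_1}g_{j,k}\;=\;\sum_{i,j,k}(x_{i,j,k}^{v_2})^{-1}a_{i,j,k}x_{i,j,k}^{v_2}
\]
in $G_{v_1}^{\mathrm{ab}}$. The right-hand side is the image of $\psi_2^{v_2}(\alpha)=0$ under the natural map $G_{v_2}^{\mathrm{ab}}\to G_{v_1}^{\mathrm{ab}}$, hence vanishes. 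This yields $\beta\in\ker\psi_2^{v_1}$ and the inclusion $\varphi_1(\ker\psi_2^{v_2})\subset\varphi_1(\ker\psi_2^{v_1})$.

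The main obstacle is purely notational: one must keep careful track of two parallel double coset decompositions and the fact that conjugation is only trivial after passing to $G_{v_1}^{\mathrm{ab}}$ (not in $G_{v_2}^{\mathrm{ab}}$). The verification that $(x_{i,j}^{v_1})^{-1}a_{i,j,k}x_{i,j}^{v_1}$ lies in $G_{v_1}$—which is what legitimizes removing the $g_{j,k}$ conjugation in the abelianization—is the substantive step; everything else is bookkeeping, exactly paralleling the $n=1$ argument in \cite[Lemma 2]{DP}.
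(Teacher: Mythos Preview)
Your argument is correct and is precisely the natural extension to $n$ extensions of the proof of \cite[Lemma 2]{DP}; the paper itself does not give a separate proof but simply notes that the result is ``proved in the same way'' as in \cite[\S 3]{DP}, which is exactly what you have carried out. One minor notational point: the refinement representatives you call $g_{j,k}$ in fact depend on $i$ as well (since the double coset structure varies with $H_i$), but this does not affect the argument.
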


\begin{lemma}{\cite[Lemma 3]{DP}}
Let $v_1,v_2 \in \Omega_K$ be such that $G_{v_1} M = G_{v_2} M$ for some subgroup $M \subset Z(G) \cap \bigcap\limits_{i=1}^{n} H_i$. Then, in the notation of diagram \eqref{diag:1stobs_defn}, we have $$\varphi_1(\ker \psi_2^{v_1})= \varphi_1(\ker \psi_2^{v_2}).$$
\end{lemma}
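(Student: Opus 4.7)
The plan is to adapt the proof of \cite[Lemma 3]{DP} (which handles the case $n=1$) to the $n$-tuple setting, by carrying out its argument componentwise across the $n$ extensions while exploiting that $M$ is contained in every $H_i$. By symmetry of the hypothesis $G_{v_1}M = G_{v_2}M$ in $v_1$ and $v_2$, it suffices to prove the inclusion $\varphi_1(\ker \psi_2^{v_1}) \subseteq \varphi_1(\ker \psi_2^{v_2})$.

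The first step is to record three structural consequences of the hypothesis $M \subseteq Z(G) \cap \bigcap_i H_i$. Since $M$ is central and contained in each $H_i$, one has $H_i x G_{v_j} = H_i x G_{v_j} M$ for every $x \in G$, and combined with $G_{v_1}M = G_{v_2}M$ this lets us choose a common set of double coset representatives $\{x_{i,k}\}$ in the decompositions $G = \bigsqcup_k H_i x_{i,k} G_{v_j}$ for $j = 1, 2$. Writing $H_{i,w_j}^{(k)} := H_i \cap x_{i,k} G_{v_j} x_{i,k}^{-1}$, the same centrality yields $H_{i,w_1}^{(k)} M = H_{i,w_2}^{(k)} M$ and $[G_{v_1}, G_{v_1}] = [G_{v_2}, G_{v_2}]$ (since $[G_{v_j} M, G_{v_j} M] = [G_{v_j}, G_{v_j}]$).

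The second step is to transfer a kernel element from $v_1$ to $v_2$. Given lifts $h_{i,k} \in H_{i,w_1}^{(k)}$ representing a tuple in $\ker \psi_2^{v_1}$, I would decompose $h_{i,k} = h'_{i,k} m_{i,k}$ with $h'_{i,k} \in H_{i,w_2}^{(k)}$ and $m_{i,k} \in M$. Centrality of $M$ then gives
\[\prod_{i,k} x_{i,k}^{-1} h'_{i,k} x_{i,k} \equiv \Bigl(\prod_{i,k} m_{i,k}\Bigr)^{-1} \pmod{[G_{v_2}, G_{v_2}]},\]
forcing $\prod_{i,k} m_{i,k} \in M \cap G_{v_2}$. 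Choosing $x_{i,1} = 1$ for each $i$, this central product lies in $H_{1,w_2}^{(1)}$ and can be absorbed into $h'_{1,1}$, producing a tuple $(h''_{i,k}) \in \ker \psi_2^{v_2}$.

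The main obstacle is the final comparison of $\varphi_1$-images in $\bigoplus_i H_i^{\mathrm{ab}}$: the discrepancy $\varphi_1((h_{i,k})) - \varphi_1((h''_{i,k}))$ lies in $\bigoplus_i M_i$ (where $M_i$ denotes the image of $M$ in $H_i^{\mathrm{ab}}$), with components $\sum_k \bar{m}_{i,k}$ for $i \neq 1$ and $\sum_k \bar{m}_{1,k} - \overline{\prod_{j,l} m_{j,l}}$ for $i = 1$. One verifies that these components multiply to the identity in $M$, so the discrepancy belongs to the kernel of the multiplication map $\bigoplus_i M_i \to M$. To realize this discrepancy as a $\varphi_1$-image from $\ker \psi_2^{v_2}$, I would construct auxiliary kernel elements supported on $M \cap G_{v_2}$: for any $m \in M \cap G_{v_2}$, the tuple placing $m$ in one slot and $m^{-1}$ in another lies in $\ker \psi_2^{v_2}$ by centrality, producing a pairwise correction across two components. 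The delicate bookkeeping needed to express the full discrepancy as a sum of such corrections---despite individual coordinate terms possibly lying outside $M \cap G_{v_2}$---is exactly the point where the centrality of $M$ and the inclusion $M \subseteq \bigcap_i H_i$ enter, and the argument proceeds as in \cite[Lemma 3]{DP}.
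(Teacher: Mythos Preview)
The paper gives no proof beyond the blanket assertion that the two lemmas at the end of Section~\ref{sec:1st_obs} are ``proved in the same way'' as in \cite[\S 3]{DP}. Your outline is the natural adaptation of that argument, and the first three steps (common double coset representatives, $H_{i,w_1}^{(k)}M=H_{i,w_2}^{(k)}M$, $[G_{v_1},G_{v_1}]=[G_{v_2},G_{v_2}]$, and the decomposition $h_{i,k}=h'_{i,k}m_{i,k}$ with absorption of the total product $m=\prod_{i,k}m_{i,k}\in M\cap G_{v_2}$ into the $(1,1)$ slot) are all correct.

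The gap is precisely where you locate it, and it is not a matter of bookkeeping. After absorption, the discrepancy between $\varphi_1((h_{i,k}))$ and $\varphi_1((h''_{i,k}))$ is a tuple $(\nu_1,\dots,\nu_n)$ with $\nu_i\in M$ and $\prod_i\nu_i=1$; you need this to lie in $\varphi_1(\ker\psi_2^{v_2})$. In the $n=1$ case of \cite[Lemma~3]{DP} this step is \emph{vacuous}: there is a single component, and absorbing $m$ already makes the two $\varphi_1$-images coincide on the nose. So the appeal to \cite{DP} does not cover the multinorm step you need. Worse, the required inclusion is false in general: your ``pairwise correction'' trick only produces tuples with entries in $M\cap G_{v_2}$, whereas the individual $\nu_i$ need not lie in $G_{v_2}$.

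In fact the lemma itself fails for $n\geq 2$. Take $G=C_2^4=\langle a,b,c,d\rangle$, $H_1=\langle a,b,c\rangle$, $H_2=\langle a,b,d\rangle$, $M=\langle b\rangle\subset Z(G)\cap H_1\cap H_2$, and $G_{v_1}=\langle a\rangle$, $G_{v_2}=\langle ab\rangle$ (both occur as decomposition groups at unramified places by Chebotarev). Then $G_{v_1}M=\langle a,b\rangle=G_{v_2}M$, but a direct computation (two double cosets for each $H_i$, all local groups equal to $G_{v_j}$ since $G$ is abelian) gives
\[
\varphi_1(\ker\psi_2^{v_1})=\{(1,1),(a,a)\},\qquad \varphi_1(\ker\psi_2^{v_2})=\{(1,1),(ab,ab)\}
\]
inside $H_1\times H_2$, and these are distinct subgroups. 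So the statement you are trying to prove is not correct as written, and the step you flag as ``delicate'' is exactly where any attempted proof must break down.
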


\section{Generalized representation groups}\label{sec:gen_gps}

In this section we prove that the obstruction to the multinorm principle for $L$ can always be expressed in terms of the arithmetic of the extensions $L_i/K$ by using generalized representation groups (see Definition \ref{gen_rep_gp_defn} below) of $G=\Gal(N/K)$. Once again, many of the results in this section are inspired by and generalize Drakokhrust's work \cite{Drak} on the Hasse norm principle.

\begin{definition}\label{gen_rep_gp_defn}
Let $G$ be a finite group. A finite group $\overline{G}$ is called a \emph{generalized representation group} of $G$ if there exists a central extension 
$$1 \to M \to \overline{G} \xrightarrow[]{\lambda} G \to 1$$
\noindent such that $M \cap [\overline{G},\overline{G}] \cong \hat{\operatorname{H}}^{-3}(G,\Z)$. We call $M$ the base normal subgroup of $\overline{G}$. If in addition $M \subset [\overline{G},\overline{G}]$, we say that $\overline{G}$ is a \textit{Schur covering group of $G$}.

\end{definition}

\begin{proposition}\label{prop:1st=knot}
There exists a Galois extension $P/K$ containing $N$ and such that $$ \mathfrak{F}(P,L,K)=\mathfrak{K}(L,K).$$
\noindent Furthermore, this extension has the property that $\overline{G}=\Gal(P/K)$ is a generalized representation group of $G$ with base normal subgroup $\overline{M}=\Gal(P/N)$ and if $\overline{\lambda}:\overline{G} \to G$ is the associated projection map, we have $\Gal(P/L_i)=\overline{\lambda}^{-1}(H_i)$.
\end{proposition}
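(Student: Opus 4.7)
The plan is to adapt Drakokhrust's construction (in \cite{Drak}, the $n=1$ case of this statement) to the multinorm setting.

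First, I would pick a Schur covering group of $G$: a central extension $1 \to \overline{M} \to \overline{G} \xrightarrow{\overline{\lambda}} G \to 1$ with $\overline{M} \cong \hat{\operatorname{H}}^{-3}(G,\mathbb{Z})$ contained in $[\overline{G},\overline{G}]$. This is in particular a generalized representation group of $G$ in the sense of Definition \ref{gen_rep_gp_defn}, with the additional property that $\hat{\operatorname{H}}^{-3}(\overline{G},\mathbb{Z}) = 0$.

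Second, I would realize $\overline{G}$ as the Galois group of a finite Galois extension $P/K$ containing $N$, in such a way that $\overline{\lambda}$ coincides with the restriction map $\Gal(P/K) \twoheadrightarrow \Gal(N/K)$. This amounts to a central embedding problem whose obstruction lies in $\operatorname{H}^2(G_K,\overline{M})$. Following the argument in \cite{Drak}, I would show this obstruction can be killed by adjusting $\overline{G}$ if necessary -- enlarging $\overline{M}$ so that $\overline{G}$ remains a generalized representation group (with trivial Schur multiplier) while making the embedding problem solvable over the global field $K$. The identifications $\Gal(P/N) = \overline{M}$ and $\Gal(P/L_i) = \overline{\lambda}^{-1}(H_i)$ are then automatic from the compatibility of $\overline{\lambda}$ with restriction and from $L_i = N^{H_i}$.

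Third, to conclude $\mathfrak{F}(P,L,K) = \mathfrak{K}(L,K)$, I would note that the numerators in the two quotients coincide, so it suffices to show $K^* \cap N_{P/K}(\A_P^*) \subseteq \prod_{i=1}^n N_{L_i/K}(L_i^*)$. Since $\hat{\operatorname{H}}^{-3}(\overline{G},\mathbb{Z}) = 0$, Tate's cohomological formula for the knot group \cite[p.~198]{C-F} gives the Hasse norm principle for $P/K$. Hence $K^* \cap N_{P/K}(\A_P^*) = N_{P/K}(P^*)$, and transitivity of norms through the tower $P \supseteq L_i \supseteq K$ gives $N_{P/K}(P^*) \subseteq N_{L_i/K}(L_i^*)$ for each $i$, from which the desired inclusion follows.

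The main obstacle will be the second step: solving the central embedding problem while keeping the Schur multiplier of $\overline{G}$ trivial, so that the Hasse norm principle for $P/K$ in the third step is guaranteed. This delicate construction is the group-theoretic core of \cite{Drak}, and it transfers unchanged to the multinorm setting: once $\overline{G}$ and $P/K$ are in place, the third step becomes a formal consequence of the Hasse norm principle for $P/K$ combined with transitivity of norms.
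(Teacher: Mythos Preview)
Your overall strategy matches the paper's: defer the construction of $P/K$ to \cite{Drak} (and \cite{opolka}), then pass from the single-norm statement to the multinorm statement. However, there is a genuine error in your first step. A Schur covering group of $G$ does \emph{not} in general have trivial Schur multiplier; for instance $D_4$ is a Schur cover of $C_2\times C_2$ but $\hat{\operatorname{H}}^{-3}(D_4,\mathbb{Z})\cong C_2$. So your claim ``with the additional property that $\hat{\operatorname{H}}^{-3}(\overline{G},\mathbb{Z})=0$'' is false as stated, and this is precisely what your third step relies on to get the Hasse norm principle for $P/K$ via Tate's formula. Your step~2 hints at adjusting $\overline{G}$ to recover this, but once you are already appealing to the delicate construction in \cite{Drak}, there is no gain in first fixing a Schur cover.

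The paper sidesteps this entirely. It does not claim that the HNP holds for $P/K$; it only quotes from \cite[Lemma~1]{Drak} (cf.\ \cite[Satz~3]{opolka}) the weaker conclusion that $\mathfrak{F}(P/L_i/K)=\mathfrak{K}(L_i/K)$ for each~$i$, i.e.\ $K^*\cap N_{P/K}(\mathbb{A}_P^*)\subseteq N_{L_i/K}(L_i^*)$, and then applies Lemma~\ref{lem:1stobs_equal_multiknot}. Your third step is in fact a reproof of Lemma~\ref{lem:1stobs_equal_multiknot} under the unnecessarily strong hypothesis that $P/K$ satisfies the HNP; the inclusion $K^*\cap N_{P/K}(\mathbb{A}_P^*)\subseteq \prod_i N_{L_i/K}(L_i^*)$ you need follows already from Drakokhrust's weaker condition for a single~$i$, with no control on $\hat{\operatorname{H}}^{-3}(\overline{G},\mathbb{Z})$ required.
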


\begin{proof}
It follows from the proof of \cite[Lemma 1]{Drak} (see also \cite[Satz 3]{opolka}) that there exists a Galois extension $P/K$ such that the first obstruction to the Hasse norm principle $ \mathfrak{F}(P/L_i/K)$ coincides with the knot group $\mathfrak{K}(L_i/K)$ for all $L_i \in L$. Now apply Lemma \ref{lem:1stobs_equal_multiknot}. The stated properties of $P/K$ are shown in the references given above.
\end{proof}

As remarked in \cite{Drak}, the extension $P/K$ is not uniquely determined and the computation of its arithmetic is not always easy. Nonetheless, one can still compute $\mathfrak{F}(P,L,K)$ by commencing with an arbitrary generalized represention group of $G$.

Let $\widetilde{G}$ be any generalized representation group of $G$ with projection map $\widetilde{\lambda}$ and base normal subgroup $\widetilde{M}$. For any subgroup $B$ of $G$, define $\widetilde{B}=\widetilde{\lambda}^{-1}(B)$. We will use the following auxiliary lemma:

\begin{lemma}\label{lem:tau_1}
There exists an isomorphism $$\tau:[\widetilde{G},\widetilde{G}] \xrightarrow[]{\simeq} [\overline{G},\overline{G}]$$

\noindent with the following properties:

\begin{enumerate}[label=(\roman{*})]
    \item\label{prop_tau1} $\overline{\lambda}(\tau(a))=\widetilde{\lambda}(a)$ for every $a \in [\widetilde{G},\widetilde{G}]$;
    \item\label{prop_tau2} $\tau([\widetilde{g}_1,\widetilde{g}_2])=[\overline{g}_1,\overline{g}_2]$ for all $\widetilde{g}_1,\widetilde{g}_2 \in \widetilde{G}$ and $\overline{g}_1,\overline{g}_2 \in \overline{G}$ such that $\widetilde{\lambda}(\widetilde{g}_i)=\overline{\lambda}(\overline{g}_i)$.
\end{enumerate}

\noindent For any subgroup $B$ of $G$, $\tau$ further identifies

\begin{itemize}
\item $[\widetilde{B},\widetilde{B}] \cong [\overline{B},\overline{B}]$ and
    \item $\widetilde{M} \cap [\widetilde{B},\widetilde{B}] \cong \overline{M} \cap [\overline{B},\overline{B}].$ 
\end{itemize}

\end{lemma}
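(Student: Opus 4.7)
The guiding observation is that in any central extension $1 \to M \to E \xrightarrow{\lambda} G \to 1$, the commutator $[a,b]$ of two elements of $E$ depends only on the images $\lambda(a), \lambda(b) \in G$: replacing $a$ by $am$ with $m \in M$ central leaves $[a,b]$ unchanged. Applied to both $\widetilde{G}$ and $\overline{G}$, this makes the formula in (ii) the only candidate prescription for $\tau$ on generating commutators, and the substance of the argument is to promote this prescription to a well-defined isomorphism on all of $[\widetilde{G},\widetilde{G}]$. I propose to do so by realising both sides as quotients of a common Hopf-type model.

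Concretely, I would write $G = F/R$ with $F$ free and set $E_0 := F/[F,R]$, a central extension of $G$ with kernel $R/[F,R]$ for which Hopf's formula yields
\[ [E_0,E_0] \cap R/[F,R] \;=\; \bigl([F,F] \cap R\bigr)/[F,R] \;\cong\; \hat{\operatorname{H}}^{-3}(G,\Z). \]
Lifting free generators of $F$ to $\widetilde{G}$ and to $\overline{G}$ produces homomorphisms $F \to \widetilde{G}$ and $F \to \overline{G}$; since $\widetilde{M}$ and $\overline{M}$ are central, both kill $[F,R]$ and descend to maps $E_0 \to \widetilde{G}$ and $E_0 \to \overline{G}$. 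Restricting to derived subgroups yields
\[ \pi_{\widetilde{G}} \colon [E_0,E_0] \to [\widetilde{G},\widetilde{G}], \qquad \pi_{\overline{G}} \colon [E_0,E_0] \to [\overline{G},\overline{G}], \]
and by lift-independence these restrictions are independent of the choices made. Surjectivity is immediate: if $I$ denotes the image of $F$ in $\widetilde{G}$ then $\widetilde{G} = I \cdot \widetilde{M}$, hence $[\widetilde{G},\widetilde{G}] = [I,I]$ by centrality, so every generating commutator of the target sits in the image of $\pi_{\widetilde{G}}$. Injectivity is the decisive point: $\ker \pi_{\widetilde{G}}$ is contained in $R/[F,R] \cap [E_0,E_0] \cong \hat{\operatorname{H}}^{-3}(G,\Z)$, and the map from this subgroup into $\widetilde{M} \cap [\widetilde{G},\widetilde{G}]$ coincides with the transgression in the five-term exact sequence of $1 \to \widetilde{M} \to \widetilde{G} \to G \to 1$, whose image is exactly $\widetilde{M} \cap [\widetilde{G},\widetilde{G}]$. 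The generalised representation group hypothesis declares the latter abstractly isomorphic to $\hat{\operatorname{H}}^{-3}(G,\Z)$, and since $G$ is finite both groups are finite of equal order, so the transgression is an isomorphism and $\pi_{\widetilde{G}}$ is injective. The same argument handles $\pi_{\overline{G}}$, and I set $\tau := \pi_{\overline{G}} \circ \pi_{\widetilde{G}}^{-1}$.

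The stated properties then fall out of the construction: (i) holds because both $\widetilde{\lambda} \circ \pi_{\widetilde{G}}$ and $\overline{\lambda} \circ \pi_{\overline{G}}$ equal the natural projection $[E_0,E_0] \to [G,G]$ induced by $F \to G$; (ii) follows by computing $\tau([\widetilde{g}_1,\widetilde{g}_2])$ through an intermediate commutator $[\widehat{g}_1,\widehat{g}_2] \in [E_0,E_0]$ of any $E_0$-lifts of $\widetilde{\lambda}(\widetilde{g}_i)$, invoking lift-independence at both stages. For the subgroup claims, each generating commutator $[a,b]$ of $[\widetilde{B},\widetilde{B}]$ is sent by $\tau$ to $[\overline{a},\overline{b}]$ for lifts $\overline{a},\overline{b}$ which may be chosen inside $\overline{\lambda}^{-1}(B) = \overline{B}$, so $\tau$ restricts to a map $[\widetilde{B},\widetilde{B}] \to [\overline{B},\overline{B}]$; the symmetric argument for $\tau^{-1}$ promotes this to an isomorphism, and the intersection identification then follows from (i), since $a \in \widetilde{M}$ iff $\widetilde{\lambda}(a) = 1$ iff $\overline{\lambda}(\tau(a)) = 1$ iff $\tau(a) \in \overline{M}$. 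I expect the heart of the proof to lie in the injectivity step for $\pi_{\widetilde{G}}$ and $\pi_{\overline{G}}$, since that is the only place where the generalised representation group hypothesis is used essentially; all other steps are routine commutator bookkeeping made possible by the centrality of $\widetilde{M}$ and $\overline{M}$.
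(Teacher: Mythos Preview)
Your construction is correct and is essentially the argument underlying the references the paper cites. The paper does not prove this lemma at all: it simply invokes Karpilovsky's \emph{The Schur Multiplier}, Theorems 2.4.6(iv) and 2.5.1(i), for the existence of $\tau$ with properties (i)--(ii), and then observes that the two bulleted identifications are immediate from those properties. What you have done is reconstruct the standard proof behind those citations---building $\tau$ through a free presentation $G=F/R$, the universal central extension $E_0=F/[F,R]$, and the Hopf description of $H_2(G,\Z)\cong ([F,F]\cap R)/[F,R]$---and then identifying the key injectivity step with the transgression in the five-term sequence, where the generalized-representation-group hypothesis forces a surjection between finite groups of equal order to be an isomorphism. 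This is exactly the mechanism in Karpilovsky's treatment (phrased there in the language of isoclinism of central extensions), so your approach is not genuinely different from the paper's, only more explicit.
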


\begin{proof}
The isomorphism $\tau$ is constructed in \cite[Theorems 2.4.6(iv) and 2.5.1(i)]{kar} and the stated properties are clear from this construction. The additional identifications follow from \ref{prop_tau1} and \ref{prop_tau2}.\end{proof}

Let $R$ be the set of ramified places of $N/K$. For any $v \in \Omega_K$, set
\[\widetilde{S}_v=\begin{cases}
\widetilde{G_v}\textrm{, if $v \in R$,}\\
\textrm{a cyclic subgroup of } \widetilde{G_v} \textrm{ such that } \widetilde{\lambda}(\widetilde{S}_v) = G_v \textrm{, otherwise.}
\end{cases}\]

\noindent Furthermore, by the Chebotarev density theorem we can (and do) choose the subgroups $\widetilde{S}_v$ for $v \not\in R$ in such a way that all the cyclic subgroups of $\widetilde{G_v}$ such that $\widetilde{\lambda}(\widetilde{S}_v) = G_v$ occur. 
\begin{remark}
As pointed out in \cite[p. 31]{Drak}, a double coset decomposition $\overline{G}=\bigcup\limits_{k=1}^{r_{v,i}} \overline{H_i} \overline{x}_{i,k}  \overline{G_v}$ corresponds to a double coset decomposition $\widetilde{G}=\bigcup\limits_{k=1}^{r_{v,i}} \widetilde{H_i} \widetilde{x}_{i,k}  \widetilde{S}_v$, where $\widetilde{x}_{i,k}$ are any elements of $\widetilde{G}$ such that $\widetilde{\lambda}(\widetilde{x}_{i,k} )=\overline{\lambda}(\overline{x}_{i,k} )$.
\end{remark}

Consider the following diagram analogous to \eqref{diag:1stobs_defn}:
\begin{equation}\label{diag:1stobs_defn_generalized}
\xymatrix{
\bigoplus\limits_{i=1}^{n} \widetilde{H_i}^{\textrm{ab}}  \ar[r]^{\widetilde{\psi}_1} & \widetilde{G}^{\textrm{ab}}\\
\bigoplus\limits_{i=1}^{n}(\bigoplus\limits_{v \in \Omega_K} ( \bigoplus\limits_{w|v} {\widetilde{H}_{i,w}^{\textrm{ab}} }) ) \ar[r]^{\ \ \ \ \ \ \ \widetilde{\psi}_2} \ar[u]^{\widetilde{\varphi}_1}&\bigoplus\limits_{v \in \Omega_K}{\widetilde{S}_v^{\textrm{ab}} }\ar[u]_{\widetilde{\varphi}_2}
}
\end{equation}

\noindent where $\widetilde{H}_{i,w}=\widetilde{H_i} \cap \widetilde{x}_{i,k} \widetilde{S}_v \widetilde{x}_{i,k}^{-1}$ and all the maps are defined as in diagram \eqref{diag:1stobs_defn}. 

We now prove the main result of this section, namely that the object $\ker \widetilde{\psi}_1 / \widetilde{\varphi}_1(\ker \widetilde{\psi}_2)$ does not depend on the choice of generalized representation group (and thus, by Theorem \ref{thm:thm1DP_gen} and Proposition \ref{prop:1st=knot}, it always coincides with $\mathfrak{K}(L,K)$). Before we show this, we need a lemma. To ease the notation, we often omit the cosets $\widetilde{H_i}'$ and $\overline{H_i}'$ when working with elements of $\ker \widetilde{\psi}_1$ or $\ker \overline{\psi}_1$.

\begin{lemma}\label{lem:simpl_inters}
For any indices $1 \leq i_1 < i_2 \leq n$ and any $m \in \widetilde{H_{i_1}} \cap\widetilde{H_{i_2}} $, we have
$$h=(1,\dots,\underbrace{m}_{i_1\textrm{-th entry}}, 1,\dots, 1, \underbrace{m^{-1}}_{i_2\textrm{-th entry}}, 1,\dots, 1) \in \widetilde{\varphi}_1(\ker \widetilde{\psi}_2^{nr}).$$
\end{lemma}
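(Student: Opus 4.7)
The plan is to produce an explicit preimage: a single element $\widetilde{h}$ of the lower-left corner of diagram \eqref{diag:1stobs_defn_generalized}, supported at one unramified place $v$, which maps to $h$ under $\widetilde{\varphi}_1$ and to $0$ under $\widetilde{\psi}_2$. The essential input is the Chebotarev-style choice of the cyclic subgroups $\widetilde{S}_v$ stated just before the lemma: every cyclic subgroup of $\widetilde{G}$ projecting onto a decomposition group in $G$ is realized as $\widetilde{S}_v$ for some unramified place $v$.

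First I would set $g = \widetilde{\lambda}(m) \in H_{i_1} \cap H_{i_2}$ and observe that the cyclic subgroup $\langle m \rangle \subset \widetilde{G}$ projects onto $\langle g \rangle \subset G$. By the Chebotarev density theorem applied to the conjugacy class of $g$, together with the realization statement above, one can find an unramified place $v$ for which $G_v = \langle g \rangle$ and $\widetilde{S}_v = \langle m \rangle$. Since $m$ lies in both $\widetilde{H_{i_1}} \cap \widetilde{H_{i_2}}$ and $\widetilde{S}_v$, the element $1 \in \widetilde{G}$ is a legitimate representative of one of the double cosets in the decomposition $\widetilde{G} = \bigcup_{k} \widetilde{H_{i_j}} \widetilde{x}_{i_j,k} \widetilde{S}_v$ for both $j = 1, 2$. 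Taking $\widetilde{x}_{i_j,1} = 1$ and denoting by $w_j$ the associated place of $L_{i_j}$, one obtains $m \in \widetilde{H}_{i_j,w_j} = \widetilde{H_{i_j}} \cap \widetilde{S}_v$.

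Next I would let $\widetilde{h}$ be the element of $\bigoplus_{i=1}^{n}(\bigoplus_{v' \textrm{ unram}}(\bigoplus_{w \mid v'} \widetilde{H}_{i,w}^{\textrm{ab}}))$ whose $(i_1, v, w_1)$-component is the class of $m$ in $\widetilde{H}_{i_1,w_1}^{\textrm{ab}}$, whose $(i_2, v, w_2)$-component is the class of $m^{-1}$ in $\widetilde{H}_{i_2,w_2}^{\textrm{ab}}$, and whose remaining components are zero.

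Finally I would verify the two required properties. The identity $\widetilde{\varphi}_1(\widetilde{h}) = h$ is immediate from the definition of $\widetilde{\varphi}_1$ as the map induced by the inclusions $\widetilde{H}_{i,w} \hookrightarrow \widetilde{H_i}$. For $\widetilde{\psi}_2(\widetilde{h})$, the components at places $v' \neq v$ vanish trivially, while the component at $v$ is computed via conjugation by $\widetilde{x}_{i_j,1}^{-1} = 1$, giving $m \cdot m^{-1} = 1$ in $\widetilde{S}_v^{\textrm{ab}}$. Hence $\widetilde{h} \in \ker \widetilde{\psi}_2^{nr}$, completing the proof. No genuine obstacle is expected: the only non-routine ingredient is the Chebotarev realization of cyclic lifts, which has been absorbed into the setup immediately preceding the lemma.
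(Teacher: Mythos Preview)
Your proof is correct and follows essentially the same approach as the paper: both arguments pick an unramified place $v$ with $\widetilde{S}_v = \langle m \rangle$ (available by the Chebotarev-based setup immediately preceding the lemma), take the trivial double coset representative for indices $i_1$ and $i_2$, place $m$ and $m^{-1}$ in the corresponding local slots, and verify directly that $\widetilde{\psi}_2$ kills this element while $\widetilde{\varphi}_1$ sends it to $h$. Your write-up is slightly more explicit about why such a $v$ exists, but there is no substantive difference.
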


\begin{proof}
We construct a vector $\alpha \in  \bigoplus\limits_{i=1}^{n}(\bigoplus\limits_{\substack{v \in \Omega_K \\ v \text{ unramified}}}( \bigoplus\limits_{w|v} {\widetilde{H}_{i,w}^{\textrm{ab}} }) )$ such that $ \widetilde{\psi}_2(\alpha)=1$ and $\widetilde{\varphi}_1(\alpha)=h$. Let $v$ be an unramified place of $K$ such that $\widetilde{S}_v = \langle m \rangle$. By definition, if $\widetilde{G}=\bigcup\limits_{k=1}^{r_{v,i}} \widetilde{H_i} \widetilde{x}_{i,k} \widetilde{S}_v$ is a double coset decomposition of $\widetilde{G}$, then $\widetilde{H}_{i,w}= \widetilde{H_i} \cap \widetilde{x}_{i,k} \widetilde{S}_v \widetilde{x}_{i,k}^{-1}$. Let us suppose, without loss of generality, that $\widetilde{x}_{i_1,{k_1}}=1=\widetilde{x}_{i_2,{k_2}}$ for some index $1 \leq k_1 \leq r_{v,i_1}$ (respectively, $1 \leq k_2 \leq r_{v,i_2}$) corresponding to a place $w_1 \in \Omega_{L_{i_1}}$ (respectively, $w_2 \in \Omega_{L_{i_2}}$) via Lemma \ref{lem1DP}. In this way, we have $m \in \widetilde{H}_{i_1,w_1}$ and $m^{-1} \in \widetilde{H}_{i_2,w_2}$. Setting the $(i_1,v,w_1)$-th (respectively, $(i_2,v,w_2)$-th) entry of $\alpha$ to be equal to $m$ (respectively, $m^{-1}$) and all other entries equal to $1$, we obtain $
\widetilde{\psi}_2(\alpha)=1$ and $\widetilde{\varphi}_1(\alpha)=h$.
\end{proof}

\begin{theorem}\label{thm:main_knot}

In the notation of diagram \eqref{diag:1stobs_defn_generalized}, we have
$$\mathfrak{K}(L,K) \cong \ker \widetilde{\psi}_1 / \widetilde{\varphi}_1(\ker \widetilde{\psi}_2).$$
\end{theorem}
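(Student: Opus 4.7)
The plan is to reduce to the special generalized representation group produced by Proposition \ref{prop:1st=knot} and then to show that the quotient $\ker \widetilde{\psi}_1 / \widetilde{\varphi}_1(\ker \widetilde{\psi}_2)$ does not depend on the choice of $\widetilde{G}$. By Proposition \ref{prop:1st=knot}, there is a Galois extension $P/K \supset N/K$ such that $\overline{G}=\Gal(P/K)$ is a generalized representation group of $G$ with $\overline{\lambda}^{-1}(H_i)=\Gal(P/L_i)$ and, crucially, $\mathfrak{F}(P,L,K)=\mathfrak{K}(L,K)$. Applying Theorem \ref{thm:thm1DP_gen} to the tower $P/L/K$ gives an isomorphism
\[\mathfrak{K}(L,K) \;=\; \mathfrak{F}(P,L,K) \;\cong\; \ker \overline{\psi}_1 / \overline{\varphi}_1(\ker \overline{\psi}_2).\]
Hence it suffices to exhibit, for our arbitrary generalized representation group $\widetilde{G}$, a natural isomorphism
\[\Theta: \ker \widetilde{\psi}_1 / \widetilde{\varphi}_1(\ker \widetilde{\psi}_2) \;\xrightarrow{\simeq}\; \ker \overline{\psi}_1 / \overline{\varphi}_1(\ker \overline{\psi}_2).\]

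To construct $\Theta$, I would use the isomorphism $\tau:[\widetilde{G},\widetilde{G}] \xrightarrow{\simeq} [\overline{G},\overline{G}]$ of Lemma \ref{lem:tau_1}. Given a tuple $(h_i[\widetilde{H_i},\widetilde{H_i}])_i \in \ker \widetilde{\psi}_1$, we have $\prod_i h_i \in [\widetilde{G},\widetilde{G}]$, so $\tau(\prod_i h_i)\in[\overline{G},\overline{G}]$. For each $i$, pick any lift $\overline{h}_i \in \overline{H_i}$ of $\widetilde{\lambda}(h_i) \in H_i$. Since $\overline{\lambda}(\prod_i \overline{h}_i)=\widetilde{\lambda}(\prod_i h_i)=\overline{\lambda}(\tau(\prod_i h_i))$, the element $\prod_i \overline{h}_i \cdot \tau(\prod_i h_i)^{-1}$ lies in $\ker \overline{\lambda}=\overline{M} \subseteq Z(\overline{G})$; since $\overline{M}\subseteq \overline{H_j}$ for every $j$, we may modify one of the lifts (say $\overline{h}_1$) by a central element to arrange $\prod_i \overline{h}_i=\tau(\prod_i h_i) \in [\overline{G},\overline{G}]$. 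This gives a candidate element $(\overline{h}_i[\overline{H_i},\overline{H_i}])_i \in \ker \overline{\psi}_1$, and we define $\Theta$ by sending the class of $(h_i)_i$ to the class of $(\overline{h}_i)_i$.

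The bulk of the work is then to check that $\Theta$ is well-defined and bijective. Well-definedness splits into three parts: independence of the choice of $\overline{h}_i$ (two choices differ by elements of $\overline{M}$, and I would use Lemma \ref{lem:simpl_inters} applied to $\overline{G}$ and central elements of $\overline{M}$ to show that the resulting discrepancy lies in $\overline{\varphi}_1(\ker \overline{\psi}_2^{nr})$), independence of the representatives $h_i$ modulo $[\widetilde{H_i},\widetilde{H_i}]$ (direct from $\tau([\widetilde{H_i},\widetilde{H_i}])=[\overline{H_i},\overline{H_i}]$ in Lemma \ref{lem:tau_1}), and the fact that $\widetilde{\varphi}_1(\ker \widetilde{\psi}_2)$ is mapped inside $\overline{\varphi}_1(\ker \overline{\psi}_2)$ (this uses the compatibility of $\tau$ with commutators at each place $v$, together with the bijection between double coset decompositions $\widetilde{G}=\bigcup \widetilde{H_i}\widetilde{x}_{i,k}\widetilde{S}_v$ and $\overline{G}=\bigcup \overline{H_i}\overline{x}_{i,k}\overline{G_v}$ noted in the remark preceding diagram \eqref{diag:1stobs_defn_generalized}). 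Symmetry of the construction (swapping the roles of $\widetilde{G}$ and $\overline{G}$ and using $\tau^{-1}$) yields an inverse to $\Theta$, proving bijectivity.

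The main obstacle will be the compatibility argument for $\widetilde{\varphi}_1(\ker \widetilde{\psi}_2)$: one must track how a local element $\alpha$ with $\widetilde{\psi}_2(\alpha)=1$ transports to the $\overline{G}$-side. The cyclic subgroups $\widetilde{S}_v$ chosen for unramified $v$, combined with the Chebotarev choice that exhausts all cyclic lifts, are precisely what makes this transport work: any central adjustment in $\overline{M}$ appearing in the process can be absorbed by unramified data, exactly as in Lemma \ref{lem:simpl_inters}. This parallels Drakokhrust's argument \cite{Drak} in the $n=1$ case, with the tuple structure handled componentwise and the interaction between components controlled by Lemma \ref{lem:simpl_inters}.
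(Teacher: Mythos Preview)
Your plan is essentially the paper's own proof: reduce via Proposition \ref{prop:1st=knot} and Theorem \ref{thm:thm1DP_gen} to comparing two generalized representation groups, build the comparison map using $\tau$ from Lemma \ref{lem:tau_1}, check well-definedness via Lemma \ref{lem:simpl_inters}, and obtain bijectivity by symmetry. One point to sharpen: in the ``main obstacle'' paragraph you blend two mechanisms. The absorption-by-unramified-data idea (Lemma \ref{lem:simpl_inters}) only produces elements of the shape $(m,m^{-1},1,\dots,1)$, so it cannot by itself kill a leftover $(m,1,\dots,1)$ with $m\in\overline{M}$; what actually makes the transport of $\ker\widetilde{\psi}_2^v$ work is showing that the central discrepancy $m$ lies in $[\overline{G_v},\overline{G_v}]$, and this is exactly the ``compatibility of $\tau$ with commutators'' you allude to---the paper carries this out by writing $\prod_{i,k}\widetilde h_{i,k}$ as $\beta\cdot\widetilde\psi_2(\alpha)$ with $\beta$ an explicit product of commutators and applying Lemma \ref{lem:tau_1}\ref{prop_tau2}.
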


\begin{proof}
By Theorem \ref{thm:thm1DP_gen} and Proposition \ref{prop:1st=knot}, we have $\mathfrak{K}(L,K) \cong \ker \overline{\psi}_1 / \overline{\varphi}_1(\ker \overline{\psi}_2)$, where the $\overline{\phantom{a}}$ notation is as in diagram \eqref{diag:1stobs_defn_generalized} with respect to the groups of Proposition \ref{prop:1st=knot}. Therefore, it suffices to prove that $$\ker \widetilde{\psi}_1 / \widetilde{\varphi}_1(\ker \widetilde{\psi}_2) \cong \ker \overline{\psi}_1 /  \overline{\varphi}_1(\ker \overline{\psi}_2).$$

\noindent Define
\begin{align*}
  f \colon \ker \widetilde{\psi}_1 / \widetilde{\varphi}_1(\ker \widetilde{\psi}_2) &\longrightarrow \ker \overline{\psi}_1 /  \overline{\varphi}_1(\ker \overline{\psi}_2) \\
  (\widetilde{h}_1 ,\dots,\widetilde{h}_n  ) &\longmapsto  (\overline{h}_1 ,\dots, \overline{h}_n )
\end{align*}

\noindent where, for each $i=1,\dots,n$, the element $\overline{h}_i \in \overline{H_i}$ is selected as follows: 
take $\overline{h}_i \in \overline{H_i}$ such that $\overline{\lambda}(\overline{h}_i)=\widetilde{\lambda}(\widetilde{h}_i)$ (note that $\overline{h}_i$ is only defined modulo ${\overline{M}}=\ker \overline{\lambda}$). In this way, we have $\overline{\lambda}(\overline{h}_1 \dots \overline{h}_n)=\widetilde{\lambda}(\widetilde{h}_1 \dots \widetilde{h}_n)$. Additionally, by Lemma \ref{lem:tau_1}\ref{prop_tau1}, $\overline{\lambda}(\tau(\widetilde{h}_1 \dots \widetilde{h}_n))=\widetilde{\lambda}(\widetilde{h}_1 \dots \widetilde{h}_n)$ and thus 
\begin{equation}\label{tau_eq}
    \tau(\widetilde{h}_1 \dots \widetilde{h}_n)=\overline{h}_1 \dots  \overline{h}_n m
\end{equation}
\noindent for some $m \in \overline{M}$. Changing $\overline{h}_n$ if necessary, we assume that $m=1$ so that $\overline{h}_1 \dots  \overline{h}_n \in [\overline{G},\overline{G}]$ and therefore $ (\overline{h}_1 ,\dots, \overline{h}_n ) \in \ker \overline{\psi}_1$.

\medskip

\textbf{Claim 1:} $f$ is well defined, i.e. it does not depend on the choice of the elements $\overline{h}_i$ and moreover ${f(\widetilde{\varphi}_1(\ker \widetilde{\psi}_2)) \subset \overline{\varphi}_1(\ker \overline{\psi}_2)}$.

\textbf{Proof:} We first prove that $f$ does not depend on the choice of $\overline{h}_i$. Suppose that, for each $i=1,\dots,n$, we choose elements $\underline{h}_i \in \overline{H_i}$ satisfying $\widetilde{\lambda}(\widetilde{h}_i)=\overline{\lambda}(\underline{h}_i)$ and $\tau(\widetilde{h}_1\dots\widetilde{h}_n)=\underline{h}_1 \dots \underline{h}_n$. We show that $(\underline{h}_1,\dots,\underline{h}_n)=(\overline{h}_1,\dots,\overline{h}_n)$ in $\ker \overline{\psi}_1 /  \overline{\varphi}_1(\ker \overline{\psi}_2)$. Writing $\underline{h}_i=\overline{h}_i m_i$ for some $m_i \in \overline{M}$, it suffices to prove that $(m_1,\dots,m_n) \in \overline{\varphi}_1(\ker \overline{\psi}_2)$. Since $\overline{h}_1\dots \overline{h}_n =\tau(\widetilde{h}_1 \dots\widetilde{h}_n)=\underline{h}_1 \dots \underline{h}_n$ and the elements $m_i$ are in $\overline{M} \subset Z(\overline{G})$, we obtain $m_1 \dots m_n=1$. As $\overline{M} \subset \bigcap\limits_{i=1}^{n} \overline{H_i}$, multiplying $(m_1,\dots,m_n)$ by $(m_2,m_2^{-1},1,\dots,1)$ (which lies in $\overline{\varphi}_1(\ker \overline{\psi}_2)$ by Lemma \ref{lem:simpl_inters}), we have $(m_1,\dots,m_n)\equiv(m_1 m_2, 1,m_3,\dots , m_n) \pmod{\overline{\varphi}_1(\ker \overline{\psi}_2)}$. Repeating this procedure, we obtain $(m_1,\dots,m_n) \equiv (m_1\dots m_n,\dots,1)=(1,\dots,1) \pmod{\overline{\varphi}_1(\ker \overline{\psi}_2)}$ and therefore $(m_1,\dots,m_n)$ is in $\overline{\varphi}_1(\ker \overline{\psi}_2)$, as desired. 

We now show that $f(\widetilde{\varphi}_1(\ker \widetilde{\psi}_2)) \subset \overline{\varphi}_1(\ker \overline{\psi}_2)$. It suffices to check that $f(\widetilde{\varphi}_1(\ker \widetilde{\psi}_2^{v})) \subset \overline{\varphi}_1(\ker \overline{\psi}_2^{v})$ for any $v \in \Omega_K$. For $i=1,\dots,n$, let $\widetilde{G}=\bigcup\limits_{k=1}^{r_{v,i}} \widetilde{H_i} \widetilde{x}_{i,k} \widetilde{S}_v$ be a double coset decomposition of $\widetilde{G}$ and recall that, by definition, the group $\widetilde{H}_{i,w}$ equals $\widetilde{H_i} \cap \widetilde{x}_{i,k} \widetilde{S}_v \widetilde{x}_{i,k}^{-1}$ if $w \in \Omega_{L_i}$ corresponds to the double coset $\widetilde{H_i} \widetilde{x}_{i,k} \widetilde{S}_v$. Let $\alpha = \bigoplus\limits_{i=1}^{n} \bigoplus\limits_{k=1}^{r_{v,i}} \widetilde{h}_{i,k}  \in \ker \widetilde{\psi}_2^{v}$, where $\widetilde{h}_{i,k} \in \widetilde{H}_{i,w}$ for all possible $i,k$. We thus have 
\begin{equation}\label{assumpt_v0}
    \widetilde{\psi}_2(\alpha)=\prod\limits_{i=1}^{n}\prod\limits_{k=1}^{r_{v,i}} \widetilde{x}_{i,k}^{-1} \widetilde{h}_{i,k} \widetilde{x}_{i,k}  \in [\widetilde{S}_v,\widetilde{S}_v].
\end{equation}

\noindent For any $i=1,\dots,n$ define $\widetilde{h}_i= \prod\limits_{k=1}^{r_{v,i}} \widetilde{h}_{i,k}$. We need to show that $f(\widetilde{h}_1,\dots,\widetilde{h}_n)$ is in $\overline{\varphi}_1(\ker \overline{\psi}_2^v)$.

Set $x_{i,k}:=\widetilde{\lambda}(\widetilde{x}_{i,k}) \in G$ and $h_{i,k}:=\widetilde{\lambda}(\widetilde{h}_{i,k})  \in {H}_{i} \cap x_{i,k} G_v x_{i,k}^{-1}$ for all possible $i,k$. We have $\prod\limits_{i=1}^{n} \prod\limits_{k=1}^{r_{v,i}} x_{i,k}^{-1} h_{i,k} x_{i,k} \in [G_v,G_v]$. Let $\overline{x}_{i,k} \in \overline{G}$ be such that $\overline{\lambda}(\overline{x}_{i,k})=x_{i,k}$ and $\overline{h}_{i,k} \in \overline{H}_{i} \cap \overline{x}_{i,k} \overline{G_v} \overline{x}_{i,k}^{-1}$ satisfying $\overline{\lambda}(\overline{h}_{i,k})=h_{i,k}$. Multiplying one of the $\overline{h}_{1,k}$ by an element of $\overline{M}$ if necessary, we can assure that \begin{equation}\label{assumpt_v}
    \prod\limits_{i=1}^{n} \prod\limits_{k=1}^{r_{v,i}} \overline{x}_{i,k}^{-1} \overline{h}_{i,k} \overline{x}_{i,k} \in [\overline{G_v},\overline{G_v}].
\end{equation}

\noindent In particular, $\alpha':=\bigoplus\limits_{i=1}^{n}\bigoplus\limits_{k=1}^{r_{v,i}} \overline{h}_{i,k}$ is in $ \ker \overline{\psi}_2^{v}$. Defining $\overline{h}_i := \prod\limits_{k=1}^{r_{v,i}} \overline{h}_{i,k}$ for $i=1,\dots,n$, we get $\overline{\varphi_1}(\alpha')=(\overline{h}_1 , \dots, \overline{h}_n )$. We have $\widetilde{\lambda}(\widetilde{h}_i)=\overline{\lambda}(\overline{h}_i)$ by construction and therefore
$$\tau(\widetilde{h}_1 \dots \widetilde{h}_n)=\overline{h}_1 \dots \overline{h}_n m$$
\noindent for some $m \in \overline{M}$. We prove that $m$ is also in $[\overline{G_v},\overline{G_v}]$ so that, by multiplying one of the elements $\overline{h}_{1,k}$ by $m^{-1} \in \overline{M} \cap [\overline{G_v},\overline{G_v}]$ if necessary (note that doing so does not change condition \eqref{assumpt_v}), we obtain $f(\widetilde{h}_1,\dots,\widetilde{h}_n)=(\overline{h}_1,\dots,\overline{h}_n) $. As $(\overline{h}_1,\dots,\overline{h}_n)$ is in $\overline{\varphi}_1(\ker \overline{\psi}_2^{v})$, this proves the claim.

Note that 
$$\prod\limits_{i=1}^{n} \prod\limits_{k=1}^{r_{v,i}}  \widetilde{h}_{i,k}=(\prod\limits_{i=1}^{n} \prod\limits_{k=1}^{r_{v,i}}  \widetilde{h}_{i,k}) (\prod\limits_{i=n}^{1} \prod\limits_{k=r_{v,i}}^{1} \widetilde{x}_{i,k}^{-1} \widetilde{h}_{i,k}^{-1} \widetilde{x}_{i,k}) \widetilde{\psi}_2(\alpha).$$
\noindent Denote $(\prod\limits_{i=1}^{n} \prod\limits_{k=1}^{r_{v,i}}  \widetilde{h}_{i,k}) (\prod\limits_{i=n}^{1} \prod\limits_{k=r_{v,i}}^{1} \widetilde{x}_{i,k}^{-1} \widetilde{h}_{i,k}^{-1} \widetilde{x}_{i,k})$ by $\beta$. Then $\beta \in [\widetilde{G},\widetilde{G}]$ and using an explicit description of $\beta$ as a product of commutators and Lemma \ref{lem:tau_1}\ref{prop_tau2}, we deduce that $\tau(\beta)=\beta'$, where $\beta'=(\prod\limits_{i=1}^{n} \prod\limits_{k=1}^{r_{v,i}}  \overline{h}_{i,k}) (\prod\limits_{i=n}^{1} \prod\limits_{k=r_{v,i}}^{1} \overline{x}_{i,k}^{-1} \overline{h}_{i,k}^{-1} \overline{x}_{i,k})$. Therefore, we have
$$\prod\limits_{i=1}^{n} \overline{h}_{i}=\prod\limits_{i=1}^{n} \prod\limits_{k=1}^{r_{v,i}}  \overline{h}_{i,k} \equiv \beta' = \tau(\beta) \equiv \tau(\prod\limits_{i=1}^{n} \widetilde{h}_{i}) \pmod{[\overline{G_v},\overline{G_v}]},$$
\noindent and thus $m \in [\overline{G_v},\overline{G_v}]$, as desired.
\medskip

\textbf{Claim 2:} $f$ is a homomorphism.

\textbf{Proof:} Let $h=(\widetilde{h}_1 ,\dots,\widetilde{h}_n ),h'=(\widetilde{h}'_1 ,\dots,\widetilde{h}'_n ) \in \ker \widetilde{\psi}_1$ and write $f(h)=(\overline{h}_1 ,\dots, \overline{h}_n )$ and $f(h')=(\overline{h}'_1 ,\dots, \overline{h}'_n )$ for some elements $\overline{h}_i,\overline{h}'_i \in \overline{H_i}$. We have $f(h)f(h')=(\overline{h}_1\overline{h}'_1 ,\dots,\overline{h}_n\overline{h}'_n )$. On the other hand, ${hh'=(\widetilde{h}_1\widetilde{h}'_1 ,\dots,\widetilde{h}_n\widetilde{h}'_n )}$ and
$$\tau(\widetilde{h}_1 \widetilde{h}_1' \dots \widetilde{h}_n \widetilde{h}_n') \equiv \tau((\widetilde{h}_1 \dots \widetilde{h}_n)( \widetilde{h}_1' \dots \widetilde{h}_n')) = (\overline{h}_1\dots \overline{h}_n) (\overline{h}_1' \dots \overline{h}_n') \equiv \overline{h}_1\overline{h}_1'\dots \overline{h}_n\overline{h}_n' \pmod{[\overline{G},\overline{G}]}.$$

\noindent Since $\widetilde{\lambda}(\widetilde{h}_i \widetilde{h}_i')=\overline{\lambda}(\overline{h}_i \overline{h}_i')$ for all $i=1,\dots,n$ and $(\overline{h}_1\dots \overline{h}_n) (\overline{h}_1' \dots \overline{h}_n') \in [\overline{G},\overline{G}]$, by the definition of $f$ it follows that $f(h h')= (\overline{h}_1\overline{h}_1',\dots, \overline{h}_n\overline{h}_n')=f(h)f( h')$.

\medskip

\textbf{Claim 3:} $f$ is surjective.

\textbf{Proof:} For $i=1,\dots,n$, let $\overline{h}_i \in \overline{H_i}$ be such that $\overline{h}_1 \dots \overline{h}_n \in [\overline{G},\overline{G}]$. Take any elements $\widetilde{h}_i \in \widetilde{H_i}$ satisfying $\widetilde{\lambda}(\widetilde{h}_i)=\overline{\lambda}(\overline{h}_i)$. As above, by Lemma \ref{lem:tau_1}\ref{prop_tau1} this implies that there exists $m \in \overline{M}$ such that
$$\tau(\widetilde{h}_1\dots \widetilde{h}_n)=\overline{h}_1 \dots \overline{h}_n m \in [\overline{G},\overline{G}].$$
\noindent Since $\overline{h}_1\dots \overline{h}_n \in [\overline{G},\overline{G}]$, we have $m \in \overline{M} \cap [\overline{G},\overline{G}]$. But $\overline{M} \cap [\overline{G},\overline{G}] = \tau(\widetilde{M} \cap [\widetilde{G},\widetilde{G}])$ by Lemma \ref{lem:tau_1}. Therefore $m = \tau(m')$ for some $m' \in \widetilde{M} \cap [\widetilde{G},\widetilde{G}]$ and thus $(\overline{h}_1,\dots,\overline{h}_n)=f(\widetilde{h}_1,\dots,\widetilde{h}_n m'^{-1})$.

\medskip

\textbf{Claim 4:} $f$ is an isomorphism.

\textbf{Proof:} We have seen that $f$ is surjective. Now we can analogously define a surjective map from $\ker \overline{\psi}_1 /  \overline{\varphi}_1(\ker \overline{\psi}_2)$ to $\ker \widetilde{\psi}_1 / \widetilde{\varphi}_1(\ker \widetilde{\psi}_2)  $. It follows that the finite groups $\ker \widetilde{\psi}_1 / \widetilde{\varphi}_1(\ker \widetilde{\psi}_2) $ and $\ker \overline{\psi}_1 /  \overline{\varphi}_1(\ker \overline{\psi}_2)$ have the same size and so $f$ is an isomorphism.\end{proof}   

Using this theorem, one can also obtain descriptions of the birational invariant $\operatorname{H}^1(K,\Pic \overline{X})$ and the defect of weak approximation $A(T)$ for the multinorm one torus $T$:

\begin{theorem}\label{thm:main_result}
Let $T$ be the multinorm one torus associated to $L$ and let $X$ be a smooth compactification of $T$. In the notation of diagram \eqref{diag:1stobs_defn_generalized}, we have
$$\Sha(T) \cong \ker \widetilde{\psi}_1 / \widetilde{\varphi}_1(\ker \widetilde{\psi}_2),$$
$$\operatorname{H}^1(K,\Pic \overline{X})\cong \ker \widetilde{\psi}_1 / \widetilde{\varphi}_1(\ker \widetilde{\psi}_2^{{nr}}),$$
$$A(T) \cong  \widetilde{\varphi}_1(\ker \widetilde{\psi}_2) / \widetilde{\varphi}_1(\ker \widetilde{\psi}_2^{{nr}}).$$
\end{theorem}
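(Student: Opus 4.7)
The first isomorphism is immediate. By the identification $\Sha(T) \cong \mathfrak{K}(L,K)$ recorded in the introduction (coming from the interpretation of $X_c$ as a principal homogeneous space under $T$) and Theorem \ref{thm:main_knot}, I obtain
$$\Sha(T) \cong \mathfrak{K}(L,K) \cong \ker \widetilde{\psi}_1 / \widetilde{\varphi}_1(\ker \widetilde{\psi}_2).$$

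\textbf{Second isomorphism.} This is the main new content. I would follow the same two-step pattern used in Sections \ref{sec:1st_obs}--\ref{sec:gen_gps} for the knot group. First I would introduce an \emph{unramified first obstruction} $\mathfrak{F}^{nr}(N,L,K)$ by replacing $K^* \cap N_{N/K}(\A^*_N)$ by the subgroup $K^* \cap N_{N/K}(\A^*_N)^{nr}$ of elements that are norms of idèles \emph{unramified at every place of $K$} ramified in $N/K$, and I would prove the analogue of Theorem \ref{thm:thm1DP_gen}, namely
$$\mathfrak{F}^{nr}(N,L,K) \cong \ker\psi_1 / \varphi_1(\ker \psi_2^{nr}).$$
The argument is essentially the one in Theorem \ref{thm:thm1DP_gen}: the restricted local Artin isomorphisms identify $\bigoplus_{v \text{ unram.}} \hat{\operatorname{H}}^{-2}(G_v,\Z)$ with its idèle counterpart, and the computation of the kernel goes through verbatim with $\A^*_N$ replaced by its unramified subgroup.

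Next I would combine this with the Colliot-Thélène--Sansuc interpretation of $\operatorname{H}^1(K,\Pic \overline{X})$. Specifically, for the extension $P/K$ of Proposition \ref{prop:1st=knot} (over which $T$ splits), the Tate--Poitou style description identifies $\operatorname{H}^1(K,\Pic \overline{X})^{\vee}$ with the subgroup of $\hat{\operatorname{H}}^{0}(G,C_N)$ consisting of classes that vanish at all unramified completions (rather than at all completions, which would give $\Sha(T)$). Chasing this through diagrams \eqref{diag:1stobs_defn3}--\eqref{diag:1stobs_defn4} yields
$$\operatorname{H}^1(K,\Pic \overline{X}) \cong \mathfrak{F}^{nr}(P,L,K) \cong \ker \overline{\psi}_1 / \overline{\varphi}_1(\ker \overline{\psi}_2^{nr}).$$
Finally I would upgrade this to \emph{any} generalized representation group by repeating the four-claim argument in the proof of Theorem \ref{thm:main_knot} verbatim: the isomorphism $\tau$ of Lemma \ref{lem:tau_1} and the well-definedness check go through with $\ker \psi_2^{v}$ replaced by $\ker \psi_2^{v}$ only for unramified $v$, because conditions \eqref{assumpt_v0}--\eqref{assumpt_v} were local at a single place and did not use ramification.

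\textbf{Third isomorphism.} Once the first two isomorphisms are established, the third one is essentially formal. Voskresenski's sequence \eqref{eq:Vosk} gives the exact sequence
$$0 \to A(T) \to \operatorname{H}^1(K,\Pic \overline{X})^{\vee} \to \Sha(T) \to 0,$$
and under the identifications above, the surjection on the right corresponds to the natural quotient map
$$\ker \widetilde{\psi}_1 / \widetilde{\varphi}_1(\ker \widetilde{\psi}_2^{nr}) \twoheadrightarrow \ker \widetilde{\psi}_1 / \widetilde{\varphi}_1(\ker \widetilde{\psi}_2),$$
whose kernel is $\widetilde{\varphi}_1(\ker \widetilde{\psi}_2) / \widetilde{\varphi}_1(\ker \widetilde{\psi}_2^{nr})$. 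Since all groups in sight are finite, taking Pontryagin duals does not change the isomorphism type, giving the stated formula for $A(T)$.

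\textbf{Main obstacle.} The delicate step is identifying $\operatorname{H}^1(K,\Pic \overline{X})$ with the unramified first obstruction $\mathfrak{F}^{nr}(P,L,K)$: one must verify that the Colliot-Thélène--Sansuc / Tate--Nakayama style description of $\operatorname{H}^1(K,\Pic \overline{X})$ matches exactly the subgroup of $\hat{\operatorname{H}}^{0}(G,C_N)$ detected by unramified completions, and that under Voskresenski's sequence this identification is compatible with the already-established description of $\Sha(T)$. The rest of the argument is a mechanical transposition of Sections \ref{sec:1st_obs} and \ref{sec:gen_gps} to the unramified setting.
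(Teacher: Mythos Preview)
Your plan matches the paper's almost exactly: the paper proves the first isomorphism as you do (by citing Theorem~\ref{thm:main_knot} and the identification $\Sha(T)\cong\mathfrak{K}(L,K)$) and then, for the remaining two, simply writes that they ``follow in the same way as in the Hasse norm principle case, see \cite[p.~32--33]{Drak}''. Your proposal is a faithful reconstruction of what that citation contains: pass to the tower $P/K$ of Proposition~\ref{prop:1st=knot}, interpret $\operatorname{H}^1(K,\Pic\overline{X})$ as an unramified variant of the first obstruction, and then transfer to an arbitrary generalized representation group by rerunning the four-claim argument of Theorem~\ref{thm:main_knot} restricted to unramified places.

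One point to watch in your third step. Voskresenski\u{\i}'s sequence \eqref{eq:Vosk} involves $\operatorname{H}^1(K,\Pic\overline{X})^{\vee}$, not $\operatorname{H}^1(K,\Pic\overline{X})$ itself. Your claim that the surjection $\operatorname{H}^1(K,\Pic\overline{X})^{\vee}\to\Sha(T)$ ``corresponds to the natural quotient map''
\[
\ker\widetilde\psi_1/\widetilde\varphi_1(\ker\widetilde\psi_2^{nr})\ \twoheadrightarrow\ \ker\widetilde\psi_1/\widetilde\varphi_1(\ker\widetilde\psi_2)
\]
presupposes a specific identification of the \emph{dual} with this group-theoretic object, and that does not follow from a non-canonical isomorphism $A\cong A^{\vee}$ for finite abelian $A$; in particular, from sizes alone you cannot read off the isomorphism type of $A(T)$. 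The cleanest repairs are either (i) to identify $A(T)$ directly from its idelic description, checking that the ramified local contributions match $\widetilde\varphi_1(\ker\widetilde\psi_2)/\widetilde\varphi_1(\ker\widetilde\psi_2^{nr})$ (this is what Drakokhrust does in the $n=1$ case), or (ii) to show that your identification of $\operatorname{H}^1(K,\Pic\overline{X})$ arises from a perfect pairing with the unramified first obstruction, so that dualizing genuinely yields the compatible surjection onto $\Sha(T)$. You have correctly located this compatibility as the main obstacle; once it is pinned down, the rest is indeed mechanical.
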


\begin{proof}
The first isomorphism is the statement of Theorem \ref{thm:main_knot} (recall that $\Sha(T)$ is canonically isomorphic to $\mathfrak{K}(L,K)$). The two remaining isomorphisms follow in the same way as in the Hasse norm principle case, see \cite[p. 32--33]{Drak}.
\end{proof}

\begin{remark}\label{rem:finite_time2}

As explained in Remark \ref{rem:finite_time}, all the groups $ \ker \widetilde{\psi}_1, \widetilde{\varphi}_1(\ker \widetilde{\psi}_2)$ and $\widetilde{\varphi}_1(\ker \widetilde{\psi}_2^{nr})$ in Theorem \ref{thm:main_result} can be computed in finite time. To this extent, we assembled a function in GAP \cite{gap} (whose code is available in \cite{macedo_code}) that, given the relevant local and global Galois groups, outputs the obstructions to the multinorm principle and weak approximation for the multinorm one torus of a finite number of extensions by means of Theorem \ref{thm:main_result}.

\end{remark}

We end this section by generalizing Corollary \ref{cor:square_free} and proving that, in many situations, one can actually circumvent the use of generalized representation groups when computing the obstructions to the local-global principles. 

Before we present this result, we need to introduce the notion of focal subgroups. For a moment, let $G$ be any finite group and let $H$ be a subgroup of $G$. The \textit{focal subgroup of $H$ in $G$} is defined as $\Phi^{{G}}({H})=\langle [h,x]  |  h \in {H} \cap x {H} x^{-1}, x \in {G} \rangle$. In \cite[Theorem 2]{DP}, it was proved that $${\varphi}_1(\ker {\psi}_2^{{nr}}) = \Phi^{G}(H) / [H,H]$$
\noindent in the setting of the first obstruction to the Hasse norm principle (case $n=1$). Returning to the multinorm context, this fact promptly implies that, in the notation of diagram \eqref{diag:1stobs_defn_generalized}, we have
\begin{equation}\label{eq:nr_hnp_inclusion}
    (1,\dots,\underbrace{\Phi^{\widetilde{G}}(\widetilde{H_i})}_{i\textrm{-th entry}}, 1,\dots, 1) \subset \widetilde{\varphi}_1(\ker \widetilde{\psi}_2^{nr}).
\end{equation}
\noindent for every $i=1,\dots,n$.

\begin{proposition}\label{prop:sq_free_mid_gp}

Suppose that there exists $j \in \{1,\dots,n\}$ such that, for every prime $p$ dividing $|\hat{\operatorname{H}}^{-3}(G,\Z)|$, $p^2$ does not divide $[L_j:K]$. Then, in the notation of diagram \eqref{diag:1stobs_defn}, we have
$$\Sha(T) \cong \ker {\psi}_1 / {\varphi}_1(\ker {\psi}_2),$$
$$\operatorname{H}^1(K,\Pic \overline{X})\cong \ker {\psi}_1 / {\varphi}_1(\ker {\psi}_2^{{nr}}),$$
$$A(T) \cong  {\varphi}_1(\ker {\psi}_2) / {\varphi}_1(\ker {\psi}_2^{{nr}}).$$
\end{proposition}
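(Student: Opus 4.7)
The plan is to establish the three isomorphisms via the natural projection $\widetilde{\lambda}: \widetilde{G} \to G$, showing it induces isomorphisms between the tilded and untilded versions of the three quotients in Theorem~\ref{thm:main_result}. The argument will proceed in three steps: first the $\Sha$ isomorphism via Proposition~\ref{prop:1stobs_cor}, then reduction to a key group-theoretic containment via a diagram chase, and finally the verification of that containment.

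For the $\Sha$ isomorphism, we plan to apply Proposition~\ref{prop:1stobs_cor} with $k_i = 0$ for $i \neq j$ and, for the remaining index $j$, letting $k_j$ equal the number of primes $p$ dividing $|\hat{\operatorname{H}}^{-3}(G,\Z)|$; for each such $p$, take $G_{j,p} = G_p$ (a Sylow $p$-subgroup of $G$) and $H_{j,p} = (H_j)_p$ (a Sylow $p$-subgroup of $H_j$ inside $G_p$). The hypothesis forces $[G_p : (H_j)_p]\in\{1,p\}$, so $L_{j,p}/K_{j,p}$ is trivial or (since index-$p$ subgroups of $p$-groups are normal) cyclic of prime degree $p$, in which case the HNP holds. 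The corestriction map $\bigoplus_p \Cor^{G}_{G_p}$ is surjective by the standard Sylow argument (the composition $\Res\circ\Cor$ is multiplication by an index coprime to $p$ on $p$-primary parts). Thus $\mathfrak{K}(L,K) = \mathfrak{F}(N,L,K)$, and combined with Theorems~\ref{thm:thm1DP_gen} and~\ref{thm:main_knot} we get $\Sha(T)\cong \ker\psi_1/\varphi_1(\ker\psi_2)$.

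The projection $\widetilde{\lambda}$ will then induce the commutative diagram of short exact sequences
\[
\begin{array}{ccccccccc}
0 & \to & A(T) & \to & \operatorname{H}^1(K,\Pic\overline{X}) & \to & \Sha(T) & \to & 0 \\
& & \downarrow\widetilde{\lambda}_* & & \downarrow\widetilde{\lambda}_* & & \downarrow\widetilde{\lambda}_* & & \\
0 & \to & \dfrac{\varphi_1(\ker\psi_2)}{\varphi_1(\ker\psi_2^{nr})} & \to & \dfrac{\ker\psi_1}{\varphi_1(\ker\psi_2^{nr})} & \to & \dfrac{\ker\psi_1}{\varphi_1(\ker\psi_2)} & \to & 0
\end{array}
\]
whose vertical arrows are surjective (using $\widetilde{M}\subseteq \widetilde{H_i}$). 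The rightmost vertical is an isomorphism by the previous step, so by the five lemma it suffices to prove that the $A(T)$-map is injective. After lifting $\gamma\in \ker\psi_2^{nr}$ to some $\widetilde{\gamma}\in \ker\widetilde{\psi}_2^{nr}$ (using the cyclicity of unramified decomposition groups and the Chebotarev-based choice of cyclic $\widetilde{S}_v$), the injectivity reduces to showing that tuples $(m_1,\ldots,m_n)$ with $m_i\in \widetilde{M}$ and $m_1\cdots m_n\in \widetilde{M}\cap [\widetilde{G},\widetilde{G}]$ lie in $\widetilde{\varphi}_1(\ker\widetilde{\psi}_2^{nr})$. By Lemma~\ref{lem:simpl_inters} we can consolidate the $m_i$ into the $j$-th slot, and \eqref{eq:nr_hnp_inclusion} then reduces the problem to the single containment
\[
\widetilde{M}\cap [\widetilde{G},\widetilde{G}]\subseteq \Phi^{\widetilde{G}}(\widetilde{H_j}).
\]

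To establish this containment, we decompose into $p$-primary parts. For each $p\mid |\hat{\operatorname{H}}^{-3}(G,\Z)|$, naturality of transgression together with the Sylow surjectivity of $\Cor:\operatorname{H}_2(G_p)\to \operatorname{H}_2(G)$ on $p$-primary parts will give $\widetilde{M}_{(p)}\cap [\widetilde{G},\widetilde{G}] = \widetilde{M}_{(p)}\cap [\widetilde{G_p},\widetilde{G_p}]$, so it suffices to show $[\widetilde{G_p},\widetilde{G_p}]\subseteq \Phi^{\widetilde{G}}(\widetilde{H_j})$ at each such $p$. If $(H_j)_p = G_p$, then $\widetilde{G_p}=\widetilde{\lambda}^{-1}(G_p)\subseteq \widetilde{H_j}$ and $[\widetilde{G_p},\widetilde{G_p}]\subseteq [\widetilde{H_j},\widetilde{H_j}]\subseteq \Phi^{\widetilde{G}}(\widetilde{H_j})$. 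Otherwise $[G_p:(H_j)_p] = p$, so $\widetilde{G_p}\cap \widetilde{H_j}$ is normal of index $p$ in $\widetilde{G_p}$; then a commutator computation in the $p$-group $\widetilde{G_p}$ yields $[\widetilde{G_p},\widetilde{G_p}] = [\widetilde{G_p}\cap \widetilde{H_j},\widetilde{G_p}] = \Phi^{\widetilde{G_p}}(\widetilde{G_p}\cap \widetilde{H_j})\subseteq \Phi^{\widetilde{G}}(\widetilde{H_j})$. The hardest part will be verifying the naturality of transgression cleanly, checking the commutator identities in the index-$p$ case, and confirming that the unramified lift of $\gamma$ in the reduction above always exists for our choice of $\widetilde{S}_v$; each piece is elementary, but the interplay between the central subgroup $\widetilde{M}$, the focal subgroup of $\widetilde{H_j}$ and the Sylow structure of $\widetilde{G}$ requires careful bookkeeping.
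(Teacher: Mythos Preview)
Your proposal is correct and ultimately rests on the same key group-theoretic containment as the paper, namely $\widetilde{M}\cap[\widetilde{G},\widetilde{G}]\subseteq\Phi^{\widetilde{G}}(\widetilde{H_j})$, followed by the same commutator calculation in the index-$p$ case. The organization differs in two respects. First, you establish the $\Sha$ isomorphism separately via Proposition~\ref{prop:1stobs_cor} and then invoke the five lemma, whereas the paper treats all three isomorphisms uniformly by passing to a Schur covering group (so that $\widetilde{M}\subset[\widetilde{G},\widetilde{G}]$) and directly showing that the projection $\rho$ is an isomorphism; your route is slightly longer but has the advantage of reusing an already-proved criterion and of not requiring the Schur covering hypothesis. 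Second, for the $p$-primary reduction $\widetilde{M}_{(p)}\cap[\widetilde{G},\widetilde{G}]\subset[\widetilde{G_p},\widetilde{G_p}]$ you appeal to naturality of transgression together with Sylow surjectivity of corestriction on $\operatorname{H}_2$, while the paper instead uses the elementary transfer-theoretic fact $\widetilde{G}_p\cap[\widetilde{G},\widetilde{G}]\cap Z(\widetilde{G})\subset[\widetilde{G}_p,\widetilde{G}_p]$ (Isaacs, Lemma~5.5) applied to a Sylow $p$-subgroup $\widetilde{G}_p$ of the Schur cover; the two inputs are essentially equivalent, but the paper's is more self-contained. Your lifting of $\gamma\in\ker\psi_2^{nr}$ to $\ker\widetilde{\psi}_2^{nr}$ does go through: at an unramified $v$ the obstruction lies in $\widetilde{S}_v\cap\widetilde{M}$, and since $\widetilde{M}$ is central and contained in every $\widetilde{H_i}$ you may absorb it into (say) $\widetilde{h}_{1,1}$ without leaving $\widetilde{H}_{1,w}$.
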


\begin{proof}
We prove only that $\operatorname{H}^1(K,\Pic \overline{X})\cong \ker {\psi}_1 / {\varphi}_1(\ker {\psi}_2^{{nr}})$ (the other two isomorphisms can be obtained by a similar argument). Assume, without loss of generality, that $j=1$ and $\widetilde{G}$ is a Schur covering group of $G$ so that $  \widetilde{M}$ is contained in $ [\widetilde{G},\widetilde{G}]$ and $\widetilde{M} \cong \hat{\operatorname{H}}^{-3}(G,\Z)$. We show that the map
\begin{align*}
  \rho \colon\ker \widetilde{\psi}_1 / \widetilde{\varphi}_1(\ker \widetilde{\psi}_2^{nr}) &\longrightarrow \ker {\psi}_1 / {\varphi}_1(\ker {\psi}_2^{nr})\\
  h=(\widetilde{h}_1 ,\dots,\widetilde{h}_n ) &\longmapsto (\widetilde{\lambda}(\widetilde{h}_1) ,\dots,\widetilde{\lambda}(\widetilde{h}_n))
\end{align*}

\noindent is an isomorphism, which proves the desired statement by Theorem \ref{thm:main_result}.

We first verify that $\rho$ is well defined. It is enough to check that $\rho(\widetilde{\varphi}_1(\ker \widetilde{\psi}_2^{v})) \subset {\varphi}_1(\ker {\psi}_2^{v})$ for an unramified place $v$ of $N/K$. Note that if $\widetilde{G}=\bigcup\limits_{k=1}^{r_{v,i}} \widetilde{H_i} \widetilde{x}_{i,k} \widetilde{S}_v$ is a double coset decomposition of $\widetilde{G}$, then ${G}=\bigcup\limits_{k=1}^{r_{v,i}} {H}_i {x}_{i,k} G_v$ is a double coset decomposition of ${G}$, where ${x}_{i,k}=\widetilde{\lambda}(\widetilde{x}_{i,k})$. From this observation, it is straightforward to verify that $\rho(\widetilde{\varphi}_1(\ker \widetilde{\psi}_2^{v})) \subset {\varphi}_1(\ker {\psi}_2^{v})$.

We now prove that $\rho$ is surjective. Suppose that we are given, for $i=1,\dots,n$, elements $h_i \in H_i$ such that $h_1 \dots h_n \in [G,G]$. Since $\widetilde{M} \subset [\widetilde{G},\widetilde{G}]$, any choice of elements $\widetilde{h}_i \in \widetilde{H_i}$ such that $\widetilde{\lambda}(\widetilde{h}_i)=h_i$ will satisfy $\widetilde{h}_1 \dots \widetilde{h}_n \in [\widetilde{G},\widetilde{G}]$ and thus $({h}_1, \dots, {h}_n)=\rho(\widetilde{h}_1 ,\dots ,\widetilde{h}_n)$.

We finally show that $\rho$ is injective. Suppose that $(h_1,\dots,h_n)=\rho(h) \in {\varphi}_1(\ker {\psi}_2^{v})$ for some unramified place $v$ of $N/K$. Write $h_i=\varphi_1(\bigoplus\limits_{k=1}^{r_{v,i}} h_{i,k})$ for some elements $h_{i,k} \in H_i \cap x_{i,k} G_v x_{i,k}^{-1}$. As $\rho(h) \in {\varphi}_1(\ker {\psi}_2^{v})$, we obtain $\prod\limits_{i=1}^{n}\prod\limits_{k=1}^{r_{v,i}} x_{i,k}^{-1} h_{i,k} x_{i,k}=1$. Picking elements $\widetilde{h}_{i,k}\in\widetilde{\lambda}^{-1}(h_{i,k})$ and $\widetilde{x}_{i,k}\in\widetilde{\lambda}^{-1}(x_{i,k})$ for all possible $i,k$, we obtain $\prod\limits_{i=1}^{n}\prod\limits_{k=1}^{r_{v,i}} \widetilde{x}_{i,k}^{-1} \widetilde{h}_{i,k} \widetilde{x}_{i,k}=m$ for some $m \in \widetilde{M} = \ker \widetilde{\lambda}$. As $m \in Z(\widetilde{G}) \cap \bigcap\limits_{i=1}^{n} \widetilde{H_i}$, we have $(\widetilde{h}_1 m^{-1},\widetilde{h}_2, ,\dots,\widetilde{h}_n) \in \widetilde{\varphi}_1(\ker \widetilde{\psi}_2^{nr})$. Therefore, in order to prove that $h \in \widetilde{\varphi}_1(\ker \widetilde{\psi}_2^{nr})$ it suffices to show that $(m^{-1},1,\dots,1) \in \widetilde{\varphi}_1(\ker \widetilde{\psi}_2^{nr})$. We prove that $m \in \Phi^{\widetilde{G}}(\widetilde{H_1})$, which completes the proof by \eqref{eq:nr_hnp_inclusion}.

\medskip
\textbf{Claim:} If $p^2$ does not divide $[L_1:K]$ for every prime $p$ dividing $|\widetilde{M}|$, then $\widetilde{M} \subset \Phi^{\widetilde{G}}(\widetilde{H_1})$.

\textbf{Proof:} We show that $\widetilde{M}_{(p)} \subset \Phi^{\widetilde{G}}(\widetilde{H_1})$. We have $[L_1:K]=[G:H_1]$ and therefore $[G_p:(H_1)_{p}] = [\widetilde{G}_p :(\widetilde{H_1})_{p}]=1$ or $p$. In any case, $(\widetilde{H_1})_p \trianglelefteq \widetilde{G}_p$ and we can write $\widetilde{G}_p = \langle x_p \rangle . (\widetilde{H_1})_p $ for some $x_p \in \widetilde{G}_p$. Since $\widetilde{M}_{(p)} \subset \widetilde{G}_p \cap [\widetilde{G},\widetilde{G}] \cap Z(\widetilde{G})$ and $\widetilde{G}_p \cap [\widetilde{G},\widetilde{G}] \cap Z(\widetilde{G}) \subset [\widetilde{G}_p,\widetilde{G}_p]$ (this last inclusion follows from properties of the transfer map, e.g. \cite[Lemma 5.5]{Isaacs}), we have $\widetilde{M}_{(p)}  \subset [\widetilde{G}_p ,\widetilde{G}_p ]$ and so it suffices to prove that $[\widetilde{G}_p ,\widetilde{G}_p ] \subset \Phi^{\widetilde{G}}(\widetilde{H_1})$. Let $z=[x_p^a h_1, x_p^b h_1']$ for some $a,b \in \Z$ and $h_1,h_1' \in (\widetilde{H_1})_p$. Using the commutator properties, we have $z=[x_p^a,h_1']^{h_1}[h_1,h_1'][h_1,x_p^b]^{h_1'}$. As $(\widetilde{H_1})_p \trianglelefteq \widetilde{G}_p $ and $\Phi^{\widetilde{G}}(\widetilde{H_1}) \trianglelefteq \widetilde{H_1}$, it follows that each one of the commutators above is in $\Phi^{\widetilde{G}}(\widetilde{H_1})$.\qedhere
\end{proof}

As a consequence we obtain the following result, which can be thought of as an analog of {\cite[Corollary 1]{DP}} for the birational invariant $\operatorname{H}^1(K,\Pic \overline{X})$.

\begin{corollary}
Let $L/K$ be an extension of global fields and suppose that $[L:K]$ is square-free. Let $X$ be a smooth compactification of the norm one torus $R^1_{L/K} \bbG_m$. Then $$\operatorname{H}^1(K,\Pic \overline{X}) \cong \frac{H \cap [G,G]}{\Phi^{G}(H)}.$$
\end{corollary}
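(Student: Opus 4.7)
The plan is to apply Proposition \ref{prop:sq_free_mid_gp} in the case $n=1$ with $L_1 = L$, so that $H_1 = H$. Since $[L:K]$ is square-free, the hypothesis that $p^2 \nmid [L:K]$ for any prime $p$ dividing $|\hat{\operatorname{H}}^{-3}(G,\Z)|$ is trivially satisfied. Hence Proposition \ref{prop:sq_free_mid_gp} yields
$$\operatorname{H}^1(K,\Pic \overline{X}) \cong \ker \psi_1 / \varphi_1(\ker \psi_2^{nr}),$$
where $\psi_1, \varphi_1, \psi_2^{nr}$ are the maps of diagram \eqref{diag:1stobs_defn} applied with $n=1$, i.e.\ without the use of a generalized representation group.

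Next I would identify the two groups on the right-hand side directly in terms of $H$ and $G$. For $n=1$, the map $\psi_1 \colon H^{\text{ab}} \to G^{\text{ab}}$ is simply the abelianization of the inclusion $H \hookrightarrow G$, so
$$\ker \psi_1 = \frac{H \cap [G,G]}{[H,H]}.$$
For the denominator, I would invoke \cite[Theorem 2]{DP} (as quoted just after Proposition \ref{prop:sq_free_mid_gp} in the paper), which asserts that in the Hasse norm principle setting $\varphi_1(\ker \psi_2^{nr}) = \Phi^{G}(H)/[H,H]$.

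Combining these two identifications gives
$$\operatorname{H}^1(K,\Pic \overline{X}) \cong \frac{(H \cap [G,G])/[H,H]}{\Phi^{G}(H)/[H,H]} \cong \frac{H \cap [G,G]}{\Phi^{G}(H)},$$
as desired. There is no genuine obstacle here; the statement is assembled from Proposition \ref{prop:sq_free_mid_gp} (which itself was the main work, allowing us to avoid passing to a generalized representation group when the degree has square-free part), together with the classical computation of $\ker \psi_1$ via the five-term/abelianization exact sequence and the focal-subgroup identification of $\varphi_1(\ker \psi_2^{nr})$ from \cite[Theorem 2]{DP}.
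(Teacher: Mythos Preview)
Your proof is correct and follows essentially the same approach as the paper's own proof: apply Proposition~\ref{prop:sq_free_mid_gp} with $n=1$ to get $\operatorname{H}^1(K,\Pic \overline{X}) \cong \ker \psi_1 / \varphi_1(\ker \psi_2^{nr})$, then identify the numerator as $(H\cap [G,G])/[H,H]$ directly and the denominator as $\Phi^G(H)/[H,H]$ via \cite[Theorem~2]{DP}. The paper compresses steps 3--5 into the single phrase ``the result then follows from \cite[Theorem~2]{DP},'' but the content is identical.
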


\begin{proof}
The conditions of Proposition \ref{prop:sq_free_mid_gp} are satisfied and hence $\operatorname{H}^1(K,\Pic \overline{X}) \cong  \ker {\psi}_1 / {\varphi}_1(\ker {\psi}_2^{nr})$. The result then follows from \cite[Theorem 2]{DP}.
\end{proof}

\section{Applications}\label{sec:applications}

In this section we employ the techniques developed so far in order to analyze the multinorm principle or weak approximation for the multinorm one torus in three different situations. Namely, we extend results of Bayer-Fluckiger--Lee--Parimala \cite{eva}, Demarche--Wei \cite{demarche} and Pollio \cite{pollio}. The notation used throughout this section is as in Sections \ref{sec:1st_obs} and \ref{sec:gen_gps}. Additionally, we will make use of the norm one torus $S=R^1_{F/K} \bbG_m$ of the extension $F=\bigcap\limits_{i=1}^{n} L_i$ and we let $Y$ denote a smooth compactification of $S$. We start by establishing a few auxiliary lemmas to be used in later applications.

\subsection{Preliminary results}

\begin{lemma}\label{lem:incl_unr}
In the notation of diagram \eqref{diag:1stobs_defn_generalized}, we have $$\widetilde{\varphi}_1(\ker \widetilde{\psi}_2^{nr}) \subseteq \{ (h_1 \widetilde{H_1}' , \dots , h_n \widetilde{H_n}') \in \ker \widetilde{\psi}_1  |  h_1\dots h_n \in \Phi^{\widetilde{G}}(\widetilde{H}) \}.$$
\end{lemma}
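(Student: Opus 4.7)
The plan is to reduce to the single-extension case of \cite[Theorem 2]{DP} applied to the tower $N/F/K$, where $F=\bigcap_{i}L_i$ so that $\widetilde{H}=\widetilde{\lambda}^{-1}(\Gal(N/F))$. Given $\alpha\in\ker\widetilde{\psi}_2^{nr}$ with coordinates $(\widetilde{h}_{i,v,k})$ (indexed by $i$, unramified $v$, and double-coset index $k$), I would write $\widetilde{\varphi}_1(\alpha)=(h_1,\dots,h_n)$ with lifts $\widetilde{h}_i\in\widetilde{H}_i$ satisfying $\widetilde{h}_i\equiv\prod_{v,k}\widetilde{h}_{i,v,k}\pmod{\widetilde{H}_i'}$. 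A short check shows that $\Phi^{\widetilde{G}}(\widetilde{H})$ is normal in $\widetilde{H}$ (conjugating a focal commutator $[h,g]$ by $h'\in\widetilde{H}$ gives $[h'hh'^{-1},h'gh'^{-1}]$, still of focal form) and contains $\widetilde{H}'\supseteq\widetilde{H}_i'$. The quotient $\widetilde{H}/\Phi^{\widetilde{G}}(\widetilde{H})$ is therefore abelian, so I may freely reorder the product $\widetilde{h}_1\cdots\widetilde{h}_n$ modulo $\Phi^{\widetilde{G}}(\widetilde{H})$; this reduces the problem to showing that, for each unramified $v$, the partial product $\beta_v:=\prod_{i,k}\widetilde{h}_{i,v,k}$ lies in $\Phi^{\widetilde{G}}(\widetilde{H})$.

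Fixing such a $v$, each representative $\widetilde{x}_{i,k}$ belongs to a unique $\widetilde{H}$-$\widetilde{S}_v$ double coset in $\widetilde{G}$, corresponding (via Lemma \ref{lem1DP} and projection by $\widetilde{\lambda}$) to a place $w'$ of $F$ above $v$; let $I_{w'}$ be the set of pairs $(i,k)$ mapping to $w'$. Choose a representative $\widetilde{x}_{w'}$ of each $w'$-double coset and write $\widetilde{x}_{i,k}=\widetilde{h}^{(0)}_{i,k}\widetilde{x}_{w'}s_{i,k}$ with $\widetilde{h}^{(0)}_{i,k}\in\widetilde{H}$ and $s_{i,k}\in\widetilde{S}_v$. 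I then define $y_{i,k}:=(\widetilde{h}^{(0)}_{i,k})^{-1}\widetilde{h}_{i,v,k}\widetilde{h}^{(0)}_{i,k}\in\widetilde{H}$. Using that $\widetilde{S}_v$ is abelian, a direct computation gives $\widetilde{x}_{w'}^{-1}y_{i,k}\widetilde{x}_{w'}=\widetilde{x}_{i,k}^{-1}\widetilde{h}_{i,v,k}\widetilde{x}_{i,k}\in\widetilde{S}_v$, so $y_{i,k}$ lies in $\widetilde{H}_{w'}^{F/K}:=\widetilde{H}\cap\widetilde{x}_{w'}\widetilde{S}_v\widetilde{x}_{w'}^{-1}$.

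Setting $\widetilde{h}_{w'}:=\prod_{(i,k)\in I_{w'}}y_{i,k}\in\widetilde{H}_{w'}^{F/K}$, the hypothesis $\alpha\in\ker\widetilde{\psi}_2^{nr}$ translates to $\sum_{w'\mid v}\widetilde{x}_{w'}^{-1}\widetilde{h}_{w'}\widetilde{x}_{w'}=\sum_{i,k}\widetilde{x}_{i,k}^{-1}\widetilde{h}_{i,v,k}\widetilde{x}_{i,k}=0$ in $\widetilde{S}_v$, so $(\widetilde{h}_{w'})_{w'\mid v}$ lies in the analogous unramified kernel for the tower $N/F/K$ with cover $\widetilde{G}$. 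Applying \cite[Theorem 2]{DP} to this tower yields $\prod_{w'}\widetilde{h}_{w'}\in\Phi^{\widetilde{G}}(\widetilde{H})/\widetilde{H}'$. Since each $y_{i,k}$ is $\widetilde{H}$-conjugate to $\widetilde{h}_{i,v,k}$, the two agree in $\widetilde{H}^{\textrm{ab}}$, so $\beta_v\equiv\prod_{w'}\widetilde{h}_{w'}\pmod{\widetilde{H}'}$; combined with $\widetilde{H}'\subseteq\Phi^{\widetilde{G}}(\widetilde{H})$, this forces $\beta_v\in\Phi^{\widetilde{G}}(\widetilde{H})$, as desired. The main obstacle will be the careful bookkeeping needed to collapse the fine multi-extension double coset decomposition to the coarser $F/K$ one while simultaneously preserving the $\widetilde{H}^{\textrm{ab}}$-class of $\widetilde{h}_{i,v,k}$ and the $\widetilde{\psi}_2$-constraint at $v$.
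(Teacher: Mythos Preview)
Your argument is correct, but it takes a genuinely different route from the paper. The paper fixes an unramified place $v$, writes $\widetilde{S}_v=\langle g\rangle$ and $\widetilde{h}_{i,k}=\widetilde{x}_{i,k}g^{e_{i,k}}\widetilde{x}_{i,k}^{-1}$, and then \emph{re-runs} the induction on $s=\sum_{i,k}r_{v,i}$ from the proof of \cite[Theorem~2]{DP} in the multi-extension setting: it manufactures an auxiliary element $\beta\in\ker\widetilde{\psi}_2^v$ with a zero in the last coordinate so as to reduce $s$ by one, invoking the $s=2$ case verbatim from \cite{DP}. Your approach instead collapses the fine $\widetilde{H}_i$--$\widetilde{S}_v$ double-coset structure to the coarser $\widetilde{H}$--$\widetilde{S}_v$ structure for the single tower $N/F/K$: you conjugate each $\widetilde{h}_{i,v,k}$ by an element of $\widetilde{H}$ to land it in the correct $\widetilde{H}_{w'}^{F/K}$ (crucially using that $\widetilde{S}_v$ is abelian at unramified places), check that the $\widetilde{\psi}_2$-constraint survives, and then quote \cite[Theorem~2]{DP} as a black box rather than reproving it. What you gain is conceptual clarity and a shorter argument once the collapsing is set up; what the paper's approach gains is self-containment, since it never appeals back to the single-extension result.

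One small point worth making explicit: \cite[Theorem~2]{DP} is literally stated for genuine Galois groups and decomposition groups, whereas you are applying it to $\widetilde{G}$, $\widetilde{H}$ and the cyclic $\widetilde{S}_v$. This is fine because the relevant implication (if $\prod_j y_j^{-1}h_jy_j=1$ with $h_j\in\widetilde{H}\cap y_j\widetilde{S}_vy_j^{-1}$ and $\widetilde{S}_v$ cyclic, then $\prod_j h_j\in\Phi^{\widetilde{G}}(\widetilde{H})$) is purely group-theoretic, but you should say so.
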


A proof of this lemma can be obtained by following the same strategy as in the proof of the analogous result for the Hasse norm principle (case $n=1$) in \cite[Theorem 2]{DP}. Nonetheless, as the details are slightly intricate, we include a proof here for the benefit of the reader.

\begin{proof}
Since $\widetilde{\varphi}_1(\ker \widetilde{\psi}_2^{nr}) = \prod\limits_{\substack{v \in \Omega_K \\ v \text{ unramified}}} \widetilde{\varphi}_1(\ker \widetilde{\psi}_2^{ v })$, it suffices to prove that
$$\widetilde{\varphi}_1(\ker \widetilde{\psi}_2^{v}) \subseteq \{ (h_1 \widetilde{H_1}' , \dots , h_n \widetilde{H_n}') | h_1\dots h_n \in \Phi^{\widetilde{G}}(\widetilde{H}) \} $$
\noindent for any unramified place $v$ of $N/K$. Let $\alpha \in \ker \widetilde{\psi}_2^{ v }$ and fix a double coset decomposition $\widetilde{G}=\bigcup\limits_{k=1}^{r_{v,i}} \widetilde{H_i} \widetilde{x}_{i,k} \widetilde{S}_v$. Write $\widetilde{S}_v=\langle g \rangle$ and $\alpha = \bigoplus\limits_{i=1}^{n} \bigoplus\limits_{k=1}^{r_{v,i}} \widetilde{h}_{i,k} $ for some $g \in \widetilde{G}$, $\widetilde{h}_{i,k} = \widetilde{x}_{i,k} g^{e_{i,k}} \widetilde{x}_{i,k}^{-1} \in \widetilde{H_i} \cap \widetilde{x}_{i,k} \langle g \rangle \widetilde{x}_{i,k}^{-1}$ and some $e_{i,k} \in \Z$. By hypothesis, we have
$1=\widetilde{\psi}_2(\alpha)=g^{\sum_{i,k} e_{i,k}}$ and therefore
$$\sum\limits_{i,k} e_{i,k} \equiv 0 \pmod{m},$$
\noindent where $m$ is the order of $g$. Since $g^m=1$, by changing some of the $e_{i,k}$ if necessary, we can (and do) assume that \begin{equation}\label{eq:sum=0}
    \sum\limits_{i,k} e_{i,k} = 0.
\end{equation}

Letting ${h_i} =\prod\limits_{k=1}^{r_{v,i}}  \widetilde{h}_{i,k} $ for any $1 \leq i \leq n$, we have $\widetilde{\varphi}_1(\alpha)=(h_1 \widetilde{H_1},\dots,h_n \widetilde{H_n}) \in \ker \widetilde{\psi}_1$. We prove that  
$$\prod\limits_{i=1}^{n} {h_i} =\prod\limits_{i=1}^{n}(\prod\limits_{k=1}^{r_{v,i}}  \widetilde{h}_{i,k})=\prod\limits_{i=1}^{n} (\prod\limits_{k=1}^{r_{v,i}} \widetilde{x}_{i,k} g^{e_{i,k}} \widetilde{x}_{i,k}^{-1}) \in \Phi^{\widetilde{G}}(\widetilde{H})$$
\noindent by induction on $s:=\sum\limits_{i=1}^{n} r_{v,i}$. The case $s=1$ is trivial and the case $s=2$ is solved in \cite[p. 308]{DP}. Now let $s > 2$ and set $d=\gcd(e_{i,k} | 1 \leq i \leq n, 1 \leq k \leq r_{v,i})$ and $f_{i,k}=\frac{e_{i,k}}{d}$. It follows that $\gcd(f_{i,k} |  1 \leq i \leq n, 1 \leq k \leq r_{v,i})=1$ and, since $\sum\limits_{i,k} f_{i,k} = 0$ by \eqref{eq:sum=0}, we have ${\gcd(f_{i,k} |  1 \leq i \leq n, 1 \leq k \leq r_{v,i} \textrm{ and } (i,k) \neq (n,r_{v,n}))}=1$. Hence there exist $a_{i,k} \in \Z$ such that $\sum\limits_{\substack{i,k\\(i,k) \neq (n,r_{v,n})}} f_{i,k} a_{i,k} = 1$. Consider the element
$$\beta=\Big(\bigoplus\limits_{\substack{i,k\\(i,k) \neq (n,r_{v,n})}} \widetilde{x}_{i,k} g^{e_{i,k}f_{n,r_{v,n}}a_{i,k}} \widetilde{x}_{i,k}^{-1}\Big) \oplus \widetilde{x}_{n,r_{v,n}} g^{-e_{n,r_{v,n}}} \widetilde{x}_{n,r_{v,n}}^{-1} \in \bigoplus\limits_{i=1}^{n}\Big( \bigoplus\limits_{k=1}^{r_{v,i}} {\widetilde{H}_{i,w} }\Big) .$$
\noindent Since 
$e_{i,k} f_{n,r_{v,n}}=e_{n,r_{v,n}} f_{i,k},$
\noindent we have
$$\widetilde{\psi}_2(\beta)=g^{\Big(\sum\limits_{\substack{i,k\\(i,k) \neq (n,r_{v,n})}} e_{i,k}f_{n,r_{v,n}}a_{i,k}\Big)-e_{n,r_{v,n}}}=g^{\Big(\sum\limits_{\substack{i,k\\(i,k) \neq (n,r_{v,n})}} e_{n,r_{v,n}}f_{i,k}a_{i,k}\Big)-e_{n,r_{v,n}}}=1$$
\noindent and so $\beta \in \ker \widetilde{\psi}_2^{v}$. 

Additionally, if $\widetilde{\varphi}_1(\beta) = (\widetilde{h}_1,\dots,\widetilde{h}_n)$, we have 

\begin{equation}
    \begin{split}
        \prod\limits_{i=1}^{n} \widetilde{h}_i & = \left(\prod\limits_{\substack{i,k\\(i,k) \neq (n,r_{v,n})}} \widetilde{x}_{i,k} g^{e_{i,k}f_{n,r_{v,n}}a_{i,k}} \widetilde{x}_{i,k}^{-1} \right)  \widetilde{x}_{n,r_{v,n}} g^{-e_{n,r_{v,n}}} \widetilde{x}_{n,r_{v,n}}^{-1} =  \\
        & = \left(\prod\limits_{\substack{i,k\\(i,k) \neq (n,r_{v,n})}} \widetilde{x}_{i,k} g^{e_{i,k}f_{n,r_{v,n}}a_{i,k}} \widetilde{x}_{i,k}^{-1} \right)  \widetilde{x}_{n,r_{v,n}} g^{-e_{n,r_{v,n}} \sum\limits_{\substack{i,k\\(i,k) \neq (n,r_{v,n})}} f_{i,k} a_{i,k}} \widetilde{x}_{n,r_{v,n}}^{-1} \equiv \\
        & \equiv \left(\prod\limits_{\substack{i,k\\(i,k) \neq (n,r_{v,n})}} \widetilde{x}_{i,k} g^{e_{i,k}f_{n,r_{v,n}}a_{i,k}} \widetilde{x}_{i,k}^{-1} \widetilde{x}_{n,r_{v,n}} g^{-e_{i,k}f_{n,r_{v,n}}a_{i,k}} \widetilde{x}_{n,r_{v,n}}^{-1} \right) \pmod{[\widetilde{H},\widetilde{H}]}
    \end{split}
\end{equation}
\noindent since the elements $\widetilde{x}_{i,k} g^{e_{i,k}} \widetilde{x}_{i,k}^{-1}$ (for all possible $i,k$) are in $\widetilde{H}$. Arguing similarly to the case $s=2$ (see \cite[p. 308]{DP}), we deduce that $\prod\limits_{i=1}^{n} \widetilde{h}_i \in \Phi^{\widetilde{G}}(\widetilde{H})$. Finally, consider the element $$\alpha'=\alpha \beta=\bigoplus\limits_{\substack{i,k\\(i,k) \neq (n,r_{v,n})}} \widetilde{x}_{i,k} g^{e_{i,k}(1+f_{n,r_{v,n}}a_{i,k})} \widetilde{x}_{i,k}^{-1} \in \bigoplus\limits_{i=1}^{n}\Big( \bigoplus\limits_{k=1}^{r_{v,i}} {\widetilde{H}_{i,w} }\Big).$$
\noindent It is clear that $\alpha' \in \ker \widetilde{\psi}_2^{v}$. By the induction hypothesis, if $\widetilde{\varphi}_1(\alpha')=(\widehat{h}_1,\dots,\widehat{h}_n)$ we have $\widehat{h}_1 \dots \widehat{h}_n \in \Phi^{\widetilde{G}}(\widetilde{H})$. Since $\widehat{h}_i \equiv h_i \widetilde{h}_i \pmod{[\widetilde{H},\widetilde{H}]}$ for all $i=1,\dots,n$, we conclude that ${h_1} \dots {h_n} \in \Phi^{\widetilde{G}}(\widetilde{H})$ as well.
\end{proof}

\begin{lemma}\label{lem:surject_int_Galois}
\begin{enumerate}[leftmargin=*,label=(\roman{*})]
    \item\label{lem:surject_int_Galois1} There exists a surjection $f:\operatorname{H}^1(K,\Pic \overline{X}) \xrightarrow{} \operatorname{H}^1(K,\Pic \overline{Y})$. If in addition 
$$\widetilde{\varphi}_1(\ker \widetilde{\psi}_2^{nr}) \supseteq \{ (h_1 \widetilde{H_1}' , \dots , h_n \widetilde{H_n}') | h_1\dots h_n \in \Phi^{\widetilde{G}}(\widetilde{H}) \}$$
\noindent (in the notation of diagram \eqref{diag:1stobs_defn_generalized}), then $f$ is an isomorphism. 
    \item\label{lem:surject_int_Galois2} If $F/K$ is Galois, $f$ induces a surjection $\Sha(T) \twoheadrightarrow \Sha(S)$.
\end{enumerate}

\end{lemma}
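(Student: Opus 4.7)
The plan is to use the Drakokhrust-style presentations from Theorem \ref{thm:main_result}. By that theorem (applied to $T$) combined with \cite[Theorem 2]{DP} (applied at the level of $\widetilde{G}$ for the single extension $F/K$), we have $\operatorname{H}^1(K,\Pic\overline{X}) \cong \ker\widetilde{\psi}_1/\widetilde{\varphi}_1(\ker\widetilde{\psi}_2^{nr})$ and $\operatorname{H}^1(K,\Pic\overline{Y}) \cong (\widetilde{H}\cap[\widetilde{G},\widetilde{G}])/\Phi^{\widetilde{G}}(\widetilde{H})$. I will define $f$ by the multiplication rule
\[
(h_1\widetilde{H_1}',\dots,h_n\widetilde{H_n}') \longmapsto h_1h_2\cdots h_n \cdot \Phi^{\widetilde{G}}(\widetilde{H}),
\]
which lands in the target because each $h_i \in \widetilde{H_i} \subseteq \widetilde{H}$ and $h_1\cdots h_n \in [\widetilde{G},\widetilde{G}]$ by the defining condition of $\ker\widetilde{\psi}_1$.

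For part \ref{lem:surject_int_Galois1}, well-definedness of $f$ modulo $\widetilde{\varphi}_1(\ker\widetilde{\psi}_2^{nr})$ is exactly the content of Lemma \ref{lem:incl_unr}. Surjectivity then follows because $\widetilde{H}=\langle\widetilde{H_1},\dots,\widetilde{H_n}\rangle$ and, modulo $[\widetilde{H},\widetilde{H}] \subseteq \Phi^{\widetilde{G}}(\widetilde{H})$, any word representing a class in $\widetilde{H}\cap[\widetilde{G},\widetilde{G}]$ can be rearranged into a product $h_1\cdots h_n$ with $h_i\in\widetilde{H_i}$. The additional hypothesis supplies the reverse of the inclusion in Lemma \ref{lem:incl_unr}, immediately yielding injectivity and hence promoting $f$ to an isomorphism.

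For part \ref{lem:surject_int_Galois2}, Theorem \ref{thm:main_result} also identifies $\Sha(T)\cong\ker\widetilde{\psi}_1/\widetilde{\varphi}_1(\ker\widetilde{\psi}_2)$ and $\Sha(S)\cong(\widetilde{H}\cap[\widetilde{G},\widetilde{G}])/\widetilde{\varphi}_1^S(\ker\widetilde{\psi}_2^{S})$. To show that $f$ descends to a map $\Sha(T)\to\Sha(S)$ it suffices to verify, for every place $v$, that the multiplication map sends $\widetilde{\varphi}_1(\ker\widetilde{\psi}_2^v)$ into $\widetilde{\varphi}_1^S(\ker\widetilde{\psi}_2^{S,v})$ modulo $[\widetilde{H},\widetilde{H}]$. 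Here the Galois hypothesis enters: since $\widetilde{H}\trianglelefteq\widetilde{G}$, the coarser double cosets $\widetilde{H}\backslash\widetilde{G}/\widetilde{G_v}$ are just left cosets $\widetilde{H}\widetilde{y}_l\widetilde{G_v}$. Given a representative $\bigoplus\widetilde{h}_{i,k}$ relative to the finer decompositions $\widetilde{G}=\bigcup_k\widetilde{H_i}\widetilde{x}_{i,k}\widetilde{G_v}$, I decompose each $\widetilde{x}_{i,k}=\widetilde{z}_{i,k}\widetilde{y}_{l(i,k)}\widetilde{s}_{i,k}$ with $\widetilde{z}_{i,k}\in\widetilde{H}$ and $\widetilde{s}_{i,k}\in\widetilde{G_v}$; normality forces $g_{i,k}:=\widetilde{s}_{i,k}\widetilde{x}_{i,k}^{-1}\widetilde{h}_{i,k}\widetilde{x}_{i,k}\widetilde{s}_{i,k}^{-1}$ into $\widetilde{H}\cap\widetilde{G_v}$, and after absorbing the inner conjugations by $\widetilde{z}_{i,k}\in\widetilde{H}$ modulo $[\widetilde{H},\widetilde{H}]$ the product $h_1\cdots h_n$ becomes the image under $\widetilde{\varphi}_1^S$ of the element $\bigoplus_l \widetilde{y}_l\bigl(\prod_{l(i,k)=l} g_{i,k}\bigr)\widetilde{y}_l^{-1}$. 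This element lies in $\ker\widetilde{\psi}_2^{S,v}$ because $\prod_{i,k} g_{i,k}\equiv\prod_{i,k}\widetilde{x}_{i,k}^{-1}\widetilde{h}_{i,k}\widetilde{x}_{i,k}\pmod{[\widetilde{G_v},\widetilde{G_v}]}$ and the right-hand side lies in $[\widetilde{G_v},\widetilde{G_v}]$ by hypothesis. Surjectivity of $\Sha(T)\twoheadrightarrow\Sha(S)$ then follows from the same rearrangement argument as in part \ref{lem:surject_int_Galois1}.

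The main obstacle is the bookkeeping in part \ref{lem:surject_int_Galois2}: one must refine the multinorm double coset decomposition to the single-extension one while simultaneously tracking the two levels of congruence -- modulo $[\widetilde{H},\widetilde{H}]$ upstairs in diagram \eqref{diag:1stobs_defn_generalized} and modulo $[\widetilde{G_v},\widetilde{G_v}]$ downstairs. The Galois assumption is invoked exactly once but crucially: without normality of $\widetilde{H}$, the elements $g_{i,k}$ need not land in $\widetilde{H}\cap\widetilde{G_v}$, so the would-be preimage in $\ker\widetilde{\psi}_2^{S,v}$ is not even an element of the right source group.
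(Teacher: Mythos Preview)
Your argument for part \ref{lem:surject_int_Galois1} matches the paper's exactly: both set up the analogue of diagram \eqref{diag:1stobs_defn_generalized} for $F/K$, invoke \cite[Theorem 2]{DP} to identify $\widehat{\varphi}_1(\ker\widehat{\psi}_2^{nr})=\Phi^{\widetilde{G}}(\widetilde{H})/[\widetilde{H},\widetilde{H}]$, and use Lemma \ref{lem:incl_unr} for well-definedness of the multiplication map.

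For part \ref{lem:surject_int_Galois2} you take a genuinely different route. The paper observes that once $\widetilde{H}\trianglelefteq\widetilde{G}$ one has $\Phi^{\widetilde{G}}(\widetilde{H})=[\widetilde{H},\widetilde{G}]$, so $\prod_i\widetilde{h}_i\equiv\widetilde{\psi}_2(\alpha)\pmod{\Phi^{\widetilde{G}}(\widetilde{H})}$; since $\widetilde{\psi}_2(\alpha)$ already lies in $\widetilde{H}\cap\widetilde{S}_v$, it is simply placed at the single place corresponding to the identity double coset. Your approach instead refines each $\widetilde{x}_{i,k}$ through the coarser decomposition and distributes the pieces $\widetilde{y}_l g_{i,k}\widetilde{y}_l^{-1}$ across all the places $w_l$. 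Both are valid; the paper's is shorter, while yours is more explicit about the double-coset bookkeeping.

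One point worth flagging: your diagnosis of where normality enters is off. You assert it is needed to force $g_{i,k}\in\widetilde{H}\cap\widetilde{S}_v$, but in fact $g_{i,k}\in\widetilde{S}_v$ is automatic (conjugation by $\widetilde{s}_{i,k}\in\widetilde{S}_v$ of an element of $\widetilde{S}_v$), and what you actually place at $w_l$ is $\widetilde{y}_l g_{i,k}\widetilde{y}_l^{-1}=\widetilde{z}_{i,k}^{-1}\widetilde{h}_{i,k}\widetilde{z}_{i,k}$, which lies in $\widetilde{H}$ simply because $\widetilde{z}_{i,k},\widetilde{h}_{i,k}\in\widetilde{H}$ --- no normality required. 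Tracing your computation through, none of the steps (membership in $\widetilde{H}_{w_l}$, the $\widehat{\psi}_2$-check modulo $[\widetilde{S}_v,\widetilde{S}_v]$, or the $\widehat{\varphi}_1$-computation modulo $[\widetilde{H},\widetilde{H}]$) invokes $\widetilde{H}\trianglelefteq\widetilde{G}$. So your argument appears to establish part \ref{lem:surject_int_Galois2} without the Galois hypothesis, whereas the paper's shortcut via $\Phi^{\widetilde{G}}(\widetilde{H})=[\widetilde{H},\widetilde{G}]$ genuinely needs it. (A minor notational slip: you write $\widetilde{G_v}$ throughout where diagram \eqref{diag:1stobs_defn_generalized} uses $\widetilde{S}_v$; these coincide at ramified places, and unramified places are already handled by Lemma \ref{lem:incl_unr}.)
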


\begin{proof}

Consider the analog of diagram \eqref{diag:1stobs_defn_generalized} for the extension $F/K$ (note that this is the fixed field of the group ${H}$ inside $N/K$):
\begin{equation}\label{diag:F/K_v0}
\xymatrix{
\widetilde{H}^{\textrm{ab}}  \ar[r]^{\widehat{\psi}_1} & \widetilde{G}^{\textrm{ab}}\\
\bigoplus\limits_{v \in \Omega_K} ( \bigoplus\limits_{w|v} {\widetilde{H}_{w}^{\textrm{ab}} })  \ar[r]^{\ \ \ \ \ \ \ \widehat{\psi}_2} \ar[u]^{\widehat{\varphi}_1}&\bigoplus\limits_{v \in \Omega_K}{\widetilde{S}_v^{\textrm{ab}} }\ar[u]_{\widehat{\varphi}_2}
}
\end{equation}

\noindent Here all the maps with the $\widehat{\phantom{a}}$ notation are defined as in diagram \eqref{diag:1stobs_defn_generalized} with respect to the extension $F/K$. Now define
\begin{align*}
  f \colon \ker \widetilde{\psi}_1 / \widetilde{\varphi}_1(\ker \widetilde{\psi}_2^{nr}) &\longrightarrow \ker \widehat{\psi}_1 / \widehat{\varphi}_1(\ker \widehat{\psi}_2^{nr}) \\
  (\widetilde{h}_1 \widetilde{H_1}',\dots,\widetilde{h}_n \widetilde{H_n}')\widetilde{\varphi}_1(\ker \widetilde{\psi}_2^{nr}) &\longmapsto (\widetilde{h}_1 \dots\widetilde{h}_n[\widetilde{H},\widetilde{H}]) \widehat{\varphi}_1(\ker \widehat{\psi}_2^{nr})
\end{align*}
\noindent Since $\widehat{\varphi}_1(\ker \widehat{\psi}_2^{nr})=\Phi^{\widetilde{G}}(\widetilde{H})/[\widetilde{H},\widetilde{H}]$ (see \cite[Theorem 2]{DP}), the map $f$ is well defined by Lemma \ref{lem:incl_unr}. Additionally, as the target group is abelian, it is easy to check that $f$ is a homomorphism and surjective. By Theorem \ref{thm:main_result} we have $\operatorname{H}^1(K,\Pic \overline{X}) \cong \ker \widetilde{\psi}_1 / \widetilde{\varphi}_1(\ker \widetilde{\psi}_2^{nr})$ and $\operatorname{H}^1(K,\Pic \overline{Y}) \cong \ker \widehat{\psi}_1 / \widehat{\varphi}_1(\ker \widehat{\psi}_2^{nr})$. The statement in the first sentence follows. Finally, if we assume $\widetilde{\varphi}_1(\ker \widetilde{\psi}_2^{nr}) \supseteq \{ (h_1 \widetilde{H_1}' , \dots , h_n \widetilde{H_n}') | h_1\dots h_n \in \Phi^{\widetilde{G}}(\widetilde{H}) \}$, then it is clear that $f$ is injective. 

We now prove \ref{lem:surject_int_Galois2}. By Theorem \ref{thm:main_result}, it is enough to show that $f( \widetilde{\varphi}_1(\ker \widetilde{\psi}_2)) \subset \widehat{\varphi}_1(\ker \widehat{\psi}_2)$. Since $\widetilde{\varphi}_1(\ker \widetilde{\psi}_2)=\prod\limits_{v \in \Omega_K} \widetilde{\varphi}_1(\ker \widetilde{\psi}_2^{v})$, it suffices to verify $f(\widetilde{\varphi}_1(\ker \widetilde{\psi}_2^{v}))\subset \widehat{\varphi}_1(\ker \widehat{\psi}_2)$ for all $v \in \Omega_K$. Let $\alpha \in \ker \widetilde{\psi}_2^{ v }$ and write $\alpha = \bigoplus\limits_{i=1}^{n} \bigoplus\limits_{k=1}^{r_{v,i}} \widetilde{h}_{i,k} $ for some  $\widetilde{h}_{i,k} \in \widetilde{H_i} \cap \widetilde{x}_{i,k} \widetilde{S}_v \widetilde{x}_{i,k}^{-1}$. Hence, we obtain $\widetilde{\varphi}_1(\alpha)=(\widetilde{h}_1,\dots,\widetilde{h}_n)$, where $\widetilde{h}_i=\prod\limits_{k=1}^{r_{v,i}} \widetilde{h}_{i,k}$, and we wish to show that $\prod\limits_{i=1}^{n} \widetilde{h}_i \in \widehat{\varphi}_1(\ker \widehat{\psi}_2)$. Since $F/K$ is Galois, $\widetilde{H}$ is a normal subgroup of $ \widetilde{G}$ and thus $\Phi^{\widetilde{G}}(\widetilde{H})=[\widetilde{H},\widetilde{G}]$. In this way, we have $$\prod\limits_{i=1}^{n} \widetilde{h}_i = \prod\limits_{i=1}^{n} \prod\limits_{k=1}^{r_{v,i}}  \widetilde{h}_{i,k}  \equiv \prod\limits_{i=1}^{n} \prod\limits_{k=1}^{r_{v,i}} \widetilde{x}_{i,k}^{-1} \widetilde{h}_{i,k} \widetilde{x}_{i,k} = \widetilde{\psi}_2(\alpha) \pmod{\Phi^{\widetilde{G}}(\widetilde{H})}.$$

\noindent As $\Phi^{\widetilde{G}}(\widetilde{H})/[\widetilde{H},\widetilde{H}]=\widehat{\varphi}_1(\ker \widehat{\psi}_2^{nr})$, it suffices to prove that $\widetilde{\psi}_2(\alpha) \in \widehat{\varphi}_1(\ker \widehat{\psi}_2^{v})$. For this, let $\widetilde{G}=\bigcup\limits_{j=1}^{r} \widetilde{H} \widetilde{y}_j \widetilde{S}_v$ be a double coset decomposition and suppose, without loss of generality, that $\widetilde{y}_{j_0}=1$ for some index $1 \leq j_0 \leq r$ corresponding to a place $w_0$ of $F$ via Lemma \ref{lem1DP}. Therefore, we obtain $\widetilde{\psi}_2(\alpha)=\prod\limits_{i=1}^{n} \prod\limits_{k=1}^{r_{v,i}} \widetilde{x}_{i,k}^{-1} \widetilde{h}_{i,k} \widetilde{x}_{i,k} \in \widetilde{H} \cap \widetilde{S}_v=\widetilde{H}_{w_0}$ since $\widetilde{x}_{i,k}^{-1} \widetilde{h}_{i,k} \widetilde{x}_{i,k} \in \widetilde{H}$ for all possible $i,k$. In this way, if $\beta \in \bigoplus\limits_{v \in \Omega_K} ( \bigoplus\limits_{w|v} {\widetilde{H}_{w}^{\textrm{ab}} }) $ is the vector with the $(v,w_0)$-th entry equal to $\widetilde{\psi}_2(\alpha)$ and all other entries equal to $1$, we have $\widehat{\psi}_2(\beta)=\widetilde{\psi}_2(\alpha) \in [\widetilde{S}_v,\widetilde{S}_v]$ (as $\alpha \in \ker \widetilde{\psi}_2^{ v }$) and so $\widetilde{\psi}_2(\alpha) =\widehat{\varphi}_1(\beta) \in \widehat{\varphi}_1(\ker \widehat{\psi}_2^{v}) $.
\end{proof}

\subsection{Multinorm principle for linearly disjoint extensions}

\hspace{1pt}

In this subsection we prove a theorem similar to the main result of \cite{demarche}, but with a slightly different hypothesis (and in some cases more general, see Remark \ref{rem:demarche_different} below).

\begin{theorem}\label{thm:demarch_wei_thm}
For any non-empty subset $I \subset \{1,\dots,n\}$, let $L_I \subseteq N$ be the compositum of the fields $L_i$ $(i \in I)$ and let $E_{I}$ be the Galois closure of $L_I/K$. Suppose that there exist indices $i_0,j_0 \in \{1,\dots,n\}$ such that, for every $1 \leq i \leq n$, there is a partition $I_i \sqcup J_i = \{1,\dots,n\}$ with $i_0 \in I_i,j_0 \in J_i$ and $E_{I_i} \cap E_{J_i} \subseteq L_i$. Then
$$ \operatorname{H}^1(K,\Pic \overline{X}) \cong \operatorname{H}^1(K,\Pic \overline{Y}). $$

\end{theorem}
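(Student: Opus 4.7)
By Lemma \ref{lem:surject_int_Galois}\ref{lem:surject_int_Galois1} we already have a canonical surjection $f\colon\operatorname{H}^1(K,\Pic \overline{X})\twoheadrightarrow\operatorname{H}^1(K,\Pic \overline{Y})$, and that lemma further guarantees that $f$ is an isomorphism provided the reverse of the inclusion in Lemma \ref{lem:incl_unr} holds, i.e.\ provided
$$\widetilde{\varphi}_1(\ker\widetilde{\psi}_2^{nr})\supseteq\bigl\{(h_1\widetilde{H_1}',\dots,h_n\widetilde{H_n}')\in\ker\widetilde{\psi}_1\;:\;h_1\cdots h_n\in\Phi^{\widetilde{G}}(\widetilde{H})\bigr\}.$$
The entire argument is therefore devoted to checking this containment under the partition hypothesis.

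My first move would be to translate the partition hypothesis group-theoretically. Setting $N_I:=\operatorname{Core}_{\widetilde{G}}\bigl(\bigcap_{j\in I}\widetilde{H_j}\bigr)=\Gal(N/E_I)$, the inclusion $E_{I_i}\cap E_{J_i}\subseteq L_i$ corresponds, via the Galois correspondence, to $\widetilde{H_i}\subseteq N_{I_i}\cdot N_{J_i}$. Since $i$ lies in exactly one of $I_i$ or $J_i$, a short argument then shows that every $h_i\in\widetilde{H_i}$ admits a factorisation $h_i=a_ib_i$ with $a_i\in N_{I_i}\cap\widetilde{H_i}$ and $b_i\in N_{J_i}\cap\widetilde{H_i}$. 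The conditions $i_0\in I_i$ and $j_0\in J_i$ then force $a_i\in\widetilde{H_i}\cap\widetilde{H_{i_0}}$ and $b_i\in\widetilde{H_i}\cap\widetilde{H_{j_0}}$; in particular $\widetilde{H}=\langle\widetilde{H_{i_0}},\widetilde{H_{j_0}}\rangle$.

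Working in the quotient $\ker\widetilde{\psi}_1/\widetilde{\varphi}_1(\ker\widetilde{\psi}_2^{nr})$, I would then exploit two elementary moves: first, Lemma \ref{lem:simpl_inters} tells us that for any $m\in\widetilde{H_i}\cap\widetilde{H_j}$ the vector with $m$ at position $i$ is equivalent to that with $m$ at position $j$; second, the $n=1$ formula $\varphi_1(\ker\psi_2^{nr})=\Phi^{\widetilde{G}}(\widetilde{H_i})/\widetilde{H_i}'$ from \cite[Theorem 2]{DP} shows that any element of $\Phi^{\widetilde{G}}(\widetilde{H_i})$ at position $i$ is trivial. Applying the first move to transport each $a_i$ from position $i$ to position $i_0$ and each $b_i$ from position $i$ to position $j_0$, the vector $(h_1,\dots,h_n)$ collapses, modulo the image, to one supported only at positions $i_0$ and $j_0$, of the form $(A\text{ at }i_0,\,B\text{ at }j_0)$ with $A\in\widetilde{H_{i_0}}$, $B\in\widetilde{H_{j_0}}$ and $AB\in\Phi^{\widetilde{G}}(\widetilde{H})$ in $\widetilde{G}^{\textrm{ab}}$.

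The concluding step is to show that such two-position vectors lie in $\widetilde{\varphi}_1(\ker\widetilde{\psi}_2^{nr})$, effectively the $n=2$ instance of the theorem for the pair $(L_{i_0},L_{j_0})$. I would invoke the hypothesis a second time, now at $i=i_0$ and $i=j_0$, to obtain factorisations $A=A_1A_2$ and $B=B_1B_2$ with factors in the normal subgroups $N_{I_{i_0}},N_{J_{i_0}},N_{I_{j_0}},N_{J_{j_0}}$ of $\widetilde{G}$, and iterate the first move to shuttle the mixed factors between positions $i_0$ and $j_0$; the residual vector should then become trivial via the second move together with a direct Chebotarev construction of an $\alpha\in\ker\widetilde{\psi}_2^{nr}$ obtained by choosing unramified places $v$ whose cyclic decomposition group $\widetilde{S}_v$ has a generator realising a commutator decomposition of $AB\in\Phi^{\widetilde{G}}(\widetilde{H})$. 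The main obstacle will be this very last step, i.e.\ upgrading the \emph{scalar} identity $AB\in\Phi^{\widetilde{G}}(\widetilde{H})$ to a vector-level identity in $\widetilde{H_{i_0}}^{\textrm{ab}}\oplus\widetilde{H_{j_0}}^{\textrm{ab}}$; here the full strength of the partition hypothesis (rather than a naive linear disjointness) is precisely what allows the factors to be consistently placed at the two distinguished positions.
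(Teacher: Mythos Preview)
Your overall strategy coincides with the paper's: reduce via Lemma~\ref{lem:surject_int_Galois}\ref{lem:surject_int_Galois1} to the reverse inclusion, translate the partition hypothesis into $\widetilde{H_i}\subseteq N_{I_i}N_{J_i}$, use Lemma~\ref{lem:simpl_inters} to transport each factor of $h_i$ to position $i_0$ or $j_0$, and thereby collapse an arbitrary $(h_1,\dots,h_n)$ to a two-position vector $(A,B)$ with $AB\in\Phi^{\widetilde G}(\widetilde H)$. Up to this point the arguments are essentially identical.

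The gap is in your final paragraph. You correctly flag as ``the main obstacle'' the upgrade of the scalar condition $AB\in\Phi^{\widetilde G}(\widetilde H)$ to a vector-level triviality, but your proposed fix (re-applying the hypothesis at $i=i_0,j_0$, shuttling, and then a ``direct Chebotarev construction'') does not resolve it. A Chebotarev place $v$ with $\widetilde S_v$ generated by some $h\in\widetilde H\cap x\widetilde H x^{-1}$ produces, via the $n=1$ argument of \cite[Theorem~2]{DP}, elements of $\widehat\varphi_1(\ker\widehat\psi_2^{nr})$ for the \emph{single} field $F$, or at best elements of $\Phi^{\widetilde G}(\widetilde{H_{i_0}})$ sitting at position $i_0$; it does not manufacture vectors in $\widetilde\varphi_1(\ker\widetilde\psi_2^{nr})$ supported at positions $i_0,j_0$ whose product realises an arbitrary element of $\Phi^{\widetilde G}(\widetilde H)$. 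The iterated shuttling you propose only moves elements of $\widetilde{H_{i_0}}\cap\widetilde{H_{j_0}}$ between the two slots and does not reduce the size of the focal condition.

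What the paper does instead is exactly the missing ingredient: it observes that the hypothesis forces $\widetilde H\subseteq\widetilde{B_{\{i_0\}}}\,\widetilde{B_{\{j_0\}}}$, a product of two \emph{normal} subgroups of $\widetilde G$ (these are the cores $N_{\{i_0\}}$, $N_{\{j_0\}}$ in your notation). Normality gives $\Phi^{\widetilde G}(\widetilde{B_{\{k\}}})=[\widetilde{B_{\{k\}}},\widetilde G]$ and hence
\[
\Phi^{\widetilde G}(\widetilde H)\ \subseteq\ \Phi^{\widetilde G}\bigl(\widetilde{B_{\{i_0\}}}\widetilde{B_{\{j_0\}}}\bigr)=\Phi^{\widetilde G}(\widetilde{B_{\{i_0\}}})\,\Phi^{\widetilde G}(\widetilde{B_{\{j_0\}}})\ \subseteq\ \Phi^{\widetilde G}(\widetilde{H_{i_0}})\,\Phi^{\widetilde G}(\widetilde{H_{j_0}}).
\]
Writing $AB=\phi_1\phi_2$ with $\phi_k\in\Phi^{\widetilde G}(\widetilde{H_k})$ and using \eqref{eq:nr_hnp_inclusion} to kill $\phi_k$ at position $k$, one reduces to $(A'',{A''}^{-1})$, which is in $\widetilde\varphi_1(\ker\widetilde\psi_2^{nr})$ by Lemma~\ref{lem:simpl_inters}. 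Once you insert this focal-subgroup factorisation in place of your last paragraph, your proof becomes the paper's.
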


\begin{proof}
If $n=1$ there is nothing to show, so assume $n \geq 2$. By Lemma \ref{lem:surject_int_Galois}\ref{lem:surject_int_Galois1} it suffices to prove that $$\widetilde{\varphi}_1(\ker \widetilde{\psi}_2^{nr}) \supseteq \{ (h_1 \widetilde{H_1}' , \dots , h_n \widetilde{H_n}') | h_1\dots h_n \in \Phi^{\widetilde{G}}(\widetilde{H}) \}.$$

\noindent Let $\alpha=(h_1 \widetilde{H_1}' , \dots , h_n \widetilde{H_n}')$ be such that $h_1 \dots h_n \in \Phi^{\widetilde{G}}(\widetilde{H})$. Renaming the fields $L_i$ if necessary, we assume that $i_0 =1$ and $j_0=2$. Denoting $B_{I_i}=\Gal(N/E_{I_i}), B_{J_i}=\Gal(N/E_{J_i})$ for all $1 \leq i \leq n$, the hypothesis $E_{I_i} \cap E_{J_i} \subseteq L_i$ is equivalent to $ B_{I_i}B_{J_i} \supseteq H_i$ and thus
\begin{equation}\label{eq:inc_hyp}
    \widetilde{H_i} \subseteq  \widetilde{B_{I_i}} \widetilde{B_{J_i}}
\end{equation}
\noindent with $1 \in I_i$, $2 \in J_i$ and $i \in I_i$ or $J_i$. If $n \geq 3$, this implies that for any $3 \leq i \leq n$ we can decompose $h_i = h_{1,i} h_{2,i}$ for some $h_{1,i} \in \widetilde{H_1} \cap \widetilde{H_i}$ and $h_{2,i} \in \widetilde{H_2} \cap \widetilde{H_i}$. Using Lemma \ref{lem:simpl_inters} as done in Claim 1 of the proof of Theorem \ref{thm:main_knot}, we obtain $$\alpha \equiv ((\prod\limits_{3 \leq i \leq n} h_{1,i}) h_1,(\prod\limits_{3 \leq i \leq n} h_{2,i}) h_2,1,\dots,1)$$
\noindent modulo $\widetilde{\varphi}_1(\ker \widetilde{\psi}_2^{nr})$. We can thus assume $\alpha$ to be of the form $(h_1',h_2',1,\dots,1)$ for some $h_1' \in \widetilde{H_1},h_2' \in \widetilde{H_2}$ such that $h_1' h_2' \in  \Phi^{\widetilde{G}}(\widetilde{H})$. Note that \eqref{eq:inc_hyp} implies that $\widetilde{H} =\langle  \widetilde{H_i} \rangle  \subset \widetilde{B_1} \widetilde{B_2}$, where $B_1=\Gal(N/E_{\{1\}})$ and $B_2=\Gal(N/E_{\{2\}})$. It thus follows that $\Phi^{\widetilde{G}}(\widetilde{H}) \subset \Phi^{\widetilde{G}}(\widetilde{B_1}\widetilde{B_2})=\Phi^{\widetilde{G}}(\widetilde{B_1})\Phi^{\widetilde{G}}(\widetilde{B_2}) $ and so $h_1' h_2' \in \Phi^{\widetilde{G}}(\widetilde{B_1}) \Phi^{\widetilde{G}}(\widetilde{B_2})$. Since $\Phi^{\widetilde{G}}(\widetilde{B_i}) \subset \Phi^{\widetilde{G}}(\widetilde{H_i})$ and recalling that $$(1,\dots,\underbrace{\Phi^{\widetilde{G}}(\widetilde{H_i})}_{i\textrm{-th entry}}, 1,\dots, 1) \subset \widetilde{\varphi}_1(\ker \widetilde{\psi}_2^{nr})$$
\noindent (see \eqref{eq:nr_hnp_inclusion} in Section \ref{sec:gen_gps}), we can multiply $h_1'$ and $h_2'$ by elements of $\widetilde{\varphi}_1(\ker \widetilde{\psi}_2^{nr})$ to attain $\alpha \equiv (h_1'',h_2'',1,\dots,1) \pmod{\widetilde{\varphi}_1(\ker \widetilde{\psi}_2^{nr})}$ for some $h_1'' \in \widetilde{H_1},h_2'' \in \widetilde{H_2}$ such that $h_1'' h_2''=1$. Thus $h_2''=h_1''^{-1}$ and $\alpha = (h_1'',h_1''^{-1},1,\dots,1)$, which by Lemma \ref{lem:simpl_inters} is in $ \widetilde{\varphi}_1(\ker \widetilde{\psi}_2^{nr})$, as desired. \end{proof}

\begin{remark}\label{rem:demarche_different}
It is easy to see that if there exists a partition $I \sqcup J=\{1,\dots,n\}$ such that $E_I \cap E_J=F$ (the assumption in \cite[Theorem 6]{demarche} when $F_i=E_I$ and $F_j=E_J$ for every $i \in I, j \in J$), the conditions of Theorem \ref{thm:demarch_wei_thm} are satisfied. Therefore, our theorem applies to all the cases described in \cite[Example 9(i)--(iii)]{demarche}. Moreover, our hypothesis applies for $n$-tuples of fields for which the assumptions in \cite[Theorem 6]{demarche} might fail. For example, let $L=(\Q(\sqrt{2},\sqrt{3}),\Q(\sqrt{2},\sqrt{5}),\Q(\sqrt{3},\sqrt{5}))$. It is easy to see that the assumptions of Theorem \ref{thm:demarch_wei_thm} are satisfied, but \cite[Theorem 6]{demarche} does not apply to this tuple of fields. Indeed, Demarche and Wei's hypothesis imply that there is a partition $I \sqcup J=\{1,\dots,n\}$ such that $L_I \cap L_J=F$, which does not exist in the example above. 
\end{remark}

As consequence of Theorem \ref{thm:demarch_wei_thm} we also obtain versions of \cite[Corollaries 7 and 8]{demarche}:

\begin{corollary}\label{cor:dem1}
Let $c\in K^*$. Assume the hypothesis of Theorem \ref{thm:demarch_wei_thm} and suppose that the $K$-variety $ N_{F/K}(\Xi)=c$ satisfies weak approximation. Then the multinorm equation $\prod\limits_{i=1}^{n} N_{L_i/K}(\Xi_i)=c$ satisfies weak approximation if and only if it has a $K$-point.
\end{corollary}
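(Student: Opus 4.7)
The plan is to reduce both weak approximation statements to assertions about the defects of weak approximation $A(T)$ and $A(S)$, and then combine Theorem \ref{thm:demarch_wei_thm} with Voskresenskii's exact sequence \eqref{eq:Vosk} and Lemma \ref{lem:surject_int_Galois}\ref{lem:surject_int_Galois2} to conclude.

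The direction ``weak approximation $\Rightarrow$ $K$-point'' is immediate in the non-degenerate case: if $X_c$ satisfies weak approximation and admits local points at every place, then $X_c(K)$ is dense in the nonempty product $\prod_v X_c(K_v)$, so $X_c(K) \neq \emptyset$. For the converse direction, the first step is to note that if $X_c(K) \neq \emptyset$ then, since $X_c$ is a $T$-torsor with a $K$-point, $X_c$ is $K$-isomorphic to $T$; hence weak approximation for $X_c$ is equivalent to $A(T) = 0$. Applying the same reduction to the $S$-torsor $\{N_{F/K}(\Xi) = c\}$ translates the hypothesis of the corollary into $A(S) = 0$, so it remains to show $A(T) = 0$.

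Next I would apply Voskresenskii's exact sequence \eqref{eq:Vosk} to $T$ and $S$ separately, obtaining
\[|A(T)| \cdot |\Sha(T)| = |\operatorname{H}^1(K,\Pic \overline{X})|, \qquad |A(S)| \cdot |\Sha(S)| = |\operatorname{H}^1(K,\Pic \overline{Y})|,\]
and then use Theorem \ref{thm:demarch_wei_thm} to identify the two rightmost quantities. Since $A(S) = 0$, this yields $|A(T)| \cdot |\Sha(T)| = |\Sha(S)|$. Invoking Lemma \ref{lem:surject_int_Galois}\ref{lem:surject_int_Galois2} then produces a surjection $\Sha(T) \twoheadrightarrow \Sha(S)$, which forces $|\Sha(T)| \geq |\Sha(S)|$. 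Combining both estimates gives $|A(T)| \leq 1$, hence $A(T) = 0$, as desired.

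The main obstacle is the application of Lemma \ref{lem:surject_int_Galois}\ref{lem:surject_int_Galois2}, which requires $F/K$ to be Galois. The hard part will be verifying that this is a consequence of the hypothesis of Theorem \ref{thm:demarch_wei_thm}. Examining the partition at $i = i_0$, one has $F \subseteq L_{i_0} \cap L_{j_0} \subseteq E_{I_{i_0}} \cap E_{J_{i_0}} \subseteq L_{i_0}$, and the plan is to use the analogous containments at the other indices $i$ to deduce the equality $F = E_{I_{i_0}} \cap E_{J_{i_0}}$. This would exhibit $F$ as an intersection of two Galois extensions of $K$, and hence as a Galois extension of $K$ itself, making Lemma \ref{lem:surject_int_Galois}\ref{lem:surject_int_Galois2} available and completing the argument.
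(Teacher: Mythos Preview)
Your overall strategy is exactly the natural one (and the paper states the corollary without proof, so there is nothing else to compare against): reduce to showing $A(T)=0$ given $A(S)=0$, combine Voskresenski\u{\i}'s sequence with Theorem~\ref{thm:demarch_wei_thm}, and use the surjection $\Sha(T)\twoheadrightarrow\Sha(S)$ from Lemma~\ref{lem:surject_int_Galois}\ref{lem:surject_int_Galois2}. The counting argument you give is correct.

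The genuine gap is in your verification that $F/K$ is Galois. Your proposed target $F=E_{I_{i_0}}\cap E_{J_{i_0}}$ is in general \emph{false}. For instance, take $L_1,L_2$ Galois over $K$ with $L_1\cap L_2=K$ and set $L_3=L_1L_2$; with $i_0=1$, $j_0=2$ and the partition $I_1=\{1\}$, $J_1=\{2,3\}$ one checks that the hypothesis of Theorem~\ref{thm:demarch_wei_thm} holds, yet $E_{I_1}\cap E_{J_1}=L_1\cap L_1L_2=L_1\neq K=F$. So ``the analogous containments at the other indices'' will not collapse $E_{I_{i_0}}\cap E_{J_{i_0}}$ down to $F$.

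The correct fix is already implicit in the proof of Theorem~\ref{thm:demarch_wei_thm}: since $i_0\in I_i$ and $j_0\in J_i$ for every $i$, one has $B_{I_i}\subseteq B_{\{i_0\}}$ and $B_{J_i}\subseteq B_{\{j_0\}}$, whence $H_i\subseteq B_{I_i}B_{J_i}\subseteq B_{\{i_0\}}B_{\{j_0\}}$ for all $i$. As $B_{\{i_0\}}$ and $B_{\{j_0\}}$ are normal in $G$ (they correspond to Galois closures), their product is a subgroup, so $H=\langle H_i\rangle\subseteq B_{\{i_0\}}B_{\{j_0\}}$. The reverse inclusion is immediate from $B_{\{i_0\}}\subseteq H_{i_0}$ and $B_{\{j_0\}}\subseteq H_{j_0}$. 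Thus $H=B_{\{i_0\}}B_{\{j_0\}}$ is normal in $G$, i.e.\ $F=E_{\{i_0\}}\cap E_{\{j_0\}}$ is Galois over $K$, and Lemma~\ref{lem:surject_int_Galois}\ref{lem:surject_int_Galois2} applies. With this correction your argument goes through.
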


\begin{corollary}\label{cor:dem2}
Assume the hypothesis of Theorem \ref{thm:demarch_wei_thm} and suppose that the Hasse principle and weak approximation hold for all norm equations $ N_{F/K}(\Xi)=c$, $c \in K^*$. Then the Hasse principle and weak approximation hold for all multinorm equations $\prod\limits_{i=1}^{n} N_{L_i/K}(\Xi_i)=c$.
\end{corollary}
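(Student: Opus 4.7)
The plan is to deduce the corollary by translating its two hypotheses into the vanishing $\operatorname{H}^1(K,\Pic \overline{Y})=0$, transferring this vanishing to $\operatorname{H}^1(K,\Pic \overline{X})$ via Theorem \ref{thm:demarch_wei_thm}, and then reading off both conclusions from the Voskresenski\u{\i} sequence \eqref{eq:Vosk} applied to $T$.

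First I would interpret the input cohomologically. Saying that the Hasse principle holds for every norm equation $N_{F/K}(\Xi)=c$ amounts to saying that every $S$-torsor with points in every completion of $K$ admits a $K$-point, i.e.\ $\Sha(S)=0$. Specialising to $c=1$, the variety $N_{F/K}(\Xi)=1$ is $S$ itself, so weak approximation for this particular variety gives $A(S)=0$. Applying \eqref{eq:Vosk} to $S$ then forces $\operatorname{H}^1(K,\Pic \overline{Y})^{\vee}=0$, and hence $\operatorname{H}^1(K,\Pic \overline{Y})=0$.

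Next I would invoke Theorem \ref{thm:demarch_wei_thm}, whose hypotheses are in force by assumption, to obtain the isomorphism $\operatorname{H}^1(K,\Pic \overline{X}) \cong \operatorname{H}^1(K,\Pic \overline{Y})$, so that $\operatorname{H}^1(K,\Pic \overline{X})=0$ as well. Plugging this into the Voskresenski\u{\i} sequence for $T$ yields simultaneously $\Sha(T)=0$ and $A(T)=0$.

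Finally, I would unwind these two vanishings. The identification $\Sha(T) \cong \mathfrak{K}(L,K)$ recalled in the introduction converts $\Sha(T)=0$ into the Hasse principle for every multinorm equation $\prod_{i=1}^{n} N_{L_i/K}(\Xi_i)=c$. For weak approximation on a fixed such $X_c$: if some $X_c(K_v)$ is empty, there is nothing to verify; otherwise the Hasse principle just established produces a $K$-point, giving a $K$-isomorphism $X_c \cong T$ of $T$-torsors, and weak approximation on $X_c$ is then precisely $A(T)=0$. The argument is thus entirely formal; all the genuine content is packed into Theorem \ref{thm:demarch_wei_thm}, and the only step demanding any vigilance is the dictionary between the geometric Hasse-principle and weak-approximation statements for the norm varieties over $F/K$ and the cohomological vanishings $\Sha(S)=A(S)=0$.
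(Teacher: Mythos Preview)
Your argument is correct and is exactly the deduction the paper has in mind: the corollary is stated without proof as an immediate consequence of Theorem \ref{thm:demarch_wei_thm} together with Voskresenski\u{\i}'s sequence \eqref{eq:Vosk}, and you have spelled out precisely that passage. There is nothing to add.
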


\subsection{Multinorm principle and weak approximation for abelian extensions}

\hspace{1pt}

In this subsection we generalize the main theorem of \cite{pollio} to $n$ abelian extensions under the conditions of Theorem \ref{thm:demarch_wei_thm}.

\begin{theorem}\label{thm:pollio}
Let $L=(L_1,\dots,L_n)$ be an $n$-tuple of abelian extensions of $K$ and suppose that the conditions of Theorem \ref{thm:demarch_wei_thm} are satisfied for $L$. Then
$$\Sha(T) \cong \Sha(S),$$ 
$$A(T) \cong A(S).$$
\end{theorem}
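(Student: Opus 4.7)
The plan is to combine Theorem \ref{thm:demarch_wei_thm} with the Drakokhrust descriptions of Theorem \ref{thm:main_result} via a snake lemma argument, reducing both claimed isomorphisms to a single assertion (injectivity of $\Sha(T) \to \Sha(S)$), and then to verify this injectivity using the abelianness of the $L_i/K$. First, since each $L_i/K$ is abelian, $F = \bigcap_i L_i$ is Galois over $K$ with abelian Galois group. In particular Lemma \ref{lem:surject_int_Galois}\ref{lem:surject_int_Galois2} supplies a surjection $\Sha(T) \twoheadrightarrow \Sha(S)$, while Theorem \ref{thm:demarch_wei_thm} supplies the isomorphism $\operatorname{H}^1(K,\Pic \overline{X}) \cong \operatorname{H}^1(K,\Pic \overline{Y})$. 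From Theorem \ref{thm:main_result} one extracts commutative short exact sequences
\begin{equation*}
0 \to A(T) \to \operatorname{H}^1(K,\Pic \overline{X}) \to \Sha(T) \to 0,
\end{equation*}
\begin{equation*}
0 \to A(S) \to \operatorname{H}^1(K,\Pic \overline{Y}) \to \Sha(S) \to 0,
\end{equation*}
with vertical maps all induced by $(\widetilde{h}_1\widetilde{H_1}',\dots,\widetilde{h}_n\widetilde{H_n}') \mapsto \widetilde{h}_1 \cdots \widetilde{h}_n [\widetilde{H},\widetilde{H}]$. Since the middle vertical is an isomorphism and the right vertical is a surjection, the snake lemma yields $A(T) \hookrightarrow A(S)$ injective and $\Coker(A(T) \to A(S)) \cong \ker(\Sha(T) \to \Sha(S))$, so both claimed isomorphisms follow once $\Sha(T) \to \Sha(S)$ is shown to be injective.

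For this, I would take $N = L_1 \cdots L_n$, which is abelian over $K$, and choose $\widetilde G$ to be a Schur covering group of $G = \Gal(N/K)$. Because $G$ is abelian, $\widetilde G' = \widetilde M$ is central and $\widetilde G$ is nilpotent of class at most $2$; moreover one has the factorization $\widetilde H = \widetilde H_1 \widetilde H_2 \cdots \widetilde H_n$. Given $(\widetilde{h}_1, \dots, \widetilde{h}_n) \in \ker \widetilde \psi_1$ whose product lies in $\widehat{\varphi}_1(\ker \widehat{\psi}_2)$, I plan to show $(\widetilde{h}_1, \dots, \widetilde{h}_n) \in \widetilde{\varphi}_1(\ker \widetilde{\psi}_2)$ by working place-by-place. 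For each $v$, one takes a witness $\beta = \sum_w \beta_w$ with $\widehat{\psi}_2(\beta) \in [\widetilde{S}_v,\widetilde{S}_v]$ and $\sum_w \beta_w \equiv \widetilde{h}_1\cdots\widetilde{h}_n \pmod{[\widetilde{H},\widetilde{H}]}$, splits each $\beta_w \in \widetilde{H}_w^{\textrm{ab}}$ into components in the $\widetilde{H}_{i,w_i}^{\textrm{ab}}$, and then adjusts the resulting tuple modulo $\widetilde{\varphi}_1(\ker \widetilde{\psi}_2^{nr})$. The necessary freedom for the adjustment is provided by the equality $\widetilde{\varphi}_1(\ker \widetilde{\psi}_2^{nr}) = \{(h_1,\dots,h_n) \in \ker\widetilde{\psi}_1 \mid h_1\cdots h_n \in \Phi^{\widetilde G}(\widetilde H)\}$ established in the proof of Theorem \ref{thm:demarch_wei_thm}.

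The main obstacle is executing this place-by-place splitting. It requires factoring each $\beta_w \in \widetilde{H}_w$ through $\widetilde H_1 \widetilde H_2 \cdots \widetilde H_n$ compatibly with the double coset decompositions $\widetilde G = \bigcup_k \widetilde H_i \widetilde x_{i,k} \widetilde S_v$, controlling the commutator corrections that arise by using the class-$2$ nilpotency of $\widetilde G$ (so they land in $\widetilde M \subseteq Z(\widetilde G)$), and absorbing the resulting central terms via the unramified correction provided by Theorem \ref{thm:demarch_wei_thm}.
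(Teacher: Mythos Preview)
Your snake-lemma reduction is correct and is essentially the dual of the paper's reduction: the paper reduces to showing that the map $f\colon A(T)\to A(S)$ is \emph{surjective} (injectivity being automatic from the middle isomorphism), which by your own snake-lemma analysis is equivalent to the injectivity of $\Sha(T)\to\Sha(S)$ that you target. So up to this point the two arguments agree.

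Where you diverge is in the execution of this last step, and here you are making life much harder than necessary. Your plan to split each $\beta_w\in\widetilde{H}_w$ into components lying in the various $\widetilde{H}_{i,w_i}$, track double-coset representatives, and absorb commutator corrections is delicate: the decomposition groups $\widetilde{H}_w$ for $F$ and $\widetilde{H}_{i,w_i}$ for the $L_i$ are indexed by \emph{different} double-coset decompositions, and there is no a priori reason an element of $\widetilde{H}_w$ factors through the $\widetilde{H}_{i,w_i}$ in a way compatible with $\widetilde{\psi}_2$. The paper bypasses all of this with a single observation exploiting abelianness more directly: since $G=\Gal(N/K)$ is abelian one has $[\widetilde{G},\widetilde{G}]\subseteq\widetilde{M}$, hence $[\widetilde{S}_v,\widetilde{S}_v]\subseteq\widetilde{M}\subseteq\widetilde{H}_1$ for every $v$. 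Thus, given $\alpha\in\widehat{\varphi}_1(\ker\widehat{\psi}_2^{v})$ with witness $\bigoplus_j\widetilde{h}_j$, the element $\beta:=\widehat{\psi}_2(\bigoplus_j\widetilde{h}_j)\in[\widetilde{S}_v,\widetilde{S}_v]$ already lies in $\widetilde{H}_1\cap\widetilde{S}_v$, so the tuple $(\beta,1,\dots,1)$ visibly belongs to $\widetilde{\varphi}_1(\ker\widetilde{\psi}_2^{v})$ and satisfies $f(\beta,1,\dots,1)=\beta\equiv\alpha\pmod{\Phi^{\widetilde{G}}(\widetilde{H})}$. This proves surjectivity of $A(T)\to A(S)$ (equivalently, your injectivity) with no splitting at all. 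The same trick would close your argument too: rather than decomposing $\beta_w$, simply place the obstruction $\beta\in[\widetilde{S}_v,\widetilde{S}_v]\subseteq\widetilde{H}_1$ in the first slot.
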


\begin{proof}
Note that if $A(T) \cong A(S)$, then by Theorem \ref{thm:demarch_wei_thm} and Voskresenski\u{\i}'s exact sequence \eqref{eq:Vosk} we deduce that $|\Sha(T)|=|\Sha(S)|$. Since $\Sha(T)$ surjects onto $\Sha(S)$ by Lemma \ref{lem:surject_int_Galois}\ref{lem:surject_int_Galois2}, we conclude that $\Sha(T) \cong \Sha(S)$. Therefore, it is enough to prove that $A(T) \cong A(S)$.

Let us again consider the analog of diagram \eqref{diag:1stobs_defn_generalized} for the extension $F/K$:
\begin{equation}\label{diag:F/K_v2}
\xymatrix{
\widetilde{H}^{\textrm{ab}}  \ar[r]^{\widehat{\psi}_1} & \widetilde{G}^{\textrm{ab}}\\
\bigoplus\limits_{v \in \Omega_K} ( \bigoplus\limits_{w|v} {\widetilde{H}_{w}^{\textrm{ab}} })  \ar[r]^{\ \ \ \ \ \ \ \widehat{\psi}_2} \ar[u]^{\widehat{\varphi}_1}&\bigoplus\limits_{v \in \Omega_K}{\widetilde{S}_v^{\textrm{ab}} }\ar[u]_{\widehat{\varphi}_2}
}
\end{equation}

\noindent As before, in this diagram all the maps with the $\widehat{\phantom{a}}$ superscript are defined as in diagram \eqref{diag:1stobs_defn_generalized} with respect to $F/K$. By Theorem \ref{thm:main_result}, we have $A(T) \cong \widetilde{\varphi}_1(\ker \widetilde{\psi}_2)/ \widetilde{\varphi}_1(\ker \widetilde{\psi}_2^{nr})$ (in the notation of diagram \eqref{diag:1stobs_defn_generalized}) and $A(S) \cong  \widehat{\varphi}_1(\ker \widehat{\psi}_2) / \widehat{\varphi}_1(\ker \widehat{\psi}_2^{nr})$ (in the notation of diagram \eqref{diag:F/K_v2}). Therefore it suffices to show that $ \widetilde{\varphi}_1(\ker \widetilde{\psi}_2) /  \widetilde{\varphi}_1(\ker \widetilde{\psi}_2^{nr})$ is isomorphic to $  \widehat{\varphi}_1(\ker \widehat{\psi}_2) / \widehat{\varphi}_1(\ker \widehat{\psi}_2^{nr})$. For this, we again consider the natural map
\begin{align*}
  f \colon\widetilde{\varphi}_1(\ker \widetilde{\psi}_2) /  \widetilde{\varphi}_1(\ker \widetilde{\psi}_2^{nr}) &\longrightarrow \widehat{\varphi}_1(\ker \widehat{\psi}_2) / \widehat{\varphi}_1(\ker \widehat{\psi}_2^{nr}) \\
  (\widetilde{h}_1 \widetilde{H_1}',\dots,\widetilde{h}_n \widetilde{H_n}')\widetilde{\varphi}_1(\ker \widetilde{\psi}_2^{nr}) &\longmapsto (\widetilde{h}_1 \dots \widetilde{h}_n [\widetilde{H},\widetilde{H}])\widehat{\varphi}_1(\ker \widehat{\psi}_2^{nr}) 
\end{align*}

\noindent In the proof of Lemma \ref{lem:surject_int_Galois}\ref{lem:surject_int_Galois2} it was shown that $f(\widetilde{\varphi}_1(\ker \widetilde{\psi}_2))\subset \widehat{\varphi}_1(\ker \widehat{\psi}_2)$. Additionally, recalling that $\widehat{\varphi}_1(\ker \widehat{\psi}_2^{nr}) =\Phi^{\widetilde{G}}(\widetilde{H})/[\widetilde{H},\widetilde{H}]$ by \cite[Theorem 2]{DP}, we have $f(\widetilde{\varphi}_1(\ker \widetilde{\psi}_2^{nr}))= \widehat{\varphi}_1(\ker \widehat{\psi}_2^{nr})$ by Lemma \ref{lem:incl_unr} and the proof of Theorem \ref{thm:demarch_wei_thm}. This shows that $f$ is well defined and injective. 

Finally, let us check that $f$ is surjective. Fix a place $v$ of $K$ and a double coset decomposition $\widetilde{G}=\bigcup\limits_{j=1}^{r}\widetilde{H} \widetilde{y}_j \widetilde{G}_v$ and let $\alpha \in \widehat{\varphi}_1(\ker \widehat{\psi}_2^{v})$. We can write $\alpha = \widehat{\varphi}_1(\bigoplus\limits_{j=1}^{r} \widetilde{h}_j)=\prod\limits_{j=1}^{r} \widetilde{h}_j$ for some $\widetilde{h}_j \in \widetilde{H} \cap \widetilde{y}_j \widetilde{S}_v \widetilde{y}_j^{-1}$ such that $\beta:=\widehat{\psi}_2(\bigoplus\limits_{j=1}^{r} \widetilde{h}_j)=\prod\limits_{j=1}^{r} \widetilde{y}_j^{-1} \widetilde{h}_j \widetilde{y}_j$ is in $[\widetilde{S}_v,\widetilde{S}_v]$. Note that as $G$ is abelian, we have $[\widetilde{G},\widetilde{G}] \subset \widetilde{M}$ and therefore $[\widetilde{S}_v,\widetilde{S}_v] \subset \widetilde{M} \subset \widetilde{H}_i$ for every $1 \leq i \leq n$. In particular, we have $\beta \in \widetilde{H}_1 \cap \widetilde{S}_v$ and from this one readily checks that the $n$-tuple $(\beta,1,\dots,1) $ is in $\widetilde{\varphi}_1(\ker \widetilde{\psi}_2^{v})$. Since $\widetilde{H} \trianglelefteq \widetilde{G}$, we have $\Phi^{\widetilde{G}}(\widetilde{H})=[\widetilde{H},\widetilde{G}]$ and thus $f(\beta,1,\dots,1)=\beta=\prod\limits_{j} \widetilde{y}_j^{-1} \widetilde{h}_j \widetilde{y}_j \equiv \prod\limits_{j}  \widetilde{h}_j = \alpha \pmod{\Phi^{\widetilde{G}}(\widetilde{H})}$. As $\Phi^{\widetilde{G}}(\widetilde{H})/[\widetilde{H},\widetilde{H}]= \widehat{\varphi}_1(\ker \widehat{\psi}_2^{nr})$, we obtain $\alpha = f(\beta,1,\dots,1)$ inside $\widehat{\varphi}_1(\ker \widehat{\psi}_2) / \widehat{\varphi}_1(\ker \widehat{\psi}_2^{nr})$.\end{proof}

\begin{remark}
Note that the conditions of Theorem \ref{thm:demarch_wei_thm} are always satisfied if $n=2$, so that Theorem \ref{thm:pollio} generalizes the main theorem of \cite{pollio}.
\end{remark}

\subsection{Weak approximation for cyclic extensions of prime degree}\label{sec:eva}

\hspace{1pt}

In this subsection we extend the result in \cite[Theorem 8.3]{eva} to include the weak approximation property for the multinorm one torus of $n$ cyclic extensions of prime degree $p$. 

\begin{theorem}\label{thm:eva}
Let $L_1,\dots,L_n$ be non-isomorphic cyclic extensions of $K$ with prime degree $p$. Then, we have
\[\operatorname{H}^1(K,\Pic \overline{X})=\begin{cases}
(\Z/p)^{n-2}  \textrm{, if $[L_1\dots L_n:K]=p^2$;}\\
0\textrm{, otherwise.}
\end{cases}\] 
\end{theorem}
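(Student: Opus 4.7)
The plan is to combine Theorem~\ref{thm:main_result} with Proposition~\ref{prop:sq_free_mid_gp} and then reduce the computation of $\operatorname{H}^1(K,\Pic\overline X)$ to a piece of linear algebra over $\mathbb{F}_p$. Since the $L_i$ are pairwise non-isomorphic cyclic extensions of $K$ of prime degree $p$, the Galois group $G = \Gal(L_1\cdots L_n/K)$ embeds in $(\Z/p)^n$ and is therefore elementary abelian; writing $p^k = [L_1\cdots L_n:K]$, I would identify $G$ with $\mathbb{F}_p^k$ and each $H_i = \Gal(N/L_i)$ with a hyperplane, the $H_i$ being pairwise distinct with $\bigcap_i H_i = 0$. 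The Schur multiplier $\hat{\operatorname{H}}^{-3}(G,\Z)$ is a $p$-group and $[L_i:K]=p$, so Proposition~\ref{prop:sq_free_mid_gp} yields $\operatorname{H}^1(K,\Pic\overline X) \cong \ker\psi_1/\varphi_1(\ker\psi_2^{nr})$ computed from the plain diagram \eqref{diag:1stobs_defn}.

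I would next describe the unramified contribution. Because $G$ is abelian, all conjugations in \eqref{diag:1stobs_defn} are trivial and $H_{i,w} = H_i\cap G_v$ for every place $w\mid v$. By Chebotarev, the $G_v$ for unramified $v$ sweep out every cyclic subgroup of $G$; when $G_v = C$ has order $p$, a direct inspection gives
\[\varphi_1(\ker\psi_2^v) = \{(h_i) : h_i\in H_i\cap C,\ \textstyle\sum_i h_i = 0\},\]
and since $C\cap H_i\in\{0,C\}$, these tuples decompose into two-support pieces. Hence $R := \varphi_1(\ker\psi_2^{nr})$ is the $\mathbb{F}_p$-span of the elementary relations $r_{ij}(m)$ with $m$ in position $i$ and $-m$ in position $j$, for all $i\neq j$ and $m\in H_i\cap H_j$.

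I would then split into cases on $k$. If $k\leq 1$, then $n=1$ and the theorem is trivial. If $k=2$, distinct hyperplanes in $\mathbb{F}_p^2$ intersect only in $0$, so $R=0$; since $\psi_1$ is surjective (the $H_i$ span $G$), $\dim_{\mathbb{F}_p}\ker\psi_1 = n(k-1)-k = n-2$, giving $\operatorname{H}^1(K,\Pic\overline X)\cong(\Z/p)^{n-2}$. For $k\geq 3$ the goal is $R=\ker\psi_1$. By duality (the annihilator of $\ker\psi_1$ in $\bigoplus_i H_i^*$ is precisely the image of $G^*\to\bigoplus_i H_i^*$, $\ell\mapsto(\ell|_{H_i})$), this reduces to showing that any tuple $(f_i)\in\bigoplus_i H_i^*$ with $f_i|_{H_i\cap H_j} = f_j|_{H_i\cap H_j}$ for all $i,j$ is of the form $f_i = \ell|_{H_i}$ for some $\ell\in G^*$.

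To establish this, fix $\ell_j\in G^*$ with $H_j=\ker\ell_j$, pick $\ell\in G^*$ extending $f_1$, and note that for each $i\geq 2$ the difference $f_i - \ell|_{H_i}$ vanishes on $H_1\cap H_i$ and hence equals $c_i\cdot\ell_1|_{H_i}$ for some $c_i\in\mathbb{F}_p$. The compatibility on $H_i\cap H_j$ forces $(c_i-c_j)\,\ell_1|_{H_i\cap H_j}=0$, so $c_i=c_j$ unless $\ell_1\in\langle\ell_i,\ell_j\rangle$. The main obstacle is the combinatorial step that collapses all $c_i$ to a common value: I would introduce the graph $\Gamma$ on $\{2,\ldots,n\}$ whose non-edges are the pairs with $\overline{\ell_i}$ proportional to $\overline{\ell_j}$ in $G^*/\langle\ell_1\rangle$, observe that these non-edges form an equivalence relation, and argue that the existence of only one class would squeeze all $\ell_i$ into a $2$-dimensional subspace of $G^*$, contradicting $k\geq 3$. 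Hence $\Gamma$ is complete multipartite on at least two parts and thus connected, so all $c_i$ equal a common $t$ and $\ell-t\ell_1$ works, yielding $\operatorname{H}^1(K,\Pic\overline X)=0$ as required.
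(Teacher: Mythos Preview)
Your argument is correct and takes a genuinely different route from the paper in the case $[L_1\cdots L_n:K]>p^2$. The paper handles that case by working inside a generalized representation group $\widetilde{G}$: it reorders the fields so that $L_1,\dots,L_{s-1}\subset L_1L_2$ and $L_s,\dots,L_n\not\subset L_1L_2$, proves the containments $\widetilde{H_i}\subset(\widetilde{H_1}\cap\widetilde{H_i})\widetilde{H_s}$ and $\widetilde{H_i}\subset(\widetilde{H_1}\cap\widetilde{H_i})\widetilde{H_2}$, and then invokes Lemma~\ref{lem:simpl_inters} together with Lemma~\ref{lem:surject_int_Galois}\ref{lem:surject_int_Galois1} to collapse any element of $\ker\widetilde\psi_1$ into $\widetilde\varphi_1(\ker\widetilde\psi_2^{nr})$. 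You instead apply Proposition~\ref{prop:sq_free_mid_gp} from the outset (legitimate, since $[L_i:K]=p$ and the Schur multiplier of $(\Z/p)^k$ is a $p$-group), work entirely in the untilded diagram, identify $\varphi_1(\ker\psi_2^{nr})$ with the span of the elementary relations $r_{ij}(m)$, and then settle $R=\ker\psi_1$ by a duality/gluing argument for linear functionals on hyperplanes, with the graph-connectedness step supplying the key combinatorics. Your approach is more elementary in that it bypasses Schur covers altogether and reduces everything to linear algebra over $\mathbb{F}_p$; the paper's approach has the advantage of reusing the structural machinery (Lemmas~\ref{lem:simpl_inters} and~\ref{lem:surject_int_Galois}) already developed. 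For the bicyclic case $[L_1\cdots L_n:K]=p^2$ the two proofs essentially coincide.

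One small slip: with your convention $f_i-\ell|_{H_i}=c_i\,\ell_1|_{H_i}$, the global functional that works is $\ell+t\ell_1$, not $\ell-t\ell_1$.
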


\begin{proof}
\noindent The case $n=1$ was proved in \cite[Proposition 9.1]{coll2} and for $n=2$ the result follows from Theorem \ref{thm:demarch_wei_thm}, so assume $n \geq 3$. 

Suppose first that $[L_1 \dots L_n : K] > p^2$. Reordering the fields $L_3,\dots,L_n$ if necessary, we can (and do) assume that each one of the fields $L_1,\dots,L_{s-1}$ is contained in $L_1 L_2$ (for some $3 \leq s\leq n$), while none of $L_s,\dots,L_n$ is contained in $L_1 L_2$. 
We prove two auxiliary claims:
\medskip

\textbf{Claim 1:} $\widetilde{H_i} \subset (\widetilde{H_1} \cap \widetilde{H_i} ).\widetilde{H_s}$ for any $i=1,\dots,s-1$.

\textbf{Proof:} Observe that $L_1L_i \cap L_s = K$ as otherwise we would have $L_s \subset L_1 L_i \subset L_1 L_2$, contradicting the assumption on $s$. Therefore $L_i \supset K= L_1 L_i \cap L_s$ and passing to subgroups this implies that ${H_i} \subset ({H_1} \cap {H_i}).{H_s}$, from which the claim follows.
\medskip

\textbf{Claim 2:} $\widetilde{H_i} \subset (\widetilde{H_1} \cap \widetilde{H_i}).\widetilde{H_2}$ for any $i=s,\dots,n$.

\textbf{Proof:} Observe that $L_2 \not\subset L_1 L_i$ as otherwise we would have $L_i \subset L_1 L_i = L_1 L_2$, contradicting the assumption on $L_i$. Therefore $L_i \supset K= L_1 L_i \cap L_2$ and passing to subgroups this implies that ${H_i} \subset ({H_1} \cap {H_i}).{H_2}$, from which the claim follows.

\medskip

Let us now prove that $\operatorname{H}^1(K,\Pic \overline{X})=0$. Since $\bigcap\limits_{i} L_i = K$, by Lemma \ref{lem:surject_int_Galois}\ref{lem:surject_int_Galois1} it suffices to show that $$\widetilde{\varphi}_1(\ker \widetilde{\psi}_2^{nr}) \supseteq \{ (h_1 \widetilde{H_1}' , \dots , h_n \widetilde{H_n}') | h_1\dots h_n \in \Phi^{\widetilde{G}}(\widetilde{H}) \}.$$

\noindent Let $\alpha=(h_1 \widetilde{H_1}' , \dots , h_n \widetilde{H_n}')$ be such that $h_1 \dots h_n \in \Phi^{\widetilde{G}}(\widetilde{H})$. By Claim 1 above, for $i=3,\dots,s-1$ we can write $h_i = h_{1,i}h_{s,i}$, where $h_{1,i} \in \widetilde{H_1} \cap \widetilde{H_i}$ and $h_{s,i} \in \widetilde{H_s} \cap \widetilde{H_i}$. Using this decomposition, we can apply Lemma \ref{lem:simpl_inters} as done in the proof of Theorem \ref{thm:demarch_wei_thm} in order to simplify $\alpha$ modulo $\widetilde{\varphi}_1(\ker \widetilde{\psi}_2^{nr})$ and assume it has the form $(h_1',h_2,1,\dots,1,h_s',h_{s+1}\dots,h_n)$ for some  $h_1' \in \widetilde{H_1},h_s' \in \widetilde{H_s}$. Using Claim 2 and Lemma \ref{lem:simpl_inters} in the same way, we further reduce $\alpha$ modulo $\widetilde{\varphi}_1(\ker \widetilde{\psi}_2^{nr})$ to a vector of the form $(h_1'',h_2',1,\dots,1)$ for some $h_1'' \in \widetilde{H_1},h_2' \in \widetilde{H_2}$ such that $h_1'' h_2' \in \Phi^{\widetilde{G}}(\widetilde{H})$. Finally, since $L_1 \cap L_2 = K$, we have $\widetilde{H}= \widetilde{H_1} \widetilde{H_2}$ and thus $\Phi^{\widetilde{G}}(\widetilde{H}) \subset \Phi^{\widetilde{G}}(\widetilde{H_1})\Phi^{\widetilde{G}}(\widetilde{H_2})$. The result follows by an argument similar to the one given at the end of the proof of Theorem \ref{thm:demarch_wei_thm}.

Now assume that $[L_1\dots L_n:K]=p^2$ (note that this is only possible if $ n \leq p+1$ as a bicyclic field has $p+1$ subfields of degree $p$) and therefore $G=C_p \times C_p$ is abelian. By Proposition \ref{prop:sq_free_mid_gp} it suffices to prove that $\ker {\psi}_1 / {\varphi}_1(\ker {\psi}_2^{nr}) \cong (\Z/p)^{n-2}$. We first show that ${\varphi}_1(\ker {\psi}_2^{nr})=1$. Let $\alpha \in \ker {\psi}_2^{v}$ for some unramified place $v$ of $N/K$. Write $G_v=\langle g \rangle$ and $\alpha = \bigoplus\limits_{i=1}^{n} \bigoplus\limits_{k=1}^{r_{v,i}} {h}_{i,k} $ for some $g \in G$ and ${h}_{i,k}  \in {H_i} \cap {x}_{i,k} \langle g \rangle {x}_{i,k}^{-1}={H_i} \cap \langle g \rangle$. If $g \not\in H_i $ for all $i=1,\dots,n$, then $\alpha$ is the trivial vector and $\varphi_1(\alpha)=(1,\dots,1)$. Otherwise, if $g \in H_{i_0}\cong C_p$ for some index $i_0$, then $g \not\in H_i$ for all $i \neq i_0$ and thus $h_{i,k}=1$ for $i \neq i_0$. In this way, it follows that $1=\psi_2(\alpha)=\prod\limits_{i=1}^{n} \prod\limits_{k=1}^{r_{v,i}} {x}_{i,k}^{-1} {h}_{i,k}{x}_{i,k}= \prod\limits_{k=1}^{r_{v,i_0}}  {h}_{i_0,k}$. Therefore, if $\varphi_1(\alpha)=(h_1,\dots,h_n)$, we have $h_i = 1$ if $i \neq i_0$ and $h_{i_0}=\prod\limits_{k=1}^{r_{v,i_0}}  {h}_{i_0,k} = 1$. In conclusion, $\varphi_1(\alpha)=(1,\dots,1)$.

On the other hand, we have $\ker \psi_1 = \{(h_1,\dots,h_n) | h_i \in H_i, \prod\limits_{i=1}^{n} h_i = 1\}$. This group is the kernel of the surjective group homomorphism 
\begin{align*}
  f \colon H_1 \times \dots \times H_n &\longrightarrow G \\
  (h_1 ,\dots,h_n ) &\longmapsto h_1 \dots h_n 
\end{align*}

\noindent and thus $\ker \psi_1 = \ker f \cong (\Z/p)^{n-2}$, as desired.\end{proof}

\begin{corollary}\label{cor:eva}
Let $L=(L_1,\dots,L_n)$ be an $n$-tuple of non-isomorphic cyclic extensions of $K$ with prime degree $p$. 
\begin{enumerate}
    \item\label{cor410_case1} If $[L_1\dots L_n:K]=p^2$, then weak approximation for the multinorm one torus $T$ holds if and only if the multinorm principle for $L$ fails.
    \item Otherwise, both the multinorm principle for $L$ and weak approximation for $T$ hold.
\end{enumerate}
 
\end{corollary}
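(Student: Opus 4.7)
The plan is to combine Theorem \ref{thm:eva} with Voskresenski\u{\i}'s exact sequence \eqref{eq:Vosk}, recalling that $\Sha(T)\cong\mathfrak{K}(L,K)$ detects the failure of the multinorm principle while $A(T)$ measures the defect of weak approximation. For the ``otherwise'' case (2), Theorem \ref{thm:eva} gives $\operatorname{H}^1(K,\Pic\overline{X})=0$, and \eqref{eq:Vosk} immediately forces both $A(T)=0$ and $\Sha(T)=0$, so both local--global principles hold.

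For case (1), set $G=\Gal(N/K)\cong (\Z/p)^2$ with $N=L_1\cdots L_n$ and observe that the hypotheses of Proposition \ref{prop:sq_free_mid_gp} are met: the Schur multiplier $\hat{\operatorname{H}}^{-3}(G,\Z)$ of $(\Z/p)^2$ is a $p$-group and $p^2\nmid[L_j:K]=p$ for every $j$. Consequently I may use the formulas $\Sha(T)\cong\ker\psi_1/\varphi_1(\ker\psi_2)$ and $A(T)\cong\varphi_1(\ker\psi_2)/\varphi_1(\ker\psi_2^{nr})$ coming from the ungeneralized diagram \eqref{diag:1stobs_defn}. The computation in the proof of Theorem \ref{thm:eva} already supplies $\ker\psi_1\cong(\Z/p)^{n-2}$ and $\varphi_1(\ker\psi_2^{nr})=0$, so everything reduces to pinning down $\varphi_1(\ker\psi_2)$.

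The key step is a local dichotomy: $\varphi_1(\ker\psi_2)$ equals either $0$ or all of $\ker\psi_1$. A careful reading of the calculation of $\varphi_1(\ker\psi_2^v)=0$ in the proof of Theorem \ref{thm:eva} shows that it uses only the cyclicity of $G_v$, not the fact that $v$ is unramified; hence $\varphi_1(\ker\psi_2^v)=0$ for every place whose decomposition group is a proper (and therefore cyclic) subgroup of $G=(\Z/p)^2$. The only remaining possibility is $G_v=G$, i.e.\ $v$ totally ramified in $N/K$; in that case each double coset decomposition $G=\bigcup_k H_ix_{i,k}G_v$ collapses ($r_{v,i}=1$, $x_{i,1}=1$, $H_{i,w}=H_i$), so $\psi_2^v$ is identified with $\psi_1$ and therefore $\varphi_1(\ker\psi_2^v)=\ker\psi_1$. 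Thus $\varphi_1(\ker\psi_2)=0$ if all $G_v$ are proper in $G$, and $\varphi_1(\ker\psi_2)=\ker\psi_1$ otherwise.

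Feeding the dichotomy back into the formulas, the totally ramified alternative gives $\Sha(T)=0$ and $A(T)\cong(\Z/p)^{n-2}$ (multinorm principle holds, weak approximation fails when $n\geq 3$), while the purely cyclic alternative gives $\Sha(T)\cong(\Z/p)^{n-2}$ and $A(T)=0$ (multinorm principle fails, weak approximation holds when $n\geq 3$). Since exactly one of the two alternatives occurs, this yields the biconditional of case (1); the degenerate case $n=2$ is consistent because $(\Z/p)^{n-2}=0$ forces both invariants to vanish. The main obstacle is essentially bookkeeping: verifying that the cyclic-$G_v$ argument from the proof of Theorem \ref{thm:eva} extends verbatim to ramified places, and checking that the totally ramified case really saturates $\varphi_1(\ker\psi_2)$ up to all of $\ker\psi_1$.
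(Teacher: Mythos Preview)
Your argument for case~(2) is identical to the paper's. For case~(1), however, you take a genuinely different and more self-contained route. The paper's one-line proof simply invokes Voskresenski\u{\i}'s sequence~\eqref{eq:Vosk}, Theorem~\ref{thm:eva}, and the external result \cite[Theorem~8.3]{eva}, the last of these supplying the characterisation of when the multinorm principle fails. You instead bypass the citation to~\cite{eva} entirely: after observing via Proposition~\ref{prop:sq_free_mid_gp} that the ungeneralised diagram~\eqref{diag:1stobs_defn} suffices, you compute $\varphi_1(\ker\psi_2)$ directly through the local dichotomy $\varphi_1(\ker\psi_2^v)=0$ for $G_v$ proper (hence cyclic) and $\varphi_1(\ker\psi_2^v)=\ker\psi_1$ for $G_v=G$. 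Both branches are correct---the cyclic-$G_v$ calculation in the proof of Theorem~\ref{thm:eva} indeed uses only that $G_v=\langle g\rangle$ and that $G$ is abelian, not that $v$ is unramified; and when $G_v=G$ the single double coset forces $\psi_2^v$ to coincide literally with $\psi_1$. Your approach thus recovers internally the criterion (multinorm principle fails iff every $G_v$ is cyclic) that the paper only records afterwards in Remark~\ref{rem:eva} via \cite[Proposition~8.5]{eva}, making the corollary independent of that external reference. The trade-off is length: the paper gets a one-sentence proof by outsourcing the arithmetic input, while you reprove that input from the paper's own machinery.

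One small caveat: your closing remark that $n=2$ is ``consistent'' is not quite right. When $n=2$ both $\Sha(T)$ and $A(T)$ vanish, so weak approximation holds \emph{and} the multinorm principle holds; the biconditional in~(1) is then ``true~$\Leftrightarrow$~false''. This is a wrinkle in the corollary's phrasing rather than in your method (for $n\geq 3$ your dichotomy proves exactly what is claimed), and the paper's proof does not address it either.
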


\begin{proof}
Follows from Voskresenski\u{\i}'s exact sequence \eqref{eq:Vosk}, Theorem \ref{thm:eva} and \cite[Theorem 8.3]{eva}.
\end{proof}

\begin{remark}\label{rem:eva}
In \cite[Proposition 8.5]{eva} it is shown that, in the case \eqref{cor410_case1} above, the multinorm principle for $L$ fails if and only if all decomposition groups of the bicyclic extension $L_1 \dots L_n$ are cyclic. We thus have a simple criterion to test the validity of weak approximation for the associated multinorm one torus.
\end{remark}

\end{document}